\pgfplotsset{compat=1.18}
\newcommand{\set}[1]{\left\{#1\right\}}
\newcommand{\PP}{\mathbb{P}}
\numberwithin{equation}{section}
\newtheorem{theorem}{Theorem}[section]
\newtheorem{lemma}[theorem]{Lemma}
\newtheorem{proposition}[theorem]{Proposition}
\newtheorem{corollary}[theorem]{Corollary}
\theoremstyle{definition}
\newtheorem{definition}[theorem]{Definition}
\newtheorem{assumption}[theorem]{Assumption}
\newtheorem{remark}[theorem]{Remark}
\newtheoremstyle{named}{}{}{\itshape}{}{\bfseries}{.}{.5em}{\thmnote{#3}}
\theoremstyle{named}
\newcommand{\R}{\ensuremath{\mathbb{R}}}
\newcommand{\RR}{\ensuremath{\mathbb{R}}}
\newcommand{\EE}{\ensuremath{\mathbb{E}}}
\newcommand{\Var}{\operatorname{Var}}
\newcommand{\comp}{\textup{Comp}}
\newcommand{\incomp}{\textup{Incomp}}
\newcommand{\LCD}{\operatorname{LCD}}
\newcommand{\ceil}[1]{\lceil #1 \rceil}
        \begingroup\color{blue}\textbf{Details:} \BODY \textbf{ End of details.}\endgroup %
\title[Gaps between Singular Values]{Gaps between Singular Values of Sample Covariance Matrices}
\author[N. J. Christoffersen]{Nicholas J. Christoffersen}
\address{Department of Mathematics\\ University of Colorado\\ Campus Box 395\\ Boulder, CO 80309-0395\\USA}
\email{nicholas.christoffersen@colorado.edu}
\author[K. Luh]{Kyle Luh}
\address{Department of Mathematics\\ University of Colorado\\ Campus Box 395\\ Boulder, CO 80309-0395\\USA}
\email{kyle.luh@colorado.edu}
\thanks{K. Luh has been partially supported by Simons Grant MP-TSM-00001988.}
\author[S. O'Rourke]{Sean O'Rourke}
\address{Department of Mathematics\\ University of Colorado\\ Campus Box 395\\ Boulder, CO 80309-0395\\USA}
\email{sean.d.orourke@colorado.edu}
\thanks{S. O'Rourke has been partially supported by NSF CAREER grant DMS-2143142.}
\author[C. Shearer]{Calum Shearer}
\address{Department of Mathematics\\ University of Colorado\\ Campus Box 395\\ Boulder, CO 80309-0395\\USA}
\email{calum.shearer@colorado.edu}
\date{\today}
\begin{document}
\begin{abstract}
    We study the gaps between consecutive singular values of random rectangular matrices.  Specifically, if $M$ is an $n \times p$ random matrix with independent and identically distributed entries and $\Sigma$ is an $n \times n$ deterministic positive definite matrix, then under some technical assumptions we give lower bounds for the gaps between consecutive singular values of $\Sigma^{1/2} M$.  As a consequence, we show that sample covariance matrices have simple spectrum with high probability.  Our results resolve a conjecture of Vu [{\em Probab. Surv.}, 18:179--200, 2021].  We also discuss some applications, including a bound on the spacings of eigenvalues of the adjacency matrix of random bipartite graphs.  
\end{abstract}

\maketitle

\section{Introduction}

The gaps between eigenvalues have been objects of significant study in random matrix theory.  A famous example is the Wigner surmise for the density of level spacings of normalized eigenvalues \cite{MehtaRMT,wignersurmise}.  Gaps between eigenvalues have been studied for a variety of matrix models in the context of gap distributions, extreme gap statistics, and overcrowding (Wegner) estimates; we refer the reader to \cite{MehtaRMT,tao2010randommatricesuniversalitylocal,erdos2014gapuniversalitygeneralizedwigner,tao2012asymptoticdistributionsingleeigenvalue,MR2917064,MR4416591,MR3112927,MR3231045,MR3741950,MR2868046,MR2587574,MR2851058,MR2536111,MR4009717,MR3699468,MR3695802} and references therein. 

A square matrix is said to have \emph{simple spectrum} if its eigenvalues all have (algebraic) multiplicity one. This was established for a class of Wigner matrices by Tao and Vu in \cite{tao2014randommatricessimplespectrum}.  In particular, the results hold for the adjacency matrix of Erd\H{o}s--R\'{e}nyi random graphs, which addressed a long-standing conjecture of Babai and is related to the graph isomorphism problem \cite{10.1145/800070.802206}.  
The gaps between complex eigenvalues of non-Hermitian random matrices with independent entries were studied in \cite{ge2017eigenvaluespacingiid}; these results were improved upon in \cite{luh2020eigenvectorscontrollabilitynonhermitianrandom}.  
More recently, it has been conjectured by Vu in \cite{vu2020recentprogresscombinatorialrandom} that the singular value spectrum for various random matrix models are simple with high probability.  

Related to this is the study of the {minimum} gap between any two eigenvalues. Ben Arous and Bourgade found the limiting distribution of the minimum gap for the Gaussian Unitary Ensemble \cite{MR3112927}.  In \cite{nguyentaovugaps}, Nguyen, Tao and Vu showed that the minimum gap between eigenvalues of a random \(n \times n\) Wigner matrix with subgaussian entries is greater than \(n^{-3/2-o(1)}\) with high probability, where \(o(1)\) denotes a quantity tending to zero as \(n\) tends to infinity.  
These results have also been extended to a class of sparse symmetric matrices by Lopatto, the second author and Vu in \cite{luhvusparse, luh_lopatto_sparse_gaps}.  Recent progress has also been made by Han in \cite{han2024smallballprobabilitymultiple} on bounding  the probability that the eigenvalues of Wigner matrices satisfy a fixed linear equation.

In this paper, we study the spacings of singular values of a random matrix $\Sigma^{1/2}M$, where $M$ is an $n \times p$ random rectangular matrix with independent entries and $\Sigma$ is an $n \times n$ deterministic positive definite matrix.   The squares of the singular values are equal to the eigenvalues of the sample covariance matrix \(M^\mathrm{T} \Sigma M\), and the block matrix \(\begin{pmatrix} 0 & \Sigma^{1/2} M \\ M^\mathrm{T} \Sigma^{1/2} & 0 \end{pmatrix}\) has eigenvalues equal to the singular values of \(M\) and their negatives. We demonstrate analogous results to those of \cite{nguyentaovugaps} for singular values of $\Sigma^{1/2} M$, under various assumptions on the matrix $\Sigma$ and the entries of $M$.  In particular, our results resolve a conjecture of Vu from \cite{vu2020recentprogresscombinatorialrandom} that the singular value spectrum of a square random matrix with independent, identically-distributed entries is simple with high probability. 
Our proof adapts the general strategy from \cite{nguyentaovugaps}.  However, the singular value equations introduce several significant conceptual difficulties along with numerous  technical obstacles that do not arise in \cite{nguyentaovugaps}.  

In an independent and concurrent note \cite{han2025singularvalues}, Han addresses the same conjecture and provides a stronger probability bound.  However, his results require a subgaussian assumption on the random variables and do not provide quantitative estimates of the gap sizes.  

\subsection{Notation}

All constants that appear in our results are independent of the dimensions \(n\) and \(p\). All constants, often denoted as $C$ and $c$, depend on the constants \(c_{\ref{matrix_model}}, K,L\) and \(m_4\) (introduced below) unless otherwise specified.  

For an $n \times p$ matrix $A$ with $p \leq n$, $\sigma_1(A) \geq \cdots \geq \sigma_p(A) \geq 0$ denote the ordered singular values of $A$, i.e., the eigenvalues of $\sqrt{A^\mathrm{T} A}$, where $A^{\mathrm{T}}$ is the transpose of $A$. A right singular vector of \(A\) is a (unit) eigenvector of \(A^\mathrm{T}A\), and a left singular vector is a (unit) eigenvector of \(A A^\mathrm{T}\), with eigenvalue equal to one of \(\sigma^2_1(A) \geq \dots \geq \sigma^2_p(A)\).  Note that $A A^{\mathrm{T}}$ may have more eigenvalues than $A^{\mathrm{T}}A$, but we ignore those trivial eigenvalues that are deterministically zero.  We often use $I$ to denote the identity matrix.

Asymptotic notation (e.g., \(o\), \(O\), \( \Omega \)) is used as \(n\) tends to infinity, so we say that \(f(n)=O(g(n))\) if \(|f(n)| \leq C g(n)\) for all \(n\) sufficiently large and for some constant \(C\) sufficiently large, independent of \(n\). We say that \(f(n) = o(g(n))\) if \(\lim_{n \to \infty} f(n)/g(n) = 0\). We say that \( f(n) = \Omega(g(n)) \) if \( f(n) \geq C g(n) \) for all \(n\) sufficiently large and for some constant \(C\) sufficiently large, independent of \(n\).

For a real number \(a\), \(\lceil a \rceil\) denotes the least integer greater than or equal to \(a\) and \(\lfloor a \rfloor\) denote the greatest integer less than or equal to \(a\). Given a finite set \(A\) we let \(\abs{A}\) denote the cardinality of \(A\). If \(\mathcal{A}\) and \(\mathcal{B}\) are events, \(\mathcal{A} \vee \mathcal{B}\) denotes their union and \(\mathcal{A} \wedge \mathcal{B}\) denotes their intersection. For a vector \(x = (x_1,x_2,\dots,x_m) \in \mathbb{R}^m\), its norm, \(\norm{x}\), is the Euclidean \(2\)-norm: \(\norm{x} = \left(\sum_{i=1}^m x_i^2\right)^{1/2}\) and \(S^{m-1}\) is the Euclidean sphere \(\left\{ x \in \mathbb{R}^m \colon \norm{x} = 1 \right\}\). If \(J = \{i_1,\dots, i_k\} \subset \set{1,2,\dots, m}\), then we define the vector \(x_{J} = (x_j)_{j \in J}\in \R^{J}\). For a positive integer \( m \), we define \( [m] \) to be the discrete interval \( \left\{1, 2, \ldots, m\right\} \).

Finally, for a matrix \(A\), we define its spectral (operator) norm
\[
    \norm{A} = \sup_{\norm{x}=1} \norm{Ax},
\]
where \(\norm{Ax}\) is the Euclidean \(2\)-norm. 
\subsection{The model and assumptions}
We will consider the random matrix $\Sigma^{1/2}M$, where $M$ is an $n \times p$ random rectangular matrix with independent entries and $\Sigma$ is an $n \times n$ deterministic positive definite matrix.   We make the following assumptions concerning $M$ and $\Sigma$.

\begin{assumption}\label{matrix_model}
    \(M\) is an \(n \times p\) random matrix, where \(p = \lambda n\) and the aspect ratio \(\lambda\) satisfies \begin{equation}
        c_{\ref{matrix_model}} \leq \lambda \leq 1
    \end{equation} for some constant \(c_{\ref{matrix_model}} > 0\).  In particular, this implies that $p \leq n$.  We also make the assumption that the entries of \(M\) are independent and identically distributed copies of the real-valued random variable \(\xi\), where \(\xi\) has mean zero, unit variance, and satisfies
    \begin{equation}
        \mathbb{E}[\xi^4] \leq m_4
    \end{equation}
    for some absolute constant \(m_4 < \infty\) (note that H\"older's inequality implies that \(\EE{\abs{\xi}^3} \leq m_4^{4/3}\)). The random variable $\xi$ is called the \emph{atom variable} of $M$.
    Furthermore, \(\xi_{ij}\) will be used to denote the \((i,j)\)-th entry of \(M\).
\end{assumption}

\begin{assumption}\label{opnorm-assumptions}
\(\Sigma\) is an \(n \times n\) deterministic positive definite matrix which satisfies
\begin{equation}\label{eq:M-sigma-assumption}
    \norm{\Sigma}, \norm{\Sigma^{-1}} \leq L^2
\end{equation}
for some absolute constant \(L \geq 1\). It follows that \(\Sigma^{1/2}, \Sigma^{-1/2}\) are well-defined and have norms bounded above by \(L\).  
\end{assumption}
Regarding the bounds on \(\lambda\) in the above assumptions, we note that the non-trivial singular values of $M$ and $M^{\mathrm{T}}$ are the same, so the assumption that $c_{\ref{matrix_model}} \leq \lambda \leq 1$ can be replaced with the bound $c_{\ref{matrix_model}} \leq \lambda \leq c_{\ref{matrix_model}}^{-1}$ without further explanation in the case where \( \Sigma = I \), where $I$ denotes the identity matrix. For simplicity, we restrict to the case when $\lambda$ is bounded above by one. 

In the case where \( \Sigma \neq I \), again, our results hold for $c_{\ref{matrix_model}} \leq \lambda \leq c_{\ref{matrix_model}}^{-1}$ but the proof for $\lambda \geq 1$ requires some superficial and routine changes to the proof.  See Remark \ref{rem:extension}.

    We define the event \(\mathcal{E}_K\) to be
    \begin{equation}
        \label{def:eK}
   \mathcal{E}_K := \set{\norm{M} \leq K \sqrt{n}} \ \wedge \ \set{\sigma_p(M) \neq 0},
   \end{equation} for \(K\geq1\) a constant. 
    
We will state some results in the case when the atom variable $\xi$ is subgaussian.  Recall that a random variable \(\xi\) is called \textit{subgaussian} if there exists a constant \(q > 0\) (called the \emph{subgaussian moment} of \(\xi\)) such that 
\[
\mathbb{P}\set{\abs{\xi}>t} \leq \frac{1}{Q}e^{-Q t^2}
\]
for all $t > 0$.  

\subsection{Main results}

We now state our main result.  

\begin{theorem}[Main result] \label{thm:main}
    Let \(M\) and \(\Sigma\) be as in Assumptions \ref{matrix_model} and \ref{opnorm-assumptions}, and assume $K \geq 1$. Then there exist constants \(c, C_{\ref{main1}} > 0\) (depending on \(c_{\ref{matrix_model}}, K,L\) and \(m_4\)) such that, for any \(n^{-c} \leq \alpha \leq c\) and \(\delta \geq n^{-c/\alpha}\), 
\begin{equation} \label{eq:main_bound}
\sup_{1 \leq i \leq p-1} \PP \left( \sigma_{i}^2(\Sigma^{1 / 2}M) - \sigma_{i+1}^2(\Sigma^{1 / 2}M) \leq  \sigma_{i+1}(\Sigma^{1 / 2}M) \delta n^{-1/2}  \wedge \mathcal{E}_{K} \right) \leq C_{\ref{main1}} \frac{\delta}{\sqrt{\alpha}} 
\end{equation}
where $\mathcal{E}_K$ is defined in \eqref{def:eK}.
\end{theorem}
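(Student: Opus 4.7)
The strategy adapts the approach of Nguyen--Tao--Vu \cite{nguyentaovugaps} from the Wigner setting to the sample covariance setting. Set $W := M^{\mathrm{T}}\Sigma M$, whose eigenvalues are $\mu_i := \sigma_i^2(\Sigma^{1/2}M)$, and fix an index $i \in [p-1]$. Partition $M = [M' \mid m_p]$, where $m_p \in \R^n$ is the last column, so that
\[
W = \begin{pmatrix} W' & (M')^{\mathrm{T}}\Sigma m_p \\ m_p^{\mathrm{T}} \Sigma M' & m_p^{\mathrm{T}} \Sigma m_p \end{pmatrix},\qquad W' := (M')^{\mathrm{T}} \Sigma M',
\]
and $W'$ is independent of $m_p$. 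Let $\mu'_1 \geq \cdots \geq \mu'_{p-1}$ be the eigenvalues of $W'$ with orthonormal eigenvectors $u'_j$; by Cauchy interlacing, $\mu_i \geq \mu'_i \geq \mu_{i+1}$, and each $\mu_i$ satisfies the secular equation
\[
\mu_i - m_p^{\mathrm{T}} \Sigma m_p = \sum_{j=1}^{p-1} \frac{|\langle u'_j, (M')^{\mathrm{T}}\Sigma m_p\rangle|^2}{\mu_i - \mu'_j}.
\]

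Writing $\Sigma^{1/2} M' u'_j = \sigma'_j v'_j$, where $v'_j$ is the $j$-th left singular vector of $\Sigma^{1/2} M'$ and $\sigma'_j = \sqrt{\mu'_j}$, one has $\langle u'_j, (M')^{\mathrm{T}} \Sigma m_p\rangle = \sigma'_j \langle v'_j, \Sigma^{1/2} m_p\rangle = \sigma'_j \,\|\Sigma^{1/2} v'_j\|\, \langle w_j, m_p\rangle$, where
\[
w_j := \Sigma^{1/2} v'_j / \|\Sigma^{1/2} v'_j\|.
\]
This is the renormalization highlighted in the paper's introduction, and its role is to express the quadratic form as a linear form in the i.i.d.\ entries of $m_p$. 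Subtracting the secular equations at $\mu_i$ and $\mu_{i+1}$ and isolating the (unique) negative term, corresponding to $j = i$, a standard rearrangement yields an inequality of the schematic form
\[
\mu_i - \mu_{i+1} \geq c \,(\sigma'_i)^2 \|\Sigma^{1/2} v'_i\|^2 |\langle w_i, m_p\rangle|^2 \Big/ \bigl(\mu_i - \mu'_i\bigr),
\]
so that on $\mathcal{E}_K$ (which controls $\sigma_1(M)$ and hence $\mu_i - \mu'_i \leq \|W\| \lesssim n$) a gap $\mu_i - \mu_{i+1} \leq \sigma_{i+1}\delta/\sqrt{n}$ forces
\[
|\langle w_i, m_p\rangle| \leq C\,\delta/\sqrt{n},
\]
where $w_i$ is measurable with respect to $M'$ (hence independent of $m_p$), and $\|w_i\| = 1$.

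The problem reduces to a small ball probability estimate: conditioning on $M'$ and applying the Rudelson--Vershynin small ball inequality to the linear combination $\langle w_i, m_p\rangle$ of i.i.d.\ entries, one needs
\[
\PP\bigl(|\langle w_i, m_p\rangle| \leq C\delta/\sqrt{n} \,\big|\, M'\bigr) \leq C'\delta
\]
on a good event of $M'$-probability at least $1 - O(\delta/\sqrt{\alpha})$. Such a bound requires $w_i$ to be \emph{incompressible} and to have \emph{large least common denominator} at a scale controlled by $\alpha$; the factor $1/\sqrt{\alpha}$ in the conclusion comes from the usual trade-off between the LCD threshold and the failure probability of the delocalization event, exactly as in \cite{nguyentaovugaps}.

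The main obstacle, to which I expect the bulk of the paper is devoted, is proving this structural statement for the (renormalized) left singular vectors of $\Sigma^{1/2}M'$: delocalization, incompressibility, and LCD lower bounds analogous to those available for Wigner matrices. Two features complicate matters relative to \cite{nguyentaovugaps}: the non-Hermitian structure underlying $\Sigma^{1/2}M$ (so that both $M$ and $M^{\mathrm{T}}$ enter the eigenvalue equation, and singular vectors come in pairs), and the presence of the deterministic $\Sigma$, which distorts geometry in $\R^n$ but not in $\R^p$; the passage from $v'_i$ to $w_i$ is engineered precisely so that the relevant inner product is a linear form in the i.i.d.\ entries of a column of $M$ independent of the fixed direction. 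Once the requisite incompressibility and LCD bounds for $w_i$ are established --- via invertibility-of-minors, distance-to-subspace estimates, and $\varepsilon$-net arguments in the Rudelson--Vershynin tradition --- the bound \eqref{eq:main_bound} follows by combining the deterministic reduction above with the small ball estimate and taking expectation over $M'$.
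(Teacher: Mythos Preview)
Your reduction via the secular equation is where the argument breaks. Subtracting the secular equations at $\mu_i$ and $\mu_{i+1}$ and dividing by $\mu_i-\mu_{i+1}$ gives
\[
1=\frac{|\langle u'_i,Y\rangle|^2}{(\mu_i-\mu'_i)(\mu'_i-\mu_{i+1})}-\sum_{j\ne i}\frac{|\langle u'_j,Y\rangle|^2}{(\mu_i-\mu'_j)(\mu'_j-\mu_{i+1})},
\qquad Y:=(M')^{\mathrm T}\Sigma m_p,
\]
and since every term in the sum is nonnegative (by interlacing both factors in each denominator have the same sign for $j\ne i$), this yields $(\mu_i-\mu'_i)(\mu'_i-\mu_{i+1})\le |\langle u'_i,Y\rangle|^2$. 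That is an \emph{upper} bound on a product of sub-gaps, the opposite of your schematic lower bound on $\mu_i-\mu_{i+1}$. Even if the direction were right, bounding $\mu_i-\mu'_i$ by $\|W\|\lesssim n$ would be far too crude to reach $|\langle w_i,m_p\rangle|\le C\delta/\sqrt n$.

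The paper (following \cite{nguyentaovugaps}) does not use the secular equation here. It projects the eigenvector equation $Wu=\mu_i u$, with $u=(u',b)^{\mathrm T}$, onto $u'_i$, obtaining
\[
|\langle w_i,m_p\rangle|\le L\,\frac{\mu_i-\mu'_i}{\sigma'_i\,|b|}\le L\,\frac{\mu_i-\mu_{i+1}}{\sigma'_i\,|b|},
\]
where $b$ is the \emph{last coordinate of the eigenvector $u$ of $W$}. On the gap event this gives $|\langle w_i,m_p\rangle|\le L\delta/(\sqrt n\,|b|)$, so the small-ball scale is $\delta$ (not $\delta/\sqrt n$) once one knows $|b|\gtrsim n^{-1/2}$. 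That last point is not free: it is handled by a double-counting argument over which column is removed, using that the right singular vector $u$ of $\Sigma^{1/2}M$ is itself incompressible (Proposition~\ref{nocompressible}), so a constant fraction of its coordinates exceed $c_1/\sqrt{2n}$. This averaging step is essential and is absent from your outline. Your identification of $w_i=\Sigma^{1/2}v'_i/\|\Sigma^{1/2}v'_i\|$ as the right object, and the need for incompressibility and large regularized LCD of $w_i$, are correct and match the paper.
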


A few comments concerning Theorem \ref{thm:main} are in order.  First, Theorem \ref{thm:main} is the analogue of Theorem 2.1 from \cite{nguyentaovugaps}.  Unlike the results from \cite{nguyentaovugaps}, we see that the $i$-th gap between the squared singular values depends on the location of the $(i+1)$-st singular value. Figures \ref{figure:whole spectrum} and \ref{figure: zoom ins}, corresponding to our model with \(\Sigma = I, n=2500\), and Rademacher atom variables, demonstrate this dependence on the location of the $(i+1)$-st singular value, showing that the squared singular values are much more crowded near the ``hard edge'' of the spectrum at \(0\) than in the bulk or near the soft edge at \(4 n\). Heuristically, this can be explained in the following way.  After appropriately scaling, the empirical measure constructed from the squared singular values is known to converge to the Marchenko--Pastur density as $p, n \to \infty$ such that the ratio $p/n \to \eta \in (0, 1]$.  When $\eta = 1$, the Marchenko--Pastur distribution is unbounded at the hard edge, and as such one expects the spacing of singular values at the edge to be different than the spacing in the bulk, where the density is bounded. The appearance of $\sigma_{i+1}(\Sigma^{1 / 2}M)$ on the right-hand side of \eqref{eq:main_bound} quantitatively captures this behavior. 

\begin{center}
    \begin{figure}[htbp]
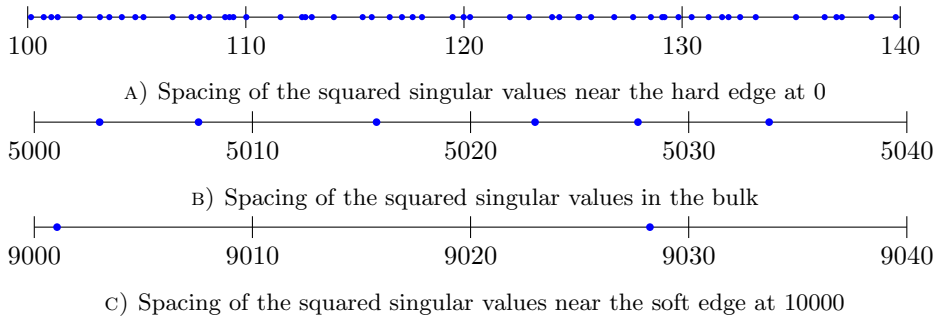
 
	\centering
	\begin{subfigure}{\linewidth}

\caption{Spacing of the squared singular values near the soft edge at \(10000\)}
\end{subfigure}
  \caption{Enlarged figures showing a numerical simulation of the spacings of the squared singular values of a \(2500 \times 2500\) random matrix with Rademacher atom variable and \(\Sigma=I\) at different parts of the spectrum.}
  \label{figure: zoom ins}
\end{figure} 
\end{center}

Second, our proof methods extend to random matrix models \(M\) whose entries have a non-zero mean, provided that either the matrix of means \( \mathbb{E} M \) is rank one or is of small norm. For details, see Remark \ref{rem:mean non zero} and Remark \ref{rem:gen_main1}.

Third, although our results may be true without any higher moment assumptions, our proof method needs the fourth moment assumption to guarantee that \( \mathbb{P}(\mathcal{E}_K^{c}) \) is small (see, for instance, \cite[Theorem 5.8]{Bai2010}), which is an unavoidable ingredient in proofs that involve $\varepsilon$-nets.
With the bounded fourth moment assumption, we may remove $\mathcal{E}_K$ from the event specified in \eqref{eq:main_bound}. A theorem of Bai--Yin  \cite[Theorem 1]{BaiYin} implies that under Assumption \ref{matrix_model} there exists a constant \(K \geq 1\) such that
\begin{align*}
    \mathbb{P}\set{\norm{M} > K \sqrt{n}} = o(1), 
\end{align*}
where \(K\) depends on \(c_{\ref{matrix_model}}\). Moreover \cite[Theorem 1.1]{MR4255145} states that there exists a small constant \( c > 0 \) such that
\[
    \mathbb{P}\set{\sigma_p(M) = 0} \leq 2 e^{-cn}.
\]
This gives that \(\mathbb{P}(\mathcal{E}_K^c) = o(1)\) and allows us to derive the following corollary.

\begin{corollary}
    \label{general simple}
    Let \(M\) and \(\Sigma\) satisfy Assumptions \ref{matrix_model} and \ref{opnorm-assumptions}. Then with probability \(1-o(1)\), all singular values of \(\Sigma^{1/2}M\) are distinct. In particular, the sample covariance matrix $M^{\mathrm{T}} \Sigma M$ has simple spectrum with probability $1 - o(1)$.  
\end{corollary}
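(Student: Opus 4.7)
My plan is to deduce the corollary directly from Theorem~\ref{thm:main} by a union bound over the $p-1$ consecutive pairs of singular values, combined with the bound $\mathbb{P}(\mathcal{E}_K^c) = o(1)$ already recorded in the preceding paragraphs via \cite[Theorem 1]{BaiYin} (for the operator norm) and \cite[Theorem 1.1]{MR4255145} (for $\sigma_p(M) \neq 0$).

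Concretely, I would fix a small constant $\alpha \in (0, c)$ with $c/\alpha \geq 3$, where $c$ is the constant from Theorem~\ref{thm:main}, and set $\delta := n^{-2}$. These choices satisfy $n^{-c} \leq \alpha \leq c$ and $\delta \geq n^{-c/\alpha}$ for all sufficiently large $n$, so Theorem~\ref{thm:main} applies and yields, uniformly in $i \in \{1, \ldots, p-1\}$,
\[
\mathbb{P}\!\left( \sigma_i^2(\Sigma^{1/2}M) - \sigma_{i+1}^2(\Sigma^{1/2}M) \leq \sigma_{i+1}(\Sigma^{1/2}M)\, \delta n^{-1/2} \ \wedge\ \mathcal{E}_K \right) \leq \frac{C_{\ref{main1}}\, \delta}{\sqrt{\alpha}} = O(n^{-2}).
\]

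Since the event $\{ \sigma_i^2(\Sigma^{1/2}M) = \sigma_{i+1}^2(\Sigma^{1/2}M) \}$ is trivially contained in the event above (the right-hand side is non-negative), a union bound over the $p-1 \leq n$ values of $i$ gives
\[
\mathbb{P}\!\left( \{ \exists i :\ \sigma_i(\Sigma^{1/2}M) = \sigma_{i+1}(\Sigma^{1/2}M) \} \ \wedge\ \mathcal{E}_K \right) = O(n^{-1}) = o(1).
\]
Combining this with $\mathbb{P}(\mathcal{E}_K^c) = o(1)$ then shows that, with probability $1-o(1)$, the squared singular values (and hence the singular values, since they are non-negative) of $\Sigma^{1/2}M$ are all distinct. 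The simple-spectrum claim for $M^T \Sigma M$ follows immediately from the identification in the introduction of the eigenvalues of $M^T \Sigma M$ with the squared singular values of $\Sigma^{1/2}M$.

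There is no substantive obstacle here beyond what is already handled by Theorem~\ref{thm:main}: the only thing to verify is that one can choose $\alpha$ and $\delta$ simultaneously satisfying the hypothesis $\delta \geq n^{-c/\alpha}$ of the theorem and the sharper requirement $\delta = o(1/p)$ that makes the union bound collapse to $o(1)$, and the simple choice $\alpha$ constant, $\delta = n^{-2}$ accomplishes both.
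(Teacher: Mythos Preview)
Your proof is correct and follows the same route as the paper: invoke Theorem~\ref{thm:main}, union bound over the $p-1 \le n$ consecutive gaps, and add back $\mathbb{P}(\mathcal{E}_K^c)=o(1)$. The only difference is the parameter choice: the paper (via Corollary~\ref{simple_singular_values}) takes $\alpha = n^{-c}$ and $\delta = n^{-c/\alpha} = n^{-cn^c}$, which yields the sharper quantitative bound $O(\exp(-n^c))$ before adding $\mathbb{P}(\mathcal{E}_K^c)$, whereas your choice of constant $\alpha$ and $\delta = n^{-2}$ gives only $O(n^{-1})$; either suffices for the stated $1-o(1)$ conclusion.
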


When the atom variable $\xi$ is subgaussian, we can give a more quantitative bound, using stronger results on the probability of \(\mathcal{E}_K^c\) that are standard in the literature. For example, it follows from \cite[Theorem 4.4.5]{vershynin-hdp} and \cite[Theorem 1.1]{rudelson2009smallestsingularvaluerandom} (or \cite[Theorem 1.1]{MR4255145}) that in the case of \(\xi\) being subgaussian we have that
\[
\mathbb{P}(\mathcal{E}_K^c) \leq 2 \exp(-cn)
\]
for some constant \(c\) depending on the subgaussian moment of \( \xi \). This yields the following corollary to our main result (further details are given in Section \ref{mainproofs}).

\begin{corollary}\label{subgauss simple}
    Let \(M\) and \(\Sigma\) satisfy Assumptions \ref{matrix_model} and \ref{opnorm-assumptions} with subgaussian atom variable $\xi$. Then there exists a positive constant \(c\) depending on \(c_{\ref{matrix_model}}, L, K\) and the subgaussian moment \(q\) such that with probability at least \(1 - 2 \exp(-n^{c})\) all singular values of \(\Sigma^{1/2} M\) are distinct. 
\end{corollary}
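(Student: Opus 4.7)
The plan is to apply Theorem~\ref{thm:main} at the smallest admissible values of $\alpha$ and $\delta$, producing a stretched-exponential per-gap bound, then union bound over the $p-1$ consecutive gaps and absorb $\mathcal{E}_K^c$ using the subgaussian tail recorded immediately above the statement.

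First, note that on $\mathcal{E}_K$ we have $\sigma_p(\Sigma^{1/2}M) > 0$, hence $\sigma_{i+1}(\Sigma^{1/2}M) > 0$ for every $i \in \{1,\ldots,p-1\}$. Since the singular values are ordered, a coincidence $\sigma_i(\Sigma^{1/2}M) = \sigma_{i+1}(\Sigma^{1/2}M)$ on $\mathcal{E}_K$ is equivalent to $\sigma_i^2 - \sigma_{i+1}^2 = 0$, and in particular is implied by
\[
\sigma_i^2(\Sigma^{1/2}M) - \sigma_{i+1}^2(\Sigma^{1/2}M) \leq \sigma_{i+1}(\Sigma^{1/2}M)\, \delta\, n^{-1/2}
\]
for every $\delta > 0$. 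Thus \eqref{eq:main_bound} may be used to control this coincidence, and the task reduces to choosing $(\alpha,\delta)$ to make the right-hand side exponentially small.

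Let $c_\star > 0$ denote the constant supplied by Theorem~\ref{thm:main}. Set $\alpha := n^{-c_\star}$ and $\delta := n^{-c_\star/\alpha} = n^{-c_\star n^{c_\star}}$, the smallest admissible values; for $n$ large these lie in the admissible range $n^{-c_\star} \leq \alpha \leq c_\star$ and $\delta \geq n^{-c_\star/\alpha}$. Theorem~\ref{thm:main} then gives, uniformly in $i$,
\[
\PP\bigl( \sigma_i(\Sigma^{1/2}M) = \sigma_{i+1}(\Sigma^{1/2}M),\ \mathcal{E}_K \bigr) \leq C_{\ref{main1}} \frac{\delta}{\sqrt{\alpha}} = C_{\ref{main1}}\, n^{-c_\star n^{c_\star} + c_\star/2} \leq \exp(-n^{c_1})
\]
for some $c_1 > 0$ and all sufficiently large $n$. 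A union bound over $i \in \{1,\ldots,p-1\}$ together with $p \leq n$ yields
\[
\PP\bigl( \exists\, i : \sigma_i(\Sigma^{1/2}M) = \sigma_{i+1}(\Sigma^{1/2}M),\ \mathcal{E}_K \bigr) \leq n \exp(-n^{c_1}) \leq \exp(-n^{c_2})
\]
for some $0 < c_2 < c_1$.

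Finally, combining with the subgaussian bound $\PP(\mathcal{E}_K^c) \leq O(\exp(-cn))$ recorded just above the corollary, we obtain
\[
\PP(\text{not all singular values of } \Sigma^{1/2}M \text{ are distinct}) \leq \exp(-n^{c_2}) + O(\exp(-cn)) \leq C_{\ref{simple_singular_values}} \exp(-n^{c}),
\]
for a suitable $c > 0$, after relabeling constants and absorbing finitely many small-$n$ cases. The argument poses no substantive obstacle beyond parameter bookkeeping: both ingredients are already in hand, and the only care required is to check that the chosen $\alpha$ and $\delta$ satisfy the hypotheses of Theorem~\ref{thm:main} for all $n$ large, which is immediate from the definitions.
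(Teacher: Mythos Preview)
Your proof is correct and follows essentially the same approach as the paper: the paper proves the intermediate Corollary~\ref{simple_singular_values} by taking $\alpha = n^{-c}$, $\delta = n^{-c n^{c}}$ in Theorem~\ref{main1} and union bounding over the $p-1$ gaps, and then combines with the subgaussian bound $\PP(\mathcal{E}_K^c) = O(e^{-\tilde c n})$ to obtain Corollary~\ref{subgauss simple}. One cosmetic slip: you write that the coincidence ``is implied by'' the inequality in \eqref{eq:main_bound}, but of course the implication goes the other way (the coincidence implies the inequality), which is what your displayed probability bound actually uses.
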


\begin{remark}
    In \cite{han2025singularvalues}, under the subgaussian assumption on the random variables and setting $\Sigma = I$, Han shows the stronger result that the probability that the singular value spectrum is simple occurs with probability at least $1- \exp(-cn)$ for some constant $c > 0$.   
\end{remark}

The next corollary follows by choosing \(\alpha\) to be a small constant in Theorem \ref{thm:main}.  
\begin{corollary}
    Let \(M\) and \(\Sigma\) satisfy Assumptions \ref{matrix_model} and \ref{opnorm-assumptions}. If we define
    \[\delta_{\min} = \min_{1 \leq i \leq p-1} \left\{\sigma_{i}(\Sigma^{1/2}M) - \sigma_{i+1}(\Sigma^{1/2}M) \right\},\] 
    then 
    \[
    \delta_{\min} \geq n^{-3/2-o(1)}.
    \]
    with probability \(1 - o(1)\).
\end{corollary}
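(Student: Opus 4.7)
The plan is to reduce the bound on $\delta_{\min}$ to Theorem \ref{thm:main} by converting the ordinary gap $\sigma_i - \sigma_{i+1}$ to the squared gap $\sigma_i^2 - \sigma_{i+1}^2$, then taking a union bound over the $p-1$ indices. Fix any $\varepsilon > 0$; it suffices to show $\mathbb{P}(\delta_{\min} < n^{-3/2-\varepsilon}) = o(1)$. Set $t := n^{-3/2-\varepsilon}$ and $\delta := 3 t \sqrt{n} = 3 n^{-1-\varepsilon}$.

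The key algebraic identity is
\[
\sigma_i^2 - \sigma_{i+1}^2 = (\sigma_i - \sigma_{i+1})(\sigma_i + \sigma_{i+1}),
\]
so on the event $\{\sigma_i - \sigma_{i+1} \leq t\} \cap \{\sigma_{i+1} \geq t\}$ one has $\sigma_i + \sigma_{i+1} \leq 2\sigma_{i+1} + t \leq 3\sigma_{i+1}$, whence $\sigma_i^2 - \sigma_{i+1}^2 \leq 3 t \sigma_{i+1} = \sigma_{i+1}\delta n^{-1/2}$. Since $\sigma_{i+1} < t$ forces $\sigma_p(\Sigma^{1/2}M) \leq \sigma_{i+1} < t$, this yields the inclusion
\[
\{\delta_{\min} < t\} \subseteq \bigcup_{i=1}^{p-1}\left\{ \sigma_i^2(\Sigma^{1/2}M) - \sigma_{i+1}^2(\Sigma^{1/2}M) \leq \sigma_{i+1}(\Sigma^{1/2}M)\, \delta n^{-1/2} \right\} \ \cup\ \{\sigma_p(\Sigma^{1/2}M) < t\}.
\]

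Next I would apply Theorem \ref{thm:main} with the constant choice $\alpha := \min\{c,\, c/(1+\varepsilon)\}$, where $c$ is the constant from that theorem; for $n$ large one has $n^{-c/\alpha} \leq \delta \leq c$, so the hypotheses are met. Theorem \ref{thm:main} then bounds each term in the union (intersected with $\mathcal{E}_K$) by $C_{\ref{main1}}\delta/\sqrt{\alpha} = O(n^{-1-\varepsilon})$, and a union bound over at most $n$ indices contributes $O(n^{-\varepsilon}) = o(1)$.

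Finally, I would control the two auxiliary events. That $\mathbb{P}(\mathcal{E}_K^c) = o(1)$ is given by the discussion preceding Corollary \ref{general simple}. For the smallest singular value, $\sigma_p(\Sigma^{1/2}M) \geq L^{-1}\sigma_p(M)$ by Assumption \ref{opnorm-assumptions}, and the standard small-ball estimates for the least singular value of an i.i.d.\ matrix under a fourth moment assumption (of the type already invoked in the proof of Theorem \ref{thm:main}, cf.\ \cite{VershyninInvert, MR4255145}) give $\sigma_p(M) \geq n^{-1/2-o(1)}$ with probability $1-o(1)$, which is far larger than $Lt = O(n^{-3/2-\varepsilon})$. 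I do not anticipate any serious obstacle; the only thing worth highlighting is that the $\sigma_{i+1}$ appearing on the right-hand side of \eqref{eq:main_bound} is precisely what makes the reduction strong enough near the hard edge, where a naive conversion via $\sigma_i - \sigma_{i+1} \approx (\sigma_i^2 - \sigma_{i+1}^2)/(2\sigma_i)$ would lose ground.
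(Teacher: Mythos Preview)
Your proposal is correct and follows essentially the same route as the paper: convert $\sigma_i-\sigma_{i+1}$ to $\sigma_i^2-\sigma_{i+1}^2$ on the event that $\sigma_{i+1}$ is not too small, invoke Theorem~\ref{thm:main} with $\alpha$ a constant, union bound over $i$, and handle $\mathcal{E}_K^c$ and the smallest singular value separately (the paper packages the last as Proposition~\ref{prop:small sv control}). Your algebraic conversion via $\sigma_i+\sigma_{i+1}\le 3\sigma_{i+1}$ is in fact cleaner than the paper's, which passes through the minor $\sigma_i'=\sigma_i(\Sigma^{1/2}M')$ and solves the quadratic $\tfrac{1}{\sigma_i'}y^2+2y-\delta n^{-1/2}\le 0$ using $\sqrt{1+x}\ge 1+x/4$; both arrive at the same place.
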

The justification of the above corollary can be found within the proof of Corollary \ref{cor:sing_value_diff}.

\subsection{Proof Strategy}\label{proof strategy} 
In order to effectively demonstrate the main ideas of our proof, we describe the proof strategy assuming \(\Sigma\) is the identity matrix.  We present the proof of the general case in the bulk of the paper. 

Our overall strategy is motivated by that of \cite{nguyentaovugaps}. We begin by decomposing the matrix $M$ as 
\[
M = \begin{pmatrix} M' \mid  X \end{pmatrix},
\]
where \(X\) is the final column of \(M\). Note that by our assumptions, the entries of \(X\) are independent from \(M'\). We now manipulate the eigenvalue-eigenvector equation \(M^\mathrm{T} M v = \sigma_i^2(M) v\) in order to relate the gap size \(\sigma_i^2(M) - \sigma_{i+1}^2(M)\) to the inner product of singular vectors of the minor \(M'\) with the random vector \(X\). Indeed, if one decomposes \(v\) as 
\[
v = \begin{pmatrix}
    v' \\
    b
\end{pmatrix}
\]
where \(v' \in \mathbb{R}^{p-1}\) and \(b \in \mathbb{R}\)
then the eigenvalue-eigenvector equation becomes 
\[
M^\mathrm{T} M v = \begin{pmatrix}
    {M'}^\mathrm{T} M' & {M'}^\mathrm{T} X \\
    X^\mathrm{T} M' & X^\mathrm{T} X 
\end{pmatrix}
\begin{pmatrix}
    v' \\
    b
\end{pmatrix}
= \sigma_i^2(M) \begin{pmatrix}
    v' \\
    b
\end{pmatrix},
\]
the first \((n-1)\) rows of which read as
\[
M'^{\mathrm{T}} M' v' + M'^{\mathrm{T}} X b = \sigma_i^2(M) v'.
\]
By multiplying this equation on the left by \( u^{\mathrm{T}} \), where \( u \) is a left singular vector of \( M' \) corresponding to the \( i \)-th largest singular value, and then rearranging terms, we find that if \( u \) is a left singular vector of \( M' \), the following holds:
\[
\abs{u^{\mathrm{T}} X} \leq \frac{\sigma_i^2(M) - \sigma_i^2(M')}{\sigma_i(M') |b|} \leq  \frac{\sigma_i^2(M) - \sigma_{i+1}^2(M)}{\sigma_i(M')|b|} ,
\]
where the final inequality follows from Cauchy's interlacing law. Therefore, the problem of bounding the gaps between squared singular values from below can be reduced to bounding the probability that $\abs{u^{\mathrm{T}} X}$ is small (along with showing that, on average, a coordinate of a singular vector \(v\)\ is sufficiently large). This probability is known as a \textit{small ball probability} and related to the phenomena of \textit{anti-concentration}. Importantly, our argument ensures that the random vector \(X\) is independent of the entries of the singular vector \(u\). The proof reduces to controlling the small ball probability of a typical singular vector.

The main technical hurdle arises in the process of excluding potential singular vectors. One could attempt to show that the probability \(\mathbb{P} \left\{\norm{M^{\mathrm{T}}Mv - a} \leq \varepsilon \sqrt{n} \right\}\) is small for any \(a \in \mathbb{R}^p\) in order to demonstrate that for fixed \(v\) and \(\sigma^2\), it is unlikely that \(M^{\mathrm{T}} M v= \sigma^2 v\). However, the sample covariance matrix \(M^{\mathrm{T}} M\) does not have independent entries or even independent rows or columns, which is a serious complication. We deal with this issue by controlling the event that \(\|Mx - a\| \leq \varepsilon \sqrt{n}\) and \(\|M^{\mathrm{T}}y -b\| \leq \varepsilon' \sqrt{n}\) \textit{simultaneously}. This can be thought of as a discretization of the singular value equations
\[
Mv = \sigma u \text{ and } M^\mathrm{T} u = \sigma v.
\]
Our main technical innovations occur in the handling of these simultaneous events. 

The inclusion of the covariance matrix \(\Sigma\) does not change any of the main mathematical ideas of our proofs, but leads to slightly more involved arguments. Our covariance matrix \(\Sigma\) is assumed to have bounded operator norm, independent of \(n\) and \(p\) in order to successfully use our \(\varepsilon\)-net arguments.  

\subsection{Applications}\label{applications}
The gap between eigenvalues of random matrices is important in applications, particularly to numerical linear algebra (e.g., \cite{nguyentaovugaps, luh_lopatto_sparse_gaps}). When working with rectangular and non-symmetric matrices, one is concerned with \emph{singular value gaps}, rather than eigenvalue gaps \cite{KUHL2024109022}. Even in the case of symmetric matrices, singular value gaps can play an important role. For instance, the power method (also known as power iteration) is an algorithm to find the largest eigenvalue (by absolute value) and its corresponding eigenvector. The convergence rate of the error given by the power method is exponential, with base given by the ratio \(\sigma_2 / \sigma_1 = 1 - \frac{\sigma_1 - \sigma_2}{\sigma_1}\). Power iteration and its related ideas (e.g., Krylov subspaces, Lanczos Algorithm) are still an area of active research \cite{Yuan_RB_Lanczos}. In particular, variants of the Lanczos Algorithm (whose convergence also depends on gaps of subsequent singular values \(\sigma_{k+1} - \sigma_k\)) are found in standard numerical linear algebra programs in MATLAB, Mathematica, Scipy (python), and others through their dependency on Fortran's ARPACK library \cite{lehoucq1998arpack}. In particular, these methods and other variants are found in various implementations of principal component analysis (PCA) and singular value decomposition (SVD) algorithms \cite{Musco_Rand_Blk_Krylov_SVD_Power_Method, Zhang2020_EIF_Analysis_SVD}, as well as neural network architectures and computations \cite{pmlr-v139-li21u, Peng2022_Graph_Convol_Paradigm}. Power iteration also appears in recommendation engines widely used across the internet \cite{twitter_power_method, google_page_rank}. Even modern, machine learning-based recommendation engines use SVD algorithms that assume simplicity of singular values \cite{Peng2022_Graph_Convol_Paradigm}.

As another example, we examine the graph isomorphism problem (GI), which asks if two given graphs \(G, G'\) are isomorphic. Famously, it is not known if GI is solvable in polynomial time, nor if it is NP-complete \cite{MR3966534}. However, there are several polynomial-time algorithms to determine if \(G\) and \(G'\) are isomorphic assuming that the eigenvalues of the adjacency matrices are simple or of bounded multiplicity \cite{leighton1979certificates, 10.1145/800070.802206, Klus_spectral_assignment_graph_iso}. Let $A$ and $B$ be the adjacency matrix of \(G\) and \(G'\) respectively. To convey the intuition for the algorithm in \cite{leighton1979certificates}, suppose that the entries of the eigenvectors of $A$ and $B$ corresponding to $\lambda_1$ have distinct magnitudes; then this specifies the only possible mapping between the nodes of $G$ and $G'$. The actual algorithm needs to proceed more carefully, accounting for entries with identical magnitudes in the eigenvectors and iteratively partitioning the nodes into possible isomorphism classes.

The tractability of GI for graphs with simple spectrum suggests a natural question, posed by Babai \cite{10.1145/800070.802206}, of whether most $n$ vertex graphs fall into this class. This was answered in the affirmative in \cite{tao2014randommatricessimplespectrum,nguyentaovugaps}. Quantitatively, in \cite{nguyentaovugaps}, the authors prove that of the $2^{\binom{n}{2}}$ simple graphs on $n$ vertices, all but a $\delta_n$ proportion of the graphs have simple spectrum, where $\delta_n = O(n^{-C})$ for any constant $C$. However, it is often of interest to restrict GI to a subset of graphs with certain combinatorial properties.   
For example, GI is known to be solvable in polynomial time in the case of planar graphs \cite{Hopcroft1972}, graphs with bounded degree \cite{Luks1982} and others \cite{MR3966534}. GI, and more generally the subgraph isomorphism problem, restricted to bipartite graphs is of recent interest \cite{UEHARA2005479, 8806332}. Since the number of bipartite graphs that span two sets of size $n$ is exactly $2^{n^2}$, which is an exponentially small fraction of all possible graphs on these vertices, the result in \cite{nguyentaovugaps} does not provide any information on the proportion of bipartite graphs with simple spectrum.

Note that the adjacency matrix of a bipartite graph is of the form \(\begin{pmatrix} 0 & M \\ M^{\mathrm{T}} & 0 \end{pmatrix}\), so that the non-zero eigenvalues of this matrix are precisely \(\pm \sigma_i(M)\) (the non-zero singular values of \(M\) and their negations). Hence, if one is able to conclude that the singular values of \(M\) has no repeated singular values, then GI is solvable for the associated pair of graphs in polynomial time. 
\begin{theorem} \label{thm:bipartite}
   Of the \( 2^{n^2} \) bipartite graphs on \( n + n \) vertices, the proportion of graphs whose adjacency matrices have non-simple spectrum is at most \(2 \exp(-n^{c})  \) for some constant $c > 0$. 
\end{theorem}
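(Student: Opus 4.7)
The plan is to reduce simplicity of the bipartite adjacency matrix to simplicity and positivity of singular values of an \(n\times n\) random \(\{0,1\}\)-matrix, and then invoke Corollary \ref{subgauss simple} in its non-mean-zero extension. A uniformly random bipartite graph on \(n+n\) vertices corresponds bijectively to a matrix \(M\in \{0,1\}^{n\times n}\) with i.i.d.\ Bernoulli\((1/2)\) entries, and its adjacency matrix \(A=\begin{pmatrix}0 & M \\ M^{T} & 0\end{pmatrix}\) has eigenvalues \(\pm\sigma_1(M),\ldots,\pm\sigma_n(M)\). Hence \(A\) has simple spectrum if and only if (i) the singular values of \(M\) are pairwise distinct, and (ii) \(\sigma_n(M)>0\); otherwise \(+0\) and \(-0\) would coincide as a repeated eigenvalue of \(A\). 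It therefore suffices to show that each of (i) and (ii) fails with probability at most \(C\exp(-n^c)\) and apply a union bound.

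For (i), I would write \(2M=J+\widetilde M\), where \(J\) is the all-ones matrix and \(\widetilde M:=2M-J\) has i.i.d.\ Rademacher (hence mean-zero, unit variance, subgaussian) entries. The singular values of \(M\) are distinct if and only if those of \(2M\) are, and \(\mathbb{E}(2M)=J\) is rank one. Consequently, the extension of Corollary \ref{subgauss simple} to matrix models with non-zero mean whose mean matrix has low rank (flagged in Remark \ref{rem:mean non zero} and Remark \ref{rem:gen_main1}) applies, with \(\Sigma=I\), \(\lambda=1\) and subgaussian atom \(\widetilde M_{ij}\); this yields that event (i) fails with probability at most \(C\exp(-n^{c})\). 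For (ii), a standard invertibility bound for Bernoulli \(\{0,1\}\)-matrices gives \(\mathbb{P}\{\sigma_n(M)=0\}\le \exp(-cn)\); this can be extracted from \cite[Theorem 1.1]{MR4255145} applied to the centered matrix \(\widetilde M\) combined with a rank-one perturbation comparison, which is the same type of argument that underlies the event \(\mathcal{E}_K\) in the non-mean-zero extension. A union bound over (i) and (ii) then gives the claimed bound, and the proportion of bipartite graphs with non-simple spectrum is at most \(C\exp(-n^{c})\).

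The main obstacle is the non-vanishing mean of the Bernoulli atom variable, which prevents a direct citation of Corollary \ref{subgauss simple} or Theorem \ref{thm:main}; these are stated under Assumption \ref{matrix_model} which requires mean zero. The correct framework is the low-rank-mean extension: since \(\mathbb{E}(2M)=J\) has rank one, one can isolate the rank-one contribution and redo the \(\varepsilon\)-net and anti-concentration estimates on the centered matrix \(\widetilde M\), with the rank-one piece contributing only an additive error that the Weyl-type comparisons absorb. This is precisely the content promised in Remarks \ref{rem:mean non zero} and \ref{rem:gen_main1}; once that extension is in hand, the bipartite application reduces to the two-line observation that \(A\)'s eigenvalues are \(\pm\sigma_i(M)\), together with the invertibility estimate for (ii).
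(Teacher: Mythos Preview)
Your approach is essentially the same as the paper's: reduce to distinctness and positivity of the singular values of the $\{0,1\}$ biadjacency matrix $M$, and handle (i) via the rank-one-mean extension in Remark~\ref{rem:gen_main1} applied to $\widetilde M + J$ with Rademacher $\widetilde M$.

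One small point on (ii): your justification via \cite[Theorem~1.1]{MR4255145} on the centered matrix $\widetilde M$ ``combined with a rank-one perturbation comparison'' does not work as stated, since a rank-one shift can certainly destroy invertibility (non-singularity of $\widetilde M$ says nothing about non-singularity of $\widetilde M + J$). Instead, invoke a singularity bound directly for $\{0,1\}$ Bernoulli matrices; the paper cites \cite{tikhomirov2020singularity} for exactly this in the proof of the subsequent corollary. With that correction your argument is complete, and in fact slightly more explicit than the paper's, which defers the $\sigma_n(M)>0$ step to the corollary rather than folding it into Theorem~\ref{thm:bipartite} itself.
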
 

This theorem is not an immediate application of our main result, as the adjacency matrix of such a graph is of the form \( \begin{pmatrix} 0 & M \\ M^{\mathrm{T}} & 0  \end{pmatrix} \), where \( M \) has non-zero mean. In particular, the spectral norm of \( M \) is of order \( n \), rather than order \(\sqrt{n}\), as discussed in Remark \ref{rem:mean non zero}. However, we generalize our main result to include this class of random matrices in Remark \ref{rem:gen_main1}.

As a consequence, this shows that GI is solvable in polynomial time on all but a \( e^{-cn^{c}} \) proportion of bipartite graphs on \( n + n \) vertices.
\begin{corollary}
	There exists a constant $c > 0$ and a polynomial-time algorithm for bipartite graphs on $n+n$ vertices after excluding at most 
    \[  2\exp(-n^{c} ) 2^{n^2}\] graphs.  Phrased probabilistically, there exists a polynomial-time algorithm that resolves GI on two uniformly sampled bipartite graphs on $n+n$ vertices that succeeds with probability at least $1 - 2 \exp(-n^{c} )$.
\end{corollary}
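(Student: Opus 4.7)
The plan is to deduce Theorem~\ref{thm:bipartite} from the subgaussian analogue of our main gap estimate, generalized to allow a rank-one mean shift as asserted in Remark~\ref{rem:gen_main1}. A uniformly random bipartite graph on $n+n$ vertices corresponds to an $n \times n$ biadjacency matrix $M$ with i.i.d.\ Bernoulli$(1/2)$ entries; writing $M = \tfrac{1}{2}(J + N)$ where $J$ is the all-ones matrix and $N$ has i.i.d.\ Rademacher entries, the nonzero eigenvalues of $\begin{pmatrix} 0 & M \\ M^{T} & 0 \end{pmatrix}$ are exactly $\pm \tfrac{1}{2}\sigma_i(J+N)$. Hence the adjacency spectrum is simple if and only if all singular values of $J+N$ are pairwise distinct and strictly positive, since any zero singular value would contribute eigenvalue $0$ with multiplicity at least two to the block matrix.

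Next, I would apply the generalization of Theorem~\ref{thm:main} stated in Remark~\ref{rem:gen_main1} to the matrix $J+N$ with $\Sigma = I$, $\lambda = 1$, and subgaussian Rademacher atom variable. Together with the argument that deduces Corollary~\ref{subgauss simple} from Theorem~\ref{thm:main} (taking $\alpha$ a small constant, $\delta$ polynomially small in $n$, and a union bound over $i \in \{1,\ldots,n-1\}$), this yields that all consecutive squared-singular-value gaps of $J+N$ are strictly positive on an event of probability at least $1 - C\exp(-n^c)$. For the separate requirement $\sigma_n(J+N) > 0$, I would invoke a standard Rademacher invertibility estimate such as \cite[Theorem 1.1]{MR4255145}, observing that the deterministic rank-one shift by $J$ is harmless: for any candidate approximate null vector $v$, the component of $v$ orthogonal to the all-ones direction retains nearly all of its mass, and on that subspace only $N$ acts. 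Translating the resulting probability bound into a counting statement (uniform measure on bipartite graphs coincides with product Bernoulli measure on the entries of $M$) yields Theorem~\ref{thm:bipartite}.

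The main obstacle is the rank-one shift itself: $\|J\| = n$ violates the $K\sqrt{n}$ operator-norm condition built into the event $\mathcal{E}_K$, so Theorem~\ref{thm:main} does not apply off the shelf, as emphasized in Remark~\ref{rem:mean non zero}. The generalization in Remark~\ref{rem:gen_main1} handles this by decomposing each candidate singular vector of $J+N$ into its component along the one-dimensional all-ones direction (absorbed into the deterministic rank-one perturbation) and its orthogonal complement, on which $J$ vanishes and the compressible/incompressible and regularized-LCD machinery developed in Sections~\ref{compressible section} and~\ref{incompressible section} applies to the centered Rademacher matrix $N$ essentially unchanged. Verifying that the $\varepsilon$-net union bounds over small-LCD level sets remain subexponential after this decomposition is the delicate step; once that generalization is in place, Theorem~\ref{thm:bipartite} is immediate.
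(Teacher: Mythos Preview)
Your proposal is a proof sketch of Theorem~\ref{thm:bipartite}, not of the stated Corollary. The Corollary's conclusion is the existence of a polynomial-time algorithm for GI on bipartite graphs; establishing that the adjacency spectrum is simple with the required probability is necessary but not sufficient. The paper's proof of the Corollary takes Theorem~\ref{thm:bipartite} as already proven, adds non-singularity of $M$ via \cite{tikhomirov2020singularity} so that zero is not an eigenvalue, and then \emph{invokes the known polynomial-time GI algorithms for graphs with simple spectrum} from \cite{leighton1979certificates, 10.1145/800070.802206, Klus_spectral_assignment_graph_iso}. Your write-up never mentions this last algorithmic step, so as written it does not establish the Corollary.

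Two smaller points. First, re-deriving Theorem~\ref{thm:bipartite} here is unnecessary overhead: for the Corollary you may simply cite it. Second, your treatment of $\sigma_n(J+N)>0$ is more circuitous than needed. Since $J+N=2M$ with $M$ the $0/1$ Bernoulli biadjacency matrix, $\sigma_n(J+N)=2\sigma_n(M)$, and invertibility of square Bernoulli $0/1$ matrices is exactly what \cite{tikhomirov2020singularity} (or \cite{MR4255145}) gives directly; the rank-one decomposition argument you sketch (``on the orthogonal complement only $N$ acts'') is not the right mechanism and is not needed.
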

\begin{proof}
	Theorem \ref{thm:bipartite} allows us to conclude that non-zero eigenvalues of the adjacency matrix are simple with the requisite probability.  The well-established results on the singularity of square Bernoulli matrices assure us that the adjacency matrix is non-singular (see, for example, \cite{tikhomirov2020singularity}).  Thus, any of the algorithms from \cite{leighton1979certificates, 10.1145/800070.802206, Klus_spectral_assignment_graph_iso} apply.  
\end{proof}
We remark that it is possible to use our methods to prove that the non-zero eigenvalues of the adjacency matrix for \emph{unbalanced} bipartite graphs are simple as well.  However, in this case, the null space of the adjacency matrix is deterministically large.  The algorithm in \cite{leighton1979certificates} can be modified by restricting attention to the eigenvectors corresponding to non-zero eigenvalues, but we do not pursue this algorithmic direction here.

\subsection{Outline}
The rest of the paper is devoted to the proofs of our main results.  We present some anti-concentration bounds in Section \ref{sec:anti-concentration}.  Technical bounds concerning compressible and incompressible singular vectors are presented in Sections \ref{compressible section} and \ref{incompressible section}, respectively.  Finally, these tools are combined in Section \ref{mainproofs} to complete the proofs. 

\section{Anti-concentration tools} \label{sec:anti-concentration}
We begin by stating some lemmas regarding the anti-concentration of random vectors, which will be used during our proofs. 

The L\'{e}vy concentration function is defined as
\begin{align*}
    \mathcal{L}(X, \varepsilon) := \sup_{a \in \RR^m} \PP \set{ \norm{X-a} \leq \varepsilon},
\end{align*}
where \(X\) is a random vector in \(\mathbb{R}^m\) and \(\varepsilon >0\). $\mathcal{L}(X, \varepsilon)$ bounds the \emph{small-ball probabilities} of $X$, i.e., the probabilities that $X$ falls in a ball of radius $\varepsilon$.  

We begin by showing that our assumptions on the boundedness of the \(4\)th moment of \(\xi\) implies that \(\xi\) obeys the following weak anti-concentration bound. This type of result is well-known, but we include a complete proof as we could not locate a statement with our exact assumptions in the literature.
\begin{lemma}\label{PZ}
Let \(\xi\) be a random variable with mean \(0\) and variance \(1\) satisfying \( \mathbb{E}[\xi^4] \leq m_4\)
for some constant \(m_4 < \infty\). Then for all \(\varepsilon \in (0,1)\), there exists \(p=p(\varepsilon,m_4)<1\) depending only on \(\varepsilon\) and \(m_4\) such that
    \begin{equation}
    \mathcal{L}(\xi,\varepsilon) \leq p    .
    \end{equation}
\end{lemma}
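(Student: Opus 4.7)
The plan is to control $\sup_a \PP(\abs{\xi - a} \leq \varepsilon)$ by splitting into two cases based on the size of $\abs{a}$, using the fourth moment for the tail and a Paley--Zygmund style argument for the bulk.

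First, I would handle the case where $\abs{a}$ is large. By Markov's inequality applied to $\xi^4$, one has $\PP(\abs{\xi} \geq t) \leq m_4/t^4$ for all $t > 0$. If $\abs{a} > A$ for some threshold $A = A(\varepsilon, m_4)$ chosen large enough, then the event $\set{\abs{\xi-a} \leq \varepsilon}$ forces $\abs{\xi} \geq A - \varepsilon$, so
\[
    \PP(\abs{\xi - a} \leq \varepsilon) \leq \frac{m_4}{(A - \varepsilon)^4},
\]
which can be made, say, at most $1/2$ by taking $A$ sufficiently large.

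Second, for $\abs{a} \leq A$, I would exploit the variance constraint. Writing $q = \PP(\abs{\xi - a} \leq \varepsilon)$ and $E = \set{\abs{\xi - a} \leq \varepsilon}$, we have $\mathbb{E}[(\xi - a)^2] = 1 + a^2 \geq 1$. Splitting this expectation over $E$ and $E^c$, bounding the first piece by $\varepsilon^2 q$ and applying Cauchy--Schwarz to the second piece yields
\[
    1 \leq \varepsilon^2 q + \sqrt{\mathbb{E}[(\xi - a)^4]} \sqrt{1 - q}.
\]
Using the convexity bound $(\xi - a)^4 \leq 8(\xi^4 + a^4)$, we get $\mathbb{E}[(\xi-a)^4] \leq 8(m_4 + A^4)$, a constant depending only on $\varepsilon$ and $m_4$. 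Then from $\varepsilon^2 q \leq \varepsilon^2 < 1$, rearranging gives
\[
    (1 - \varepsilon^2)^2 \leq 8(m_4 + A^4)(1 - q),
\]
so $q \leq 1 - \frac{(1 - \varepsilon^2)^2}{8(m_4 + A^4)} < 1$ since $\varepsilon < 1$.

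Taking $p$ to be the maximum of the two case bounds gives the required $p = p(\varepsilon, m_4) < 1$. The only mild subtlety is that the Cauchy--Schwarz argument in the second step loses control as $\abs{a}$ grows (because the fourth moment bound scales like $a^4$); this is precisely why the separate Markov tail bound for large $\abs{a}$ is needed, and there is no real obstacle beyond balancing these two estimates through the choice of $A$.
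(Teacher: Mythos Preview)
Your proof is correct and follows essentially the same structure as the paper's: split on the size of $|a|$, use a moment tail bound for large $|a|$, and a Paley--Zygmund type argument for bounded $|a|$. The only cosmetic differences are that the paper fixes the threshold at $A=3$ and uses Chebyshev (second moment) rather than the fourth-moment Markov bound for the tail, and that the paper quotes Paley--Zygmund directly whereas you reprove it via Cauchy--Schwarz; neither difference is substantive.
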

    \begin{proof}
    We seek to bound
    \(
        \sup_{a\in \R} \mathbb{P} \set{|\xi - a| \leq \varepsilon}.
    \)
    We do this by splitting into the case \(\abs{a} > 3\) and the case \(\abs{a} \leq 3 \), providing an upper bound uniform in \(a\) in each case. 
    For the range \(\abs{a} > 3\), observe that if \(\varepsilon \in (0,1)\) then \(\abs{\xi-a} < \varepsilon\) implies that \(\abs{\xi} \geq 2\). It follows from Markov's inequality and \(\EE{\abs{\xi}^2} =1\) that
    \begin{align*}
        \sup_{a\in \R\setminus [-3,3]} \mathbb{P}\set{|\xi - a| \leq \varepsilon}
        \leq \mathbb{P}\set{|\xi| \geq 2} \leq \frac{1}{4}.
    \end{align*}
    On the other hand, for \(\abs{a} \leq 3\), we apply the Paley-Zygmund inequality to the random variable \(\abs{\xi-a}^2\), noting that
    \[
    \mathbb{E}\abs{\xi-a}^2 = \mathbb{E}\abs{\xi^2} - 2a \mathbb{E}[\xi] + a^2 = 1+a^2,
    \]
    so that for any \(\varepsilon \in (0,1) \), 
    \begin{align}
       \nonumber \mathbb{P}\set{|\xi - a| \leq \varepsilon} &= 1 - \mathbb{P}\set{|\xi - a|^2 > \varepsilon^2} \\
       \nonumber &= 1 - \mathbb{P}\set{|\xi - a|^2 > \frac{\varepsilon^2}{1+a^2} \EE{ \abs{\xi-a}^2 }} \\
       \nonumber &\leq 1 - \left( 1 - \frac{\varepsilon^2}{1+a^2} \right)^2 \frac{ \big( \mathbb{E}\ \abs{\xi - a}^2 \big) ^2}{\mathbb{E}\abs{\xi - a}^{4}} \\
       \nonumber &\leq 1 - \left( 1 - \varepsilon^2 \right)^2 \frac{(1+a^2)^2}{\EE \abs{\xi-a}^4} \\
        &\leq 1 - (1-\varepsilon^2)^2 \frac{1}{\EE \abs{\xi-a}^4}. \label{eq:PZ}
    \end{align}
    Now we observe that for \(\abs{a} \leq 3\) we have 
    \begin{align*}
        \EE{\abs{\xi-a}^4} &= \EE{\abs{\xi}^4} - 4 \EE{\abs{\xi}^3}a + 6 \EE{\abs{\xi}^2}a^2 - 4 \EE{\abs{\xi}}a^3 + a^4 \\
        &\leq m_4 + 4 m_4^{3/4} \abs{a} + 6a^2 + a^4 \\
        &\leq m_4 + 12m_4^{3/4} + 54 + 81,
    \end{align*}
    an expression depending only on \(m_4\) which we can use to upper bound the denominator in \eqref{eq:PZ}, giving us an upper bound for \(\mathbb{P}\set{|\xi - a| \leq \varepsilon}\) depending only on \(\varepsilon\) and \(m_4\) when \(\abs{a} \leq 3\). Taking the maximum of this upper bound and \(1/4\) gives the required constant \(p = p(\varepsilon,m_4)\). 
    \end{proof}
Now, let us state the Tensorization Lemma (\cite[Lemma 3.4]{VershyninInvert}), which we will use throughout. This lemma allows one to use bounds on the L\'{e}vy concentration function of each individual coordinate of a random vector in order to bound the L\'{e}vy concentration function of the random vector. 

\begin{lemma}[Tensorization Lemma] \label{tensorization} 
    Let \(X = (X_1,\dots, X_m)\) be a random vector in \(\mathbb{R}^m\) with independent coordinates \(X_i\). Suppose there exist \(\varepsilon>0\) and  \(0 < p < 1\) independent of \(i\) such that
    \[
    \mathcal{L}(X_i,\varepsilon) \leq p
    \]
    for all \(i\). Then there exist \(\varepsilon_1>0\) and \(0 < p_1 < 1\) depending only on \(\varepsilon\) and \(p\) such that
    \[
    \mathcal{L}(X, \varepsilon_1 \sqrt{m}) \leq p_1^m.
    \]
\end{lemma}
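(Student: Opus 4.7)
The plan is to run the standard exponential-moment argument. Fix an arbitrary $a = (a_1,\ldots,a_m) \in \mathbb{R}^m$, so the goal is to bound $\mathbb{P}\{\|X-a\| \leq \varepsilon_1 \sqrt{m}\}$ uniformly in $a$. The key is to compare $\|X-a\|^2$ to a sum of independent scalar quantities and exploit independence multiplicatively via $e^{-s\|X-a\|^2} = \prod_i e^{-s(X_i-a_i)^2}$.

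First, for any $s > 0$, Markov's inequality applied to the nonnegative random variable $e^{-s\|X-a\|^2}$ gives
\begin{equation*}
    \mathbb{P}\{\|X-a\|^2 \leq \varepsilon_1^2 m\} \leq e^{s \varepsilon_1^2 m} \, \mathbb{E}\left[e^{-s\|X-a\|^2}\right] = e^{s \varepsilon_1^2 m} \prod_{i=1}^{m} \mathbb{E}\left[e^{-s(X_i-a_i)^2}\right],
\end{equation*}
where the last equality uses independence of the $X_i$. Next, for each factor I would split the expectation over the events $\{|X_i - a_i| \leq \varepsilon\}$ and its complement: on the first event the integrand is at most $1$ and on the second it is at most $e^{-s\varepsilon^2}$. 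Together with the hypothesis $\mathcal{L}(X_i, \varepsilon) \leq p$, this yields the single-coordinate bound
\begin{equation*}
    \mathbb{E}\left[e^{-s(X_i-a_i)^2}\right] \leq p + e^{-s\varepsilon^2}.
\end{equation*}

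Now I would pick the two free parameters $s$ and $\varepsilon_1$ so that the right-hand side $e^{s\varepsilon_1^2}(p + e^{-s\varepsilon^2})$ is some constant $p_1 < 1$. For concreteness, choose $s := \varepsilon^{-2} \log\bigl(\tfrac{2}{1-p}\bigr)$, which makes $e^{-s\varepsilon^2} = (1-p)/2$, so $p + e^{-s\varepsilon^2} = (1+p)/2 \in (p,1)$. Then choose $\varepsilon_1 > 0$ small enough (depending only on $\varepsilon$ and $p$) that $e^{s\varepsilon_1^2} \cdot (1+p)/2 =: p_1 < 1$. Any $\varepsilon_1^2 < s^{-1}\log\bigl(\tfrac{2}{1+p}\bigr)$ works. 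Combining the two displays gives $\mathbb{P}\{\|X-a\| \leq \varepsilon_1 \sqrt{m}\} \leq p_1^m$, and taking the supremum over $a$ yields $\mathcal{L}(X, \varepsilon_1\sqrt{m}) \leq p_1^m$, as required.

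There is no real obstacle here — the argument is essentially bookkeeping once one commits to the exponential moment / Markov strategy. The only subtle point is the simultaneous tuning of $s$ and $\varepsilon_1$: one must ensure that $s$ is chosen first to make the single-coordinate factor strictly less than $1$ with room to spare, so that $\varepsilon_1$ can then be chosen strictly positive while keeping the product $e^{s\varepsilon_1^2}(p+e^{-s\varepsilon^2})$ below $1$. Both constants depend only on $p$ and $\varepsilon$, matching the statement.
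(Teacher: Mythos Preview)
Your proof is correct. Note that the paper does not actually prove this lemma; it is quoted from \cite[Lemma 3.4]{VershyninInvert} without proof. Your argument is precisely the standard exponential-moment proof given in that reference (and elsewhere in the literature), so there is nothing to compare.
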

We shall typically use the above lemma in the situation that \(X\) is a product of the form \(Mv\), where \(M\) is a random matrix, and \(v\) is a fixed unit vector. Also note that we will often write \(e^{-cm}\) for some \(c >0\) in place of \(p_1^m\) when applying this lemma. 

Note that \((Mv)_i = \sum_{j=1}^m \xi_{ij} v_j\). Therefore, we also often use the following lemma \cite[Lemma 3.3]{VershyninInvert} on the anti-concentration of sums.

\begin{lemma} \label{sum anticonc}
    Let \(\xi_1,\dots \xi_n\) be independent random variables with unit variances and centered \(4\)th moments uniformly bounded by \(m_4 < \infty\). Then for every \(0 < \varepsilon < 1\), there exists \(0 < p < 1\) depending only on \(\varepsilon\) and \(m_4\) such that for every \(x = (x_1, \dots, x_m) \in S^{m-1}\), the sum \(S = \sum_{i=1}^m x_i \xi_i\) satisfies
    \[
    \mathcal{L}(S,\varepsilon) \leq p.
    \]
\end{lemma}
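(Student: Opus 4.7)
My plan is to establish this bound via a second-moment argument applied to the shifted sum $(S-a)^2$, in the same Paley--Zygmund spirit as the proof of Lemma \ref{PZ}. First I would reduce to the mean-zero case: since the L\'{e}vy concentration function is invariant under translating $S$ by a deterministic constant, I can replace each $\xi_i$ by $\xi_i - \mathbb{E}\xi_i$ without loss of generality, so that each $\xi_i$ has mean zero, unit variance, and $\mathbb{E}[\xi_i^4] \leq m_4$.

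Next I would compute the low-order moments of $S$. Independence and the vanishing first moment give $\mathbb{E} S = 0$ and $\mathbb{E} S^2 = \sum_i x_i^2 = 1$. The multinomial expansion of $S^4$ retains only the terms in which each $\xi_i$ appears to an even power, so
\[
\mathbb{E} S^4 = \sum_i x_i^4 \mathbb{E}[\xi_i^4] + 6 \sum_{i<j} x_i^2 x_j^2 = 3 + \sum_i x_i^4 \left(\mathbb{E}[\xi_i^4] - 3\right) \leq m_4 + 3 =: C_1,
\]
where I used $\sum_i x_i^4 \leq (\sum_i x_i^2)^2 = 1$ and $\mathbb{E}[\xi_i^4] - 3 \leq m_4$. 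H\"older's inequality then gives $|\mathbb{E} S^3| \leq C_1^{3/4} =: C_2$, and expanding $(S-a)^4$ and substituting the above moment estimates yields $\mathbb{E}(S-a)^4 \leq C_1 + 4|a| C_2 + 6 a^2 + a^4$.

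The core of the argument applies the Paley--Zygmund inequality to the nonnegative random variable $Y := (S-a)^2$ for each fixed $a \in \mathbb{R}$. Since $\mathbb{E} Y = 1 + a^2 \geq 1 > \varepsilon^2$, Paley--Zygmund yields
\[
\mathbb{P}(|S-a| > \varepsilon) = \mathbb{P}(Y > \varepsilon^2) \geq \frac{(\mathbb{E} Y - \varepsilon^2)^2}{\mathbb{E} Y^2} \geq \frac{(1 + a^2 - \varepsilon^2)^2}{C_1 + 4|a|C_2 + 6a^2 + a^4}.
\]

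Finally I would show the right-hand side is bounded below, uniformly in $a$, by a positive constant depending only on $\varepsilon$ and $m_4$. Using the elementary inequality $1 + a^2 - \varepsilon^2 \geq (1 - \varepsilon^2)(1 + a^2)$, the numerator is at least $(1-\varepsilon^2)^2$ when $|a| \leq 1$ and at least $(1-\varepsilon^2)^2 a^4$ when $|a| > 1$. For the denominator, setting $D := C_1 + 4C_2 + 7$, it is bounded by $D$ when $|a| \leq 1$ and by $D a^4$ when $|a| > 1$ (using $|a| \leq a^2 \leq a^4$ in that range). Either way the ratio is at least $(1-\varepsilon^2)^2/D$, so taking the supremum over $a$ gives $\mathcal{L}(S, \varepsilon) \leq 1 - (1-\varepsilon^2)^2/D =: p < 1$, with $p$ depending only on $\varepsilon$ and $m_4$. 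The proof is quite direct; the only mildly delicate point is arranging the estimates so that the bulk regime $|a| \leq 1$ and the tail regime $|a| > 1$ can be handled with a single denominator constant $D$, yielding a single $a$-uniform lower bound.
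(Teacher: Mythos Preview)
Your proof is correct. The paper does not supply its own argument for this lemma but quotes it as \cite[Lemma 3.3]{VershyninInvert}; your Paley--Zygmund argument is essentially the standard proof and closely mirrors the paper's own proof of Lemma~\ref{PZ} for a single variable, with the minor refinement that you handle all $a\in\mathbb{R}$ uniformly via the ratio bound $(1-\varepsilon^2)^2/D$ rather than splitting into a Chebyshev regime for large $|a|$ and a Paley--Zygmund regime for bounded $|a|$.
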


\section{Compressible singular vectors} \label{compressible section}

We begin by proving a lemma (Lemma \ref{compcomp}) which will enable us to prove that with probability \(1 - 2e^{-c_{\ref{nocompressible}}n}\), all singular vectors of our matrix 
model are incompressible, where we introduce the relevant definitions and constants below. 

\begin{definition}
    Let \(c_0 > 0\). A vector \(v = (v_1,\dots v_m) \in \mathbb{R}^{m}\) is called \textit{\(c_0\)-sparse} if it is supported on at most \(\lfloor c_0 m \rfloor \) coordinates (where \(i \in \left\{1, 2, ..., m\right\}\) is in the support of \(v\) if \(v_i \neq 0\)).
\end{definition}

\begin{definition}
    Let \(c_0,c_1 > 0\). A unit vector \(v \in S^{m-1}\) is known as \( (c_{0}, c_{1}) \)-\textit{compressible} (or just \textit{compressible} if one is suppressing parameters) if it is within distance \(c_1\) of the set of \(c_0\)-sparse vectors. Such a set is denoted by \(\comp_m(c_0,c_1)\). In other words,
    \[
    \comp_m(c_0,c_1) = \set{ v \in S^{m-1} \colon \exists c_0\text{-sparse } x \text{ such that } \norm{v-x} \leq c_1}.
    \]
    A unit vector which is not compressible is called \textit{incompressible}. The set of incompressible vectors is denoted as \(\incomp_m(c_0,c_1) := S^{m-1}\setminus \comp_{m}(c_0,c_1)\).
\end{definition}

\begin{lemma}\label{compcomp} Let \(M\) and \(\Sigma\) be as in Assumptions \ref{matrix_model} and \ref{opnorm-assumptions}. There exist positive constants \(c_0, c_1, c_{\ref{compcomp}}\) depending on \(c_{\ref{matrix_model}},K,m_4\) and \(L\) such that
\begin{equation}\label{compeq-n}
\sup_{0 \leq \sigma^2 \leq K^2 L^2 n} \PP \set{ \inf_{v \in \comp_p(c_0,c_1)} \norm{M^{\mathrm{T}} \Sigma M v - \sigma^2 v} \leq c_{\ref{compcomp}} n \wedge \mathcal{E}_K } \leq  2 e^{-c_{\ref{compcomp}} n} 
\end{equation}
and 
\begin{align}\label{compeq-p}
   \sup_{0 \leq \sigma^2 \leq K^2 L^2 n} \PP \set{ \inf_{\Sigma^{1 / 2}u/ \norm{\Sigma^{1 / 2}u} \in \comp_n(c_0,c_1)} \norm{\Sigma^{1 / 2}M M^{\mathrm{T}} \Sigma^{1 / 2} u - \sigma^2 u} \leq c_{\ref{compcomp}} n \wedge \mathcal{E}_K } \\
\nonumber \leq 2 e^{-c_{\ref{compcomp}} n}.
\end{align}
\end{lemma}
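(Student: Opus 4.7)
The plan is to prove \eqref{compeq-n} via an $\varepsilon$-net argument, and then derive \eqref{compeq-p} by a change of variables that reduces it to a closely analogous statement. For \eqref{compeq-n}, I decompose any compressible unit vector as $u = u_0 + r$ with $u_0$ a unit $c_0$-sparse vector and $\|r\| \leq 2 c_1$. On the event $\mathcal{E}_K$ we have $\|M^T \Sigma M\| \leq K^2 L^2 n$ and $\sigma^2 \leq K^2 L^2 n$, so
\[
\|M^T \Sigma M u - \sigma^2 u\| \geq \|M^T \Sigma M u_0 - \sigma^2 u_0\| - 4 K^2 L^2 n c_1,
\]
and choosing $c_1$ a sufficiently small constant (depending on $K, L$) the perturbation is absorbed in the target $c_{\ref{compcomp}} n$. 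It therefore suffices to obtain the desired lower bound uniformly over unit $c_0$-sparse vectors.

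For a fixed unit $u_0$ with $\supp(u_0) =: I \subset [p]$ and $|I| \leq c_0 p$, the observation that underlies the whole argument is that $M u_0 = \sum_{i \in I} (u_0)_i C_i$ depends only on the columns $C_i$ of $M$ indexed by $I$, while for $j \notin I$ the $j$-th coordinate of $M^T \Sigma M u_0 - \sigma^2 u_0$ (with $(u_0)_j = 0$) equals $C_j^T (\Sigma M u_0)$. Thus $C_j$ is independent of $\Sigma M u_0$, and conditioning on the latter, Lemma \ref{sum anticonc} applied to each scalar $C_j^T \Sigma M u_0$ followed by the Tensorization Lemma \ref{tensorization} yields
\[
\|M^T \Sigma M u_0 - \sigma^2 u_0\|^2 \geq \sum_{j \notin I} \abs{C_j^T \Sigma M u_0}^2 \geq \varepsilon^2 (p - |I|) \|\Sigma M u_0\|^2
\]
with probability at least $1 - e^{-c(p - |I|)}$. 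Since $\|\Sigma M u_0\| \geq L^{-1} \|M u_0\|$, and a further application of Lemmas \ref{sum anticonc} and \ref{tensorization} to the independent coordinates of $M u_0$ gives $\|M u_0\|^2 \geq c n$ with probability at least $1 - e^{-c n}$, using $p \geq c_{\ref{matrix_model}} n$ we obtain $\|M^T \Sigma M u_0 - \sigma^2 u_0\| \geq c'' n$ for this fixed $u_0$ except on an event of probability at most $C e^{-\alpha n}$.

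The uniform bound is then obtained by covering each slice $\{u_0 \in S^{p-1} : \supp(u_0) \subseteq I\}$ by a $\delta$-net of cardinality at most $(3/\delta)^{|I|}$ and summing over the subsets $I \subseteq [p]$ with $|I| \leq c_0 p$, whose number is bounded by $(C_1/c_0)^{c_0 p}$; for $c_0$ sufficiently small and $\delta$ a fixed small constant, the resulting $e^{\beta n}$ count is dwarfed by the per-point failure probability, so a union bound gives the required conclusion on the net. The standard operator-norm perturbation argument on $\mathcal{E}_K$ then transfers the bound to all unit sparse vectors, completing \eqref{compeq-n}. For \eqref{compeq-p} I set $w := \Sigma^{1/2} v'/\|\Sigma^{1/2} v'\|$ and $\alpha := \|\Sigma^{1/2} v'\| \in [L^{-1}, L]$, so that $v' = \alpha \Sigma^{-1/2} w$ with $w \in \comp_n(c_0, c_1)$, and a short calculation gives
\[
\Sigma^{1/2} M M^T \Sigma^{1/2} v' - \sigma^2 v' = \alpha\, \Sigma^{1/2}\bigl(M M^T w - \sigma^2 \Sigma^{-1} w\bigr),
\]
whose norm is at least $L^{-2} \|M M^T w - \sigma^2 \Sigma^{-1} w\|$. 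For sparse $w$ with support $I \subset [n]$ and $j \notin I$, denoting by $R_i$ the $i$-th row of $M$, one has $(M M^T w)_j = R_j \cdot M^T w$ with $M^T w = \sum_{i \in I} w_i R_i$, so $R_j$ is independent of $M^T w$; the conditioning/tensorization/net argument from above then applies verbatim, with $\|M^T w\|^2 \geq c p$ in place of $\|M u_0\|^2 \geq c n$.

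The main technical obstacle throughout is that $M^T \Sigma M$ and $\Sigma^{1/2} M M^T \Sigma^{1/2}$ have neither independent entries nor independent rows; independence is recovered only by restricting to coordinates outside the support of the sparse approximant, which is precisely why the compressibility hypothesis $c_0 < 1$ is essential to yield an $n$-sized contribution from the sum. The only other subtlety is calibrating the three small constants $c_0$ (controlling the net cardinality), $c_1$ (controlling the compressible-to-sparse perturbation), and $\delta$ (controlling the sparse-to-net perturbation) so that all error terms fit inside the target $c_{\ref{compcomp}} n$ while keeping the exponential gain $e^{-\alpha n}$ in the union bound.
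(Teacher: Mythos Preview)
Your proposal is correct and follows essentially the same approach as the paper: reduce compressible vectors to sparse ones by perturbation on $\mathcal{E}_K$, then for a fixed sparse $u_0$ with support $I$ observe that the coordinates of $M^T\Sigma M u_0$ indexed by $j\notin I$ are $C_j^T(\Sigma M u_0)$ with $C_j$ independent of $\Sigma M u_0$, apply Lemma~\ref{sum anticonc} plus Lemma~\ref{tensorization} twice (once to get $\|Mu_0\|\gtrsim\sqrt{n}$ and once conditionally for the outer product), and finish with a net over sparse unit vectors whose cardinality is beaten by the exponential bound for $c_0$ small. The paper merely packages the same steps in block-matrix notation $M=(X\mid Y)$ and quotes a ready-made net lemma; the only slip in your write-up is the constant $L^{-1}$ in $\|\Sigma M u_0\|\geq L^{-1}\|Mu_0\|$, which should be $L^{-2}$ under Assumption~\ref{opnorm-assumptions}.
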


\begin{proof}
Note that we will introduce \(c_0,c_1, c_{\ref{compcomp}}\) as arbitrary small constants, before fixing them at the end of the proof, to ensure that the result of the lemma holds. 

Our proof strategy is analogous to that appearing in the proof of \cite[Proposition 4.2]{VershyninInvert}.

Let \(\sigma^2 \in [0,K^2L^2n]\) remain fixed throughout. Notice that for a unit vector \(u\),
\begin{equation}
\nonumber \left\|\Sigma^{1 / 2}M M^{\mathrm{T}}\Sigma^{1 / 2}u - \sigma^2 u\right\| \leq c_{\ref{compcomp}}n
\end{equation}
implies 
\begin{equation}\label{eq:comp_u}
\left\|M M^{\mathrm{T}}w - \sigma^2 \Sigma^{-1} w\right\| \leq L^2 c_{\ref{compcomp}}n,
\end{equation}
\begin{details}
    \begin{align*}
        \left\|\Sigma^{1 / 2}M M^{\mathrm{T}}\Sigma^{1 / 2}u - \sigma^2 u\right\| 
        &= \left\|\left\|\Sigma^{1 / 2}u\right\|\Sigma^{1 / 2} \left( M M^{\mathrm{T}} w - \sigma^2 \Sigma^{-1} w \right)\right\| \\
        &\geq \sigma_{\text{min}}(\Sigma^{1 / 2})^2 \left\|M M^{\mathrm{T}}w - \sigma^2 \Sigma^{-1} w\right\|\\
        &\geq L^{-2}\left\|M M^{\mathrm{T}}w - \sigma^2 \Sigma^{-1} w\right\|,
    \end{align*}
\end{details}
where \(w = \Sigma^{1 / 2}u / \norm{\Sigma^{1 / 2}u}\). So we will consider \eqref{eq:comp_u} in place of its analogous expression in \eqref{compeq-p}.
By \cite[Lemma 3.7]{VershyninInvert}, there exists a \((12c_1)\)-net \(\mathcal{N}\) of \(\comp_n(c_0,c_1)\) such that
\[
\abs{\mathcal{N}} \leq \left( \frac{9}{c_0 c_1} \right)^{c_0 n},
\]
which also serves as an upper bound on the cardinality of a \((12 c_1)\)-net \(\mathcal{M}\) of \(\comp_p(c_0,c_1)\), as \(p \leq n\). Furthermore, an inspection of the proof in \cite{VershyninInvert} shows that we can assume these nets consist only of sparse unit vectors.

For \(v \in \comp_{p}(c_0,c_1)\) and \(w \in \comp_n(c_0,c_1)\), one can find sparse unit vectors \(v_0 \in \mathcal{M}\), \(w_0 \in \mathcal{N}\) such that 
\[
\norm{v - v_0} \leq 12c_1 \quad \text{and} \quad \norm{w - w_0} \leq 12c_1.
\]
The event 
\[
\norm{M^{\mathrm{T}} \Sigma M v - \sigma^2 v} \leq c_{\ref{compcomp}} n \wedge \mathcal{E}_K
\]
implies that
\begin{align}\label{compeq-v0}
\norm{M^{\mathrm{T}} \Sigma M v_0 - \sigma^2 v_0} &\leq \norm{M^{\mathrm{T}} \Sigma M (v_0 - v) - \sigma^2(v_0-v)} + \norm{M^{\mathrm{T}} \Sigma Mv - \sigma^2 v} \\
\nonumber        & \leq 12c_1(K^2 L^2 n + \sigma^2) + c_{\ref{compcomp}}n \\             \nonumber        &\leq (c_{\ref{compcomp}} + 24c_{1}K^2 L^2)n.
\end{align}
Therefore,
\begin{align*}
\PP &\set{ \inf_{v \in \comp_p(c_0,c_1)} \norm{M^{\mathrm{T}} \Sigma M v - \sigma^2 v} \leq c_{\ref{compcomp}} n \wedge \mathcal{E}_K } \\
&\qquad \qquad \leq \PP \set{ \inf_{v_0 \in \mathcal{M}} \norm{M^{\mathrm{T}} \Sigma M v_0 - \sigma^2 v_0} \leq (c_{\ref{compcomp}} + 24c_{1}K^2 L^2)n }. 
\end{align*}

Similarly, the event 
\[
\norm{\Sigma^{1 / 2}M M^{\mathrm{T}} \Sigma^{1 / 2} u - \sigma^2 u} \leq c_{\ref{compcomp}} n \wedge \mathcal{E}_K
\]
implies that
\begin{align}\label{compeq-w0}
&\norm{M M^{\mathrm{T}} w_0 - \sigma^2 \Sigma^{-1} w_0} \\
&\leq \left\|MM^{\mathrm{T}} (w_0 - w) - \sigma^2\Sigma^{-1}(w_0 - w)\right\| + \left\|MM^{\mathrm{T}} w - \sigma^2 \Sigma^{-1} w\right\|\\
\nonumber &\leq 12 c_1 (K^2 n  +\sigma^2 L^2) + L^2 c_{\ref{compcomp}} n \\
\nonumber &\leq (L^2 c_{\ref{compcomp}} + 24c_{1} K^2 L^4 )n,
\end{align}
where we have used that both \(L,K \geq 1\). 
Thus, 
\begin{align*}
    \PP &\set{ \inf_{\Sigma^{1 / 2}u/ \norm{\Sigma^{1 / 2}u} \in \comp_n(c_0,c_1)} \norm{\Sigma^{1 / 2}M M^{\mathrm{T}} \Sigma^{1 / 2} u - \sigma^2 u}} \\
    &\qquad \leq \PP \set{ \inf_{w_0 \in \mathcal{N}} \norm{M M^{\mathrm{T}} w_0 - \sigma^2 \Sigma^{-1} w_0} \leq (L^2 c_{\ref{compcomp}} + 24c_{1} K^2 L^4 )n}.
\end{align*}

We now fix $v_0 \in \mathcal{M}$ and $w_0 \in \mathcal{N}$.  By reordering if necessary, we will assume that \(v_0\) and \(w_0\) are supported in the first \(\left\lfloor c_0 p\right\rfloor\) and \(\left\lfloor c_0 n\right\rfloor\) coordinates, respectively.
We first consider equation \eqref{compeq-v0}. We decompose \(M\) as:
\begin{equation} \label{eq:decomp 1}
M=
\begin{pmatrix}
X \rvert Y
\end{pmatrix},
\end{equation}
where \(X\) is an \(n \times \lfloor c_0p \rfloor\) matrix and \(Y\) is an \( n \times \lceil(1-c_0)p \rceil \) matrix. Note that the prior assumption that \(v_0\) is supported on its first \(\lfloor c_0 p \rfloor\) coordinates does not incur any entropy cost, as it only for notational convenience when we write our decomposition \eqref{eq:decomp 1}. We simply make this assumption so that subsequent equations involving this decomposition are cleaner and easier to follow. 

It then follows that
\begin{equation}
\nonumber M^{\mathrm{T}} \Sigma M v_0 - \sigma^2 v_0 = 
\left(\begin{array}{@{}c|c@{}}
    X^{\mathrm{T}} \Sigma X & X^{\mathrm{T}} \Sigma Y \\ \hline
    Y^{\mathrm{T}}\Sigma X & Y^{\mathrm{T}}\Sigma Y
  \end{array}\right)  
  \left(
  \begin{array}{c}
  v_{0 1} \\
  \vdots \\
  v_{0 \left\lfloor c_0 p\right\rfloor} \\
  0 \\
  \vdots \\
  0 
  \end{array} \right)
   - \sigma^2 \left(
  \begin{array}{c}
  v_{01} \\
  \vdots \\
  v_{0 \left\lfloor c_0 p\right\rfloor} \\
  0 \\
  \vdots \\
  0 
  \end{array} \right).
\end{equation}
Looking at the last \(\lceil (1-c_0)p \rceil\) coordinates of the right-hand side of this equation gives:
\[
\norm{M^{\mathrm{T}} \Sigma M v_0 - \sigma^2 v_0} \geq \norm{Y^{\mathrm{T}} \Sigma X \left(
  \begin{array}{c}
  v_{01} \\
  \vdots \\
  v_{0 \left\lfloor c_0 p\right\rfloor} \\
  \end{array} \right)}.
  \]
So that if \(\norm{M^{\mathrm{T}} \Sigma M v - \sigma^2 v} \leq c_{\ref{compcomp}} n\), then
\begin{equation}
\norm{Y^{\mathrm{T}} \Sigma X \left(
  \begin{array}{c}
  v_{01} \\
  \vdots \\
  v_{0 \left\lfloor c_0 p\right\rfloor} \\
  \end{array} \right)}
  \leq 
  (c_{\ref{compcomp}} + 24c_{1}K^2 L^2)n. \label{eq:3.5}
\end{equation}

For notational convenience, we shall denote
\[
v_0' := \left(
  \begin{array}{c}
  v_{01} \\
  \vdots \\
  v_{0 \left\lfloor c_0 p\right\rfloor} \\
  \end{array} \right),
\]
which is a unit vector, due to being the projection of the unit vector \(v_0\) onto its support. 
Let us apply the Tensorization Lemma (Lemma \ref{tensorization}) to both the rows of \(Xv_0'\) and then to the rows of \(Y^{\mathrm{T}}(\Sigma Xv_0')\). Note that the \(k\)th entry of \(Xv_0'\) is equal to the sum
\begin{equation}\label{compmeansum}
    (Xv_0')_{k} = \sum_{j=1}^{c_0 p} v_{0j} \xi_{kj}.
\end{equation}

As each \(\xi_{kj}\) is independent and identically distributed, each \((Xv_0')_{k}\) is independent and identically distributed. So by Lemma \ref{sum anticonc}:
\begin{equation}\label{compmeanshift}
    \mathcal{L}((Xv_0')_k, 1 / 2) \leq \tilde{c}
\end{equation}
for some constant \(\tilde{c}\) depending on \(m_4\). 

Then the Tensorization Lemma can be applied over the \(n\) rows of \(X\) to give:
\[
\mathcal{L}(Xv_0' , \varepsilon \sqrt{n}) \leq e^{-c_{\ref{compcomp}}' n}
\]
for some constants \(\varepsilon, c_{\ref{compcomp}}' \) depending only on \(\tilde{c}\). By taking \(\hat{c}_{\ref{compcomp}} = \min \left\{ c_{\ref{compcomp}}', \varepsilon \right\}\), and looking at concentration around \(0\), this tells us that
\[
\PP(\norm{Xv_0'} \leq \hat{c}_{\ref{compcomp}} \sqrt{n} ) \leq e^{-\hat{c}_{\ref{compcomp}}n}.
\]

Now we can condition on \(Xv_0'\), and assume for now that \(\norm{Xv_0'} \geq  \hat{c}_{\ref{compcomp}} \sqrt{n}\), so that \(\norm{\Sigma X v_0'} \geq L^{-2} \hat{c}_{\ref{compcomp}} \sqrt{n}\). Now, \(\Sigma Xv_0'/ \norm{\Sigma Xv_0'}\) is a unit vector, and the same argument yields
\begin{equation*}
\PP \left(\norm{Y^{\mathrm{T}} \left(\Sigma Xv_0'/\norm{\Sigma Xv_0'}\right)} \leq \hat{c}_{\ref{compcomp}} \sqrt{ \lceil(1-c_0)p \rceil} \right) \leq e^{-\hat{c}_{\ref{compcomp}} \lceil (1-c_0) p \rceil} 
\end{equation*}
(the factor of \( \lceil (1-c_0) p \rceil\) comes from the number of rows of \(Y^{\mathrm{T}}\)). Hence the following holds with probability at least \(1-( e^{-\hat{c}_{\ref{compcomp}} n} + e^{-\hat{c}_{\ref{compcomp}}(1-c_0) p})\):
\begin{align*}\label{smallsparsenorm-n}
   \norm{M^{\mathrm{T}} \Sigma M v_0 - \sigma^2 v_0} \geq \norm{Y^{\mathrm{T}} \Sigma Xv_0'} 
    &\geq (\hat{c}_{\ref{compcomp}} \sqrt{\lceil(1-c_0)p \rceil})( L^{-2} \hat{c}_{\ref{compcomp}} \sqrt{n}) \\
  \nonumber  &\geq \left(  L^{-2} \hat{c}_{\ref{compcomp}}^2 \sqrt{(1-c_0)} \sqrt{\lambda} \right) n \\
  \nonumber  &\geq \left( L^{-2} \hat{c}_{\ref{compcomp}}^2 \sqrt{1-c_0} \sqrt{c_{\ref{matrix_model}}} \right) n.
\end{align*}
We choose constants \(c_1, c_{\ref{compcomp}}\) sufficiently small so that 
\begin{equation}\label{eq:choose_bad_comp_constants}
    \left( L^{-2} \hat{c}_{\ref{compcomp}}^2 \sqrt{(1-c_0)} \sqrt{c_{\ref{matrix_model}}} \right)n \geq (c_{\ref{compcomp}} + 24c_1K^2L^2)n.
\end{equation}

Let us simplify things and observe that
\begin{equation*}
e^{-\hat{c}_{\ref{compcomp}} n} + e^{-\hat{c}_{\ref{compcomp}}(1-c_0) p} = e^{-\hat{c}_{\ref{compcomp}} n} + e^{-\hat{c}_{\ref{compcomp}}(1-c_0) \lambda n} \leq 2 e^{- \hat{c}_{\ref{compcomp}} (1 - c_0)\lambda n} \leq 2  e^{- \hat{c}_{\ref{compcomp}} c_{\ref{matrix_model}} n / 2},
\end{equation*}
so long as \(c_0 \leq 1 / 2\). 

Now let us consider equation \eqref{compeq-w0}. We decompose \(M\) as
\begin{equation*}
M=
\begin{pmatrix}
X \\ \hline Y
\end{pmatrix},
\end{equation*}
where \(X\) is a \(\lfloor c_0 n\rfloor \times p\) matrix and \(Y\) is a \( \lceil (1-c_0)n \rceil \times p\) matrix. Thus, we have that
\begin{equation}
\nonumber M M^{\mathrm{T}} w_0 - \sigma^2 \Sigma^{-1} w_0 = 
\left(\begin{array}{@{}c|c@{}}
    X X^{\mathrm{T}} & X Y^{\mathrm{T}} \\ \hline
    Y X^{\mathrm{T}} & Y Y^{\mathrm{T}}
  \end{array}\right)  
  \left(
  \begin{array}{c}
  w_{0 1} \\
  \vdots \\
  w_{0 \left\lfloor c_0 n\right\rfloor} \\
  0 \\
  \vdots \\
  0 
  \end{array} \right)
   - \sigma^2 \Sigma^{-1} \left(
  \begin{array}{c}
  w_{01} \\
  \vdots \\
  w_{0 \left\lfloor c_0 n\right\rfloor} \\
  0 \\
  \vdots \\
  0 
  \end{array} \right).
\end{equation}
If we look at the final \(\lceil(1-c_0)n\rceil\) coordinates of this equation, we see that
\begin{align*}
    \norm{M M^{\mathrm{T}} w_0 - \sigma^2 \Sigma^{-1}w_0} \geq \norm{Y X^{\mathrm{T}} w_0' - \sigma^2 P^{\perp} \Sigma^{-1} w_0},
\end{align*}
where \(P^{\perp}\) is the projection onto the final \(\left\lceil (1-c_0)n\right\rceil\) coordinates of \(\mathbb{R}^n\) and 
\[
w_0' := \left(
  \begin{array}{c}
  w_{01} \\
  \vdots \\
  w_{\left\lfloor c_0 n\right\rfloor} \\
  \end{array} \right).
\]
Note that \(w_0'\) is still a unit vector, as it is the projection of \(w_0\) onto its support. 

By the same argument as above, bounds on \(\mathcal{L}(X^{\mathrm{T}}w_0', \varepsilon)\) and the Tensorization Lemma (Lemma \ref{tensorization}) tell us that
\begin{equation*}
   \PP( \norm{X^{\mathrm{T}} w_0'} \leq \hat{c}_{\ref{compcomp}} \sqrt{p} ) \leq e^{- \hat{c}_{\ref{compcomp}} p}
\end{equation*}
for some constant \(\hat{\alpha}_{\ref{compcomp}}\) depending on \(m_4\).

Now, let us condition on the entries of \(X^{\mathrm{T}}\). The anti-concentration of
\[
   Y \left( X^{\mathrm{T}} w_0' / \norm{X^{\mathrm{T}} w_0'} \right)
\]
around the point \( \sigma^2 P^{\perp} \Sigma^{-1} w_0/ \norm{X^{\mathrm{T}} w_0'}\) yields that with probability at least \(1 - (e^{-\hat{c}_{\ref{compcomp}}p} + e^{-\hat{c}_{\ref{compcomp}} \lceil(1-c_0)\rceil n}) \geq 1 - 2 e^{-\hat{c}_{\ref{compcomp}} c_{\ref{matrix_model}} n / 2}\), we have that
\begin{align*}\label{smallsparsenorm-p} 
\norm{M M^{\mathrm{T}} u_0 - \sigma^2 \Sigma^{-1} w_0} &\geq \norm{YX^{\mathrm{T}} w_0' - \sigma^2 P^{\perp} \Sigma^{-1} w_0} \\  
\nonumber &\geq \left(\hat{c}_{\ref{compcomp}} \sqrt{ \lceil(1-c_0)p \rceil } \right)( \hat{c}_{\ref{compcomp}} \sqrt{n}) \\
\nonumber &\geq \left( \hat{c}_{\ref{compcomp}}^2 \sqrt{1-c_0} \sqrt{c_{\ref{matrix_model}}} \right) n.
\end{align*}

We may then choose \(c_1, c_{\ref{compcomp}}\) so that \eqref{eq:choose_bad_comp_constants} and the following inequality,
\begin{equation*}\label{eq:choose_comp_constants} 
    \left( \hat{c}_{\ref{compcomp}}^2 \sqrt{1-c_0} \sqrt{c_{\ref{matrix_model}}} \right) n \geq (L^2 c_{\ref{compcomp}} + 24c_{1} K^2 L^4 )n
\end{equation*}
both hold. With this choice of \(c_1, c_{\ref{compcomp}}\), we have that
\begin{align}
    \mathbb{P}\left( \left\|M^{\mathrm{T}}\Sigma M v_0 - \sigma^2 v_0\right\| \leq (c_{\ref{compcomp}} + 24 c_1 K^2L^2)n \wedge \mathcal{E}_K \right) \leq 2\exp(-\hat{c}_{\ref{compcomp}}c_{\ref{matrix_model}}n / 2)\label{eq:comp_prob_v_no_net},
\end{align}
and
\begin{align}
    \mathbb{P}\left( \left\|M M^{\mathrm{T}} w_0 - \sigma^2 \Sigma^{-1} w_0\right\| \leq (L^2c_{\ref{compcomp}} + 24 c_1 K^2L^{4})n \wedge \mathcal{E}_K \right) \leq 2\exp(-\hat{c}_{\ref{compcomp}}c_{\ref{matrix_model}}n / 2).   \label{eq:comp_prob_u_no_net}
\end{align}

Now, we union bound over \(\mathcal{M}\) or \(\mathcal{N}\) as appropriate. Let \(\mathcal{E}\) denote either the event in equation \eqref{compeq-n} or equation \eqref{compeq-p}. The event \(\mathcal{E}\) implies the event in equation \eqref{eq:comp_prob_v_no_net} or equation \eqref{eq:comp_prob_u_no_net}, for some \(v_0 \in \mathcal{M}\) or \(u_0 \in \mathcal{N}\). Using that \(\abs{\mathcal{N}},\abs{\mathcal{M}} \leq \left( \frac{9}{c_0 c_1} \right)^{c_0 n} \), we observe that
\begin{align*}
    \PP(\mathcal{E}) &\leq \left( \frac{9}{c_0 c_1} \right)^{c_0 n} \left( 2 e^{-\hat{c}_{\ref{compcomp}}c_{\ref{matrix_model}} n / 2} \right) \\
&\leq 2 e^{\hat{c}_{\ref{compcomp}}c_{\ref{matrix_model}} n / 4} e^{-\hat{c}_{\ref{compcomp}}c_{\ref{matrix_model}} n / 2} = 2 e^{-\hat{c}_{\ref{compcomp}}c_{\ref{matrix_model}} n / 4},
\end{align*}
by choosing \(c_0\) sufficiently small so that \((9/c_0c_1)^{c_0} \leq e^{\hat{c}_{\ref{compcomp}}c_{\ref{matrix_model}} / 4} \), concluding the proof.
\end{proof}

\begin{remark}\label{rem:mean non zero}
   Let us note on the modifications to this argument if \(M\) has entries with non-zero mean, which is a rank-1 perturbation. Write \(M = \Xi + \mu\), where \(\Xi\) is as in Assumption \ref{matrix_model} and \(\mu\) is the matrix of means. Note that for our results to be meaningful, \( \mathbb{P}(\mathcal{E}_K^{c}) \) must be small, for which we require \( \|\mu\| = O(\sqrt{n}) \). Then equation \eqref{compmeansum} reads as
\begin{equation*}
    (Xv_0')_{k} = \sum_{j=1}^{c_0 p} v_{0j} \xi_{kj} + \sum_{j=1}^{c_0 p} v_{0j} \mu_{kj}.
\end{equation*}
It then follows that
\begin{align*}
    \mathcal{L}((Xv_0')_k, 1 / 2) &= \sup_{a \in \RR} \mathbb{P}\set{ \norm{(Xv_0')_k - a} \leq 1/2} \\
    & \leq \sup_{a \in \RR} \mathbb{P}\set{ \norm{ \sum_{j=1}^{c_0 p} v_{0j} \xi_{kj} - \left(a - \sum_{j=1}^{c_0 p} v_{0j} \mu_{kj}\right)} \leq 1/2} \\
    &\leq \mathcal{L}\left(\sum_{j=1}^{c_0 p} v_{0j} \xi_{kj}, 1 / 2 \right),
\end{align*}
as the supremum over \(a\) is equivalent to taking the supremum over \((a - \sum_{j=1}^{c_0 p} v_{0j} \mu_{kj})\). Thus, in the case that \(\mu \neq 0\), we can bound all small ball probabilities by the corresponding small ball probability when \(\mu = 0\), and so the argument proceeds as in the \(\mu=0\) case. All subsequent arguments involving small ball probabilities throughout the paper (lines \eqref{incompmeansumB}, \eqref{incompmeansumC} and \eqref{eq:lastcolumnmeansum} and their subsequent lines \eqref{incompmeanshiftB}, \eqref{incompmeanshiftC} and \eqref{eq:lastcolumnmeanshift}) proceed in an identical way to the \(\mu \neq 0\) case.
\end{remark}
For the remainder of the paper, whenever \(c_0\) and \(c_1\) appear they will be treated as fixed constants, chosen so that Lemma \ref{compcomp} holds.
Now, by applying a strategy similar to \cite[Lemma 5.3]{nguyentaovugaps}, that is, by approximating potential (squared) singular values using a multiple of \(c_{\ref{compcomp}} n\) in the interval \([0,L^2K^2n]\), we use Lemma \ref{compcomp} to prove the following:

\begin{proposition}\label{nocompressible}
    Let \(M\) and \(\Sigma\) be as in Assumptions \ref{matrix_model} and \ref{opnorm-assumptions}. Let \(\mathcal{G}\) be the event that there exists a positive \(\sigma^2\) and  unit vectors \(v\) and \(u\) such that
    \begin{equation*}\label{sveqs} 
       \left(  M^{\mathrm{T}}\Sigma Mv = \sigma^2 v \right) \text{ and } \left( \Sigma^{1 / 2}M M^{\mathrm{T}}\Sigma^{1 / 2}u = \sigma^2 u \right)
    \end{equation*}
    and either \(v \in \comp_p(c_0,c_1)\) or \(\Sigma^{1/2} u/ \norm{\Sigma^{1/2}u} \in \comp_n(c_0,c_1)\). 
    Then 
    \begin{equation*}
        \mathbb{P}(\mathcal{G} \wedge \mathcal{E}_K) \leq 2 e^{-c_{\ref{nocompressible}} n}
    \end{equation*}
    for some constant \(c_{\ref{nocompressible}}\) which depends on \(c_0, c_1, c_{\ref{matrix_model}},K,m_4\) and \(L\).
\end{proposition}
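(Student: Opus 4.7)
The plan is to reduce Proposition \ref{nocompressible} to Lemma \ref{compcomp} by discretizing the possible values of $\sigma^2$. On the event $\mathcal{E}_K$, since $\|M\| \leq K\sqrt{n}$ and $\|\Sigma\| \leq L^2$, any singular value must satisfy $\sigma^2 \leq \|M^T \Sigma M\| \leq L^2 K^2 n$ (and likewise $\sigma^2 \leq \|\Sigma^{1/2} M M^T \Sigma^{1/2}\| \leq L^2 K^2 n$ on the $v'$ side). So $\sigma^2$ lies in the bounded interval $[0, L^2 K^2 n]$.

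Next I would take a uniform grid $\tau_k = k c_{\ref{compcomp}} n /2$ for $k = 1, 2, \ldots, N_0$, with $N_0 := \lceil 2 L^2 K^2 / c_{\ref{compcomp}} \rceil$. Crucially, $N_0 = O(1)$ because $c_{\ref{compcomp}}, K, L$ are absolute constants. For every $\sigma^2 \in [0, L^2 K^2 n]$ there exists some $k \in \{1, \ldots, N_0\}$ with $|\sigma^2 - \tau_k| \leq c_{\ref{compcomp}} n / 2$, and each $\tau_k > 0$ so Lemma \ref{compcomp} applies at $\sigma = \sqrt{\tau_k}$.

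Now suppose $\mathcal{G} \wedge \mathcal{E}_K$ occurs. If the compressible vector is $u$, then $M^T \Sigma M u = \sigma^2 u$ gives, for $u$ a unit vector and the associated grid point $\tau_k$,
\[
\|M^T \Sigma M u - \tau_k u\| = |\sigma^2 - \tau_k| \leq c_{\ref{compcomp}} n / 2 \leq c_{\ref{compcomp}} n,
\]
so the event in \eqref{compeq-n} holds for that particular $\tau_k$. Union-bounding over $k$ and invoking Lemma \ref{compcomp} yields
\[
\mathbb{P}\!\set{\exists\, u \in \comp_p(c_0,c_1),\ \exists\, k: \|M^T\Sigma M u - \tau_k u\| \leq c_{\ref{compcomp}} n \wedge \mathcal{E}_K} \leq N_0 C_{\ref{compcomp}} e^{-\alpha_{\ref{compcomp}} n}.
\]
The analogous computation on the $v'$ side uses $\Sigma^{1/2} M M^T \Sigma^{1/2} v' = \sigma^2 v'$, the approximation $\|\Sigma^{1/2} M M^T \Sigma^{1/2} v' - \tau_k v'\| \leq c_{\ref{compcomp}} n$, and equation \eqref{compeq-p} of Lemma \ref{compcomp}.

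Finally, combining the $u$ and $v'$ cases via one more union bound gives
\[
\mathbb{P}(\mathcal{G} \wedge \mathcal{E}_K) \leq 2 N_0 C_{\ref{compcomp}} e^{-\alpha_{\ref{compcomp}} n},
\]
which is of the required form with $C_{\ref{nocompressible}} = 2 N_0 C_{\ref{compcomp}}$ and $c_{\ref{nocompressible}} = \alpha_{\ref{compcomp}}$. The only thing to be careful about is that the grid points $\tau_k$ remain positive (so Lemma \ref{compcomp}'s hypothesis $\sigma^2 > 0$ is met) and that the number of grid points stays bounded independent of $n$ so the union bound does not destroy the exponential decay; both are handled by starting the grid at $k=1$ and choosing spacing proportional to $n$. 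There is no real technical obstacle here — the content is entirely in Lemma \ref{compcomp}, and this step is just a standard net argument on the scalar parameter $\sigma^2$.
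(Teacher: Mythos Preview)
Your proposal is correct and follows essentially the same approach as the paper: discretize $\sigma^2$ on $[0,L^2K^2n]$ with spacing proportional to $c_{\ref{compcomp}}n$, approximate the eigenvector equation at the nearest grid point, apply Lemma \ref{compcomp} at each grid value, and union bound over the $O(1)$ many grid points and the two cases $u$ and $v'$. Your extra care in starting the grid at $k=1$ so that $\tau_k>0$ (needed for the hypothesis of Lemma \ref{compcomp}) is a detail the paper glosses over.
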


\begin{proof}
We use a net argument to reduce the argument to fixed $\sigma^2$.  Assume that there is a compressible right singular vector \(v\) with singular value \(\sigma^2\). Also assume that \(\mathcal{E}_K\) applies, so that \(\norm{M^{\mathrm{T}}\Sigma M}, \norm{\Sigma^{1 / 2}M M^{\mathrm{T}}\Sigma^{1 / 2}} \leq L^2K^2 n \). Let \(\mathcal{N}\) be the set of non-zero multiples of \(c_{\ref{compcomp}}n\) in the interval \([0, L^2K^2 n]\) (so that \(|\mathcal{N}| \leq \left\lfloor L^2K^2/c_{\ref{compcomp}}\right\rfloor\) ). The event \(\mathcal{G} \wedge \mathcal{E}_K\) implies that there exists \(\sigma_0^2 \in \mathcal{N}\) such that
\begin{align}\label{approxSV}
    \norm{M^{\mathrm{T}}\Sigma M v - \sigma_0^2 v} &= \abs{\sigma^2 - \sigma_0^2} \norm{v} \leq c_{\ref{compcomp}} n.
\end{align}
By Lemma \ref{compcomp}, this occurs (for any such \(v\)) with probability less than or equal to \( 2 e^{-c_{\ref{compcomp}}n}\). Union bounding over all possible values of \(\sigma_0^2 \in \mathcal{N}\) gives
 \begin{align*}
\PP &\set{ \exists \text{ a right singular vector \(v\) of \(\Sigma^{1/2} M\) such that \(v \in \comp_p(c_0,c_1)\)}\wedge \mathcal{E}_K}\\
&\leq \frac{L^2K^2}{c_{\ref{compcomp}}} 2 e^{-c_{\ref{compcomp}} n}.
\end{align*}
A similar calculation yields the other inequality,
\begin{align*}
\PP &\set{ \exists \text{ a left singular vector \(u\) of \(\Sigma^{1/2}M\) s.t. \(\frac{\Sigma^{1/2}u}{\left\|\Sigma^{1/2}u \right\|} \in \comp_{n}(c_0,c_1)\)}\wedge \mathcal{E}_K} \\
    &\leq \frac{L^2K^2}{c_{\ref{compcomp}}} 2 e^{-c_{\ref{compcomp}} n}.
\end{align*}
Summing these two bounds and choosing \(c_{\ref{nocompressible}}\) sufficiently small so that 
\[
\min \left\{ \frac{4L^2K^2}{c_{\ref{compcomp}}} e^{-c_{\ref{compcomp}} n}, 1 \right\} \leq 2e^{-c_{\ref{nocompressible}}n} 
\]
for all \(n\) yields the result. In future proofs, we will perform this final step of replacing any leading constant in probability bounds with the constant \(2\) without comment.
\end{proof}

\section{Incompressible singular vectors and regularized LCD} \label{incompressible section}

Having shown that there is only a small probability that any singular vector is compressible, in this section we turn our attention to the incompressible vectors. Our ultimate goal is to show that singular vectors are sufficiently disordered, so that when their dot product is taken with a column of our random matrix \(M\) the resulting quantity has strong anti-concentration properties. We then use this to give a lower bound on the gap size between squared singular values.

Compressible vectors give relatively weak anti-concentration bounds. However, this may also be the case for an incompressible vector such that its coordinates are close to having additive structure. For example, the vector \(\frac{1}{\sqrt{n}}(1,\dots,1)\) has relatively weak anti-concentration, due to the potential of many cancellations occurring if we were, for example, to take its dot product with a Rademacher random vector. Therefore, we wish to show that a singular vector is unlikely to have such structure. The key tool for detecting the extent to which a particular vector has strong anti-concentration bounds is the \textit{least common denominator}, or \(LCD\). In this case, we use the so-called \textit{regularized LCD}, which is more robust under the types of decomposition arguments used during our proofs. At the end of this section, we use a somewhat technical \(\varepsilon\)-net argument to demonstrate that in general, singular vectors have sufficiently large regularized LCD with high probability. 
  
\subsection{Small ball probabilities via LCD} 

We use the definition of \textit{least common denominator} (LCD) as in \cite{nguyentaovugaps}, a concept originally introduced by Rudelson and Vershynin (see \cite{rudelson2008littlewoodoffordprobleminvertibilityrandom}, \cite{rudelson2009smallestsingularvaluerandom}). 
We begin by including the relevant concepts and notation from \cite{nguyentaovugaps} needed for our argument. Given parameters \(\kappa,\gamma\) with \(\gamma \in (0,1)\) and \(\kappa>0\), for a unit vector \(x \in S^{m-1} \):
\[
\LCD_{\kappa,\gamma}(x) := \inf \left\{ \theta > 0 \colon \text{dist}(\theta x, \mathbb{Z}^m) < \min \left( \gamma \norm{\theta x}, \kappa \right) \right\},
\]
where \(\kappa\) and \(\gamma\) can potentially depend on \(m\). We primarily utilize the related notion of \textit{regularized LCD} (originally introduced by Vershynin in \cite{VershyninInvert}), which is defined in \cite{nguyentaovugaps} as
\[
\widehat{\LCD}_{\kappa,\gamma}(x,\alpha) = \max \left\{ \LCD_{\kappa,\gamma} (x_{I}/\norm{x_{I}}) \colon I \subset \text{spread}(x), \abs{I} = \ceil{\alpha n} \right\} ,
\]
where \(x \in \incomp_m(c_0,c_1)\), and \(\text{spread}(x)\) is a subset of the indices of \(\{1, \ldots, m\}\) such that 
\begin{equation}\label{eq:well spread}
\frac{c_1}{\sqrt{2m}} \leq \abs{x_k} \leq \frac{1}{\sqrt{c_0 m}}
\end{equation}
for every \(k \in \text{spread}(x)\), and 
\begin{equation} \label{eq:c'} \abs{\text{spread}(x)} = \ceil{c'm} \text{ where } c' = c_0 c_1^2/4. \end{equation} Note that such a subset \(\text{spread}(x)\) of the specified cardinality exists by the definition of incompressibility (for a proof, see \cite{rudelson2008littlewoodoffordprobleminvertibilityrandom} Lemma 3.4).
The parameter \(\alpha\) satisfies \(0 < \alpha < c'/4\), where \(\alpha\) may be chosen to depend on \(m\).

Note that for the purpose of proving our main results we will only need the case where \(\gamma\) in the definition of LCD is a constant, which we choose to be \(1/2\), and we shall set \(\kappa = n^{2c}\), where \(c\) is a small constant appearing in Theorem \ref{thm:main}. All constants in our results will be independent of both \(\gamma\) and \(\kappa\). 

The importance of regularized LCD is its relation to small ball probabilities.

\begin{lemma}[\cite{nguyentaovugaps} Lemma 5.8] \label{incompanticonc}
Assume that \(\xi_1, \dots \xi_m\) are i.i.d. (real-valued) random variables and let \(\xi = (\xi_1,\dots , \xi_m)\). Assume also that there exist \(\varepsilon_0, p_0 > 0\) such that \(\mathcal{L}(\xi_1,\varepsilon_0) \leq 1 - p_0\). If \(x \in \incomp_m(c_0,c_1)\), then for \(\kappa > 0\) and any \(\varepsilon\) which satisfies
\begin{equation*}\label{eq:anti-concentration_single} 
\varepsilon \geq \frac{\sqrt{\alpha}}{c_0 \widehat{\LCD}_{\kappa,\gamma}(x,\alpha)}
\end{equation*}
there exists a constant \(C_{\ref{incompanticonc}} > 0\) depending only on \(\varepsilon_0\) and \(p_{0}\) such that
\begin{equation*}\label{regularizedsmallball}
\mathcal{L}(\xi \cdot x,\varepsilon) \leq C_{\ref{incompanticonc}} \left( \frac{\varepsilon}{\gamma c_1 \sqrt{\alpha}} + e^{- \Omega(\kappa^2)} \right).
\end{equation*}
\end{lemma}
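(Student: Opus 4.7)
The plan is to reduce the small ball problem to the classical (non-regularized) LCD of a well-chosen subvector, and then invoke Esseen's inequality to convert a distance-to-integer estimate into a concentration bound. First, by definition of $\widehat{\LCD}_{\kappa,\gamma}(x,\alpha)$, pick a subset $I \subset \text{spread}(x)$ of size $\lceil \alpha m \rceil$ at which the maximum is attained and set $y = x_I/\|x_I\|$, so that $D := \LCD_{\kappa,\gamma}(y) = \widehat{\LCD}_{\kappa,\gamma}(x,\alpha)$. The definition of $\text{spread}(x)$ yields the two-sided control $c_1\sqrt{\alpha/2} \leq \|x_I\| \leq \sqrt{\alpha/c_0}$. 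Conditioning on the coordinates $\xi_j$ for $j \notin I$ (which at worst shifts the center in the $\sup_a$ defining $\mathcal{L}$) reduces the problem to
\[
\mathcal{L}(\xi \cdot x, \varepsilon) \;\leq\; \mathcal{L}\bigl(\xi_I \cdot y,\ \varepsilon/\|x_I\|\bigr).
\]

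Set $\varepsilon' = \varepsilon/\|x_I\|$ and $S = \xi_I \cdot y$. Esseen's inequality gives
\[
\mathcal{L}(S, \varepsilon') \;\leq\; C\varepsilon' \int_{-1/\varepsilon'}^{1/\varepsilon'} |\phi_S(t)|\,dt,
\]
and independence gives $\phi_S(t) = \prod_{j \in I} \phi_{\xi_j}(ty_j)$. For each factor, the standard symmetrization $\bar\xi_j = \xi_j - \xi_j'$ yields $|\phi_{\xi_j}(s)|^2 \leq \mathbb{E}[\cos(2\pi s \bar\xi_j)]$. Using the hypothesis $\mathcal{L}(\xi_j, \varepsilon_0) \leq 1 - p_0$ (which forces a uniformly positive probability of $|\bar\xi_j| \geq \varepsilon_0$) together with the elementary inequality $1 - \cos(2\pi\theta) \geq 8\,\mathrm{dist}(\theta,\mathbb{Z})^2$, and truncating $|\bar\xi_j|$ to a bounded window, one obtains a pointwise estimate of the form $|\phi_{\xi_j}(s)| \leq \exp(-c\,\mathrm{dist}(s,\mathbb{Z})^2)$ with $c > 0$ depending only on $\varepsilon_0, p_0$ (hence on $m_4$). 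Multiplying across $j \in I$ yields $|\phi_S(t)| \leq \exp(-c\,\mathrm{dist}(ty,\mathbb{Z}^{|I|})^2)$.

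The hypothesis $\varepsilon \geq \sqrt{\alpha}/(c_0 D)$ together with $\|x_I\| \leq \sqrt{\alpha/c_0}$ forces $1/\varepsilon' \leq D$, so the entire integration range is contained in $[-D,D]$. Since $\|y\|=1$, the defining property of $D = \LCD_{\kappa,\gamma}(y)$ gives $\mathrm{dist}(ty, \mathbb{Z}^{|I|}) \geq \min(\gamma|t|, \kappa)$ throughout the integration range. Splitting at $|t| = \kappa/\gamma$: on $|t| \leq \kappa/\gamma$ the integrand is bounded by $\exp(-c\gamma^2 t^2)$, contributing $O(1/\gamma)$ to the integral; on $\kappa/\gamma \leq |t| \leq 1/\varepsilon'$ the integrand is bounded by $\exp(-c\kappa^2)$, contributing $O(\exp(-c\kappa^2)/\varepsilon')$. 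Multiplying back by $\varepsilon'$ produces the bound $\mathcal{L}(S, \varepsilon') \leq C(\varepsilon'/\gamma + \exp(-c\kappa^2))$, and substituting $\varepsilon' \leq \sqrt{2}\,\varepsilon/(c_1\sqrt{\alpha})$ delivers the stated estimate.

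The main obstacle will be the step from the abstract assumption $\mathcal{L}(\xi_j, \varepsilon_0) \leq 1 - p_0$ to the uniform characteristic function estimate $|\phi_{\xi_j}(s)| \leq \exp(-c\,\mathrm{dist}(s,\mathbb{Z})^2)$. Because the symmetrized variable is unbounded, one must truncate $\bar\xi_j$ to an interval of size comparable to $1/\varepsilon_0$ before the identification of $\sin^2(\pi s \bar\xi_j)$ with $\mathrm{dist}(s \bar\xi_j, \mathbb{Z})^2$ (up to constants) is available, and then extract from $\mathcal{L}(\xi_j,\varepsilon_0) \leq 1 - p_0$ a quantitative lower bound on the probability that $\bar\xi_j$ lies in this truncation window and is bounded away from zero. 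This is a standard but delicate Paley-Zygmund style estimate whose constants ultimately determine $C_{\ref{incompanticonc}}$.
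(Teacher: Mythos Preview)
The paper does not prove this lemma at all: it is quoted from \cite{nguyentaovugaps}, with a one-line remark that replacing \cite[Lemma 5.5]{nguyentaovugaps} by \cite[Theorem 3.4]{rudelson2009smallestsingularvaluerandom} removes the subgaussian hypothesis. Your proposal is, in effect, a reconstruction of the proof of that cited result (reduce to the subvector $y=x_I/\|x_I\|$ realizing the regularized LCD, then run Esseen plus a characteristic-function bound governed by $\LCD_{\kappa,\gamma}(y)$). So strategically you are aligned with what the paper invokes.

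There is, however, a genuine error in your characteristic-function step. The single-factor estimate you target,
\[
|\phi_{\xi_j}(s)| \leq \exp\bigl(-c\,\mathrm{dist}(s,\mathbb{Z})^2\bigr),
\]
is false in general: for Rademacher $\xi_j\in\{\pm1\}$ one has $\phi_{\xi_j}(s)=\cos s$, which has $|\phi_{\xi_j}(\pi)|=1$ while $\mathrm{dist}(\pi,\mathbb{Z})>0$. The symmetrized variable $\bar\xi_j$ cannot be eliminated from the distance. What actually comes out of symmetrization plus truncation to a window $|\bar\xi_j|\in[a,b]$ is
\[
|\phi_{\xi_j}(ty_j)|^2 \leq \exp\Bigl(-c\,\mathbb{E}\bigl[\mathrm{dist}(ty_j\bar\xi_j,\mathbb{Z})^2\,\mathbf{1}_{|\bar\xi_j|\in[a,b]}\bigr]\Bigr),
\]
and after multiplying,
\[
|\phi_S(t)| \leq \exp\Bigl(-c\,\mathbb{E}\bigl[\mathrm{dist}(t\bar\xi\,y,\mathbb{Z}^{|I|})^2\,\mathbf{1}_{|\bar\xi|\in[a,b]}\bigr]\Bigr).
\]
Now the LCD lower bound applies to $\theta=t\bar\xi$, so you need $|t\bar\xi|<D$, i.e.\ the Esseen integration range must be cut to $|t|\leq D/b$ rather than $|t|\leq D$; correspondingly the hypothesis on $\varepsilon$ must absorb the constant $b$ (which it does, since $b$ depends only on $\varepsilon_0,p_0$). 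With this correction the split at $|t|=\kappa/(\gamma a)$ and the rest of your computation go through. This is precisely the content of \cite[Theorem 3.4]{rudelson2009smallestsingularvaluerandom}, which is why the paper simply cites it.
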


\begin{remark}
    While \cite[Lemma 5.8]{nguyentaovugaps} is stated for subgaussian \( \xi_{i} \), by replacing \cite[Lemma 5.5]{nguyentaovugaps} in the proof of \cite[Lemma 5.8]{nguyentaovugaps} with \cite[Theorem 3.4]{rudelson2009smallestsingularvaluerandom}, we can replace the subgaussian assumption with the uniform anti-concentration assumption \( \mathcal{L}( \xi_{i}, \varepsilon_{0}) \leq 1-p_{0} \).
\end{remark}
\begin{remark}
       For the random variables in Lemma \ref{PZ}, \(\varepsilon_0, p_0\) exist by the assumption that \(\Var \xi = 1, m_4 < \infty\). Furthermore, they can be chosen to only depend on \(m_4\). 
\end{remark}

We use the above lemma to give a bound on the simultaneous anti-concentration of \(\Sigma^{1 / 2}M v \) and \(M^{\mathrm{T}} \Sigma^{1 / 2} u\). In the case of incompressible singular vectors, it is more convenient to work with the intersection of events 
\[\set{\Sigma^{1/2}M v = \sigma u} \wedge \set{M^{\mathrm{T}} \Sigma^{1/2} u  = \sigma v}\]
(i.e. the singular vector equations for \(\Sigma^{1/2}M\)) instead of the individual events
\[\set{M^{\mathrm{T}} \Sigma M v = \sigma^2 v}
\]
or
\[\set{\Sigma^{1/2} M M^{\mathrm{T}} \Sigma^{1/2} u = \sigma^2 u},\]
(i.e., the eigenvalue equations for \(M^{\mathrm{T}} \Sigma M\) and for \(\Sigma^{1/2} M M^{\mathrm{T}} \Sigma^{1/2}\)), as the required anti-concentration results are difficult to apply to matrices of the form \(M^{\mathrm{T}} \Sigma M\) due to the lack of independent rows and columns. 

The next proposition is stated without reference to \(\Sigma\). It bounds the probability that any fixed pair \((x,y)\) of incompressible vectors are \textit{approximate} singular vectors, with the level of approximation dependent on the regularized LCD of \(x\) and \(y\). After proving this proposition, we employ a net argument in the next section, at which point we manipulate the inequalities appearing in the event in \(\eqref{anticoncresult}\) into a form that accounts for the covariance matrix \(\Sigma\).

\begin{lemma}\label{anti-concentration argument}
Let the matrix \(M\) satisfy Assumption \ref{matrix_model}. 
Then there exists a constant \(C_{\ref{anti-concentration argument}}\) (depending only on \(m_4\)) such that for any \(x \in \incomp_p(c_0,c_1)\), \(y \in \incomp_n(c_0,c_1)\), \(0 <\alpha < c'/4 \) (where \(c'\) was defined in \eqref{eq:c'}), and \(\varepsilon, \varepsilon' > 0 \) satisfying
\begin{equation}\label{eq:epsilon geq LCD}
\varepsilon \geq \frac{\sqrt{\alpha}}{c_0 \widehat{\LCD}_{\kappa,\gamma}(x,\alpha)}, \qquad \varepsilon' \geq \frac{\sqrt{\alpha}}{c_0 \widehat{\LCD}_{\kappa,\gamma}(y,\alpha)},
\end{equation}
we have the following:
\begin{align}
\begin{split}
    \sup_{(a,b) \in \RR^n \times \RR^p} &\PP(\set{\norm{M x - a} \leq \varepsilon \sqrt{n}} \wedge \set{\norm{M^{\mathrm{T}}y - b} \leq \varepsilon' \sqrt{p}}) \label{anticoncresult} \\
    &\leq C_{\ref{anti-concentration argument}}^{n+p} \left( \frac{\sqrt{2}\varepsilon}{\gamma c_1 \sqrt{\alpha}} + e^{- \Omega(\kappa^2)} \right)^{n - \ceil{\alpha n}} \left( \frac{\sqrt{2}\varepsilon'}{\gamma c_1 \sqrt{\alpha}} + e^{- \Omega(\kappa^2)} \right)^{p - \ceil{\alpha p}}.
\end{split}
\end{align}

\end{lemma}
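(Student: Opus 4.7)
The plan is to decouple the two simultaneous small-ball events by conditioning on a carefully chosen sub-block of $M$, so that the remaining randomness splits into two independent pieces—one controlling $Mx$ and the other controlling $M^T y$. Let $I_x \subseteq \mathrm{spread}(x)$ with $|I_x| = \lceil \alpha p \rceil$ be a subset realizing the regularized LCD of $x$, i.e., $\LCD_{\kappa,\gamma}(x_{I_x}/\|x_{I_x}\|) = \widehat{\LCD}_{\kappa,\gamma}(x,\alpha)$, and define $I_y \subseteq \mathrm{spread}(y)$ with $|I_y| = \lceil \alpha n \rceil$ analogously for $y$. Partition the entries of $M$ into four blocks according to whether the row index lies in $I_y$ or $I_y^c$ and the column index lies in $I_x$ or $I_x^c$, and condition on the block $M[I_y^c, I_x^c]$.

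Under this conditioning, for each $i \in I_y^c$ we may decompose
\[
(Mx)_i - a_i \;=\; \Bigl(\textstyle\sum_{j \in I_x^c} M_{ij} x_j - a_i\Bigr) + \sum_{j \in I_x} M_{ij} x_j,
\]
with the first bracketed term fixed, and analogously for $(M^T y)_j - b_j$ with $j \in I_x^c$. The event $\|Mx-a\| \leq \varepsilon\sqrt{n}$ forces $\|(Mx)_{I_y^c} - a_{I_y^c}\| \leq \varepsilon\sqrt{n}$, which after conditioning depends only on the surviving block $M[I_y^c, I_x]$; likewise the reduction of the $M^T y$ event depends only on the block $M[I_y, I_x^c]$. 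These two blocks share no entries (their row-index sets are disjoint) and neither overlaps the conditioned block, so they are mutually independent. Hence the two reduced events are conditionally independent and their joint probability factorizes. To estimate each factor, apply Lemma \ref{incompanticonc} via the remark following it (taking $J = I_x$): for each $i \in I_y^c$ and any shift $a_i'$,
\[
\mathbb{P}\Bigl(\bigl|\textstyle\sum_{j \in I_x} M_{ij} x_j - a_i'\bigr| \leq \varepsilon\Bigr) \;\leq\; C_{\ref{incompanticonc}}\Bigl(\frac{\varepsilon}{\gamma c_1 \sqrt{\alpha}} + e^{-\Theta(\kappa^2)}\Bigr),
\]
and an analogous bound holds on the $y$-side.

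Since the rows $\{(M_{ij})_{j \in I_x}\}_{i \in I_y^c}$ are mutually independent (they lie in distinct rows of $M[I_y^c, I_x]$), a quantitative form of the Tensorization Lemma converts these coordinate-wise small-ball bounds into a bound on $\|(Mx)_{I_y^c} - a_{I_y^c}'\|$ across all $|I_y^c| = n - \lceil \alpha n \rceil$ surviving coordinates. Because $\alpha < c'/4 < 1/2$, one has $|I_y^c| \geq n/2$ and therefore $\varepsilon\sqrt{n} \leq \sqrt{2}\,\varepsilon\sqrt{|I_y^c|}$; invoking tensorization at the level $\sqrt{2}\,\varepsilon\sqrt{|I_y^c|}$ passes a factor of $\sqrt{2}$ into the numerator and produces the desired $\bigl(\sqrt{2}\varepsilon/(\gamma c_1 \sqrt{\alpha}) + e^{-\Theta(\kappa^2)}\bigr)^{n - \lceil \alpha n \rceil}$ factor. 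Repeating the argument on the $M^T y$ side and multiplying via conditional independence yields the claim, uniformly in $(a,b)$ since the per-coordinate bound already takes a supremum over shifts. The chief obstacle, and the main departure from the standard Rudelson--Vershynin scheme for square invertibility, is identifying a single conditioning block $M[I_y^c, I_x^c]$ that simultaneously (i) decouples the two events into independent components and (ii) leaves each surviving block supported on precisely the LCD-achieving coordinates where Lemma \ref{incompanticonc} can be invoked.
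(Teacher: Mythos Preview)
Your proposal is correct and follows essentially the same approach as the paper: the paper likewise selects the LCD-achieving index sets $I(x)$, $I(y)$, decomposes $M$ into four blocks, conditions on the $I_y^c \times I_x^c$ block, and uses the independence of the $I_y^c \times I_x$ and $I_y \times I_x^c$ blocks to factorize the joint event before applying Lemma~\ref{incompanticonc} coordinate-wise and tensorizing (with the same $\sqrt{2}$ adjustment from $|I_y^c| \geq n/2$). The only cosmetic difference is notation.
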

\begin{proof} First, let \(I(x) \subset \set{1,\dots , p}\) be coordinates of \(x\) where \(\widehat{\LCD}_{\kappa,\gamma}(x,\alpha)\) is achieved, and \(I(y) \subset \set{1,\dots, n}\) be coordinates of \(y\) where \(\widehat{\LCD}_{\kappa,\gamma}(y,\alpha)\) is achieved.

Fix \(a \in \RR^n\), \(b \in \RR^p\). Without loss of generality (by a permutation of the rows and columns of \( M \), under which the law of \( M \) is invariant), we may assume that \( I(x) = \left\{p - \left\lceil \alpha p\right\rceil + 1, ..., p\right\}\) and that \( I(y) = \left\{n - \left\lceil \alpha n\right\rceil + 1, ..., n\right\} \). This assumption does not change our argument, it simply allows for a nice presentation of \(M\) as 
\begin{equation*}
M = \begin{pmatrix}
A & B \\
C & D 
\end{pmatrix},
\end{equation*}
where \(A\) is an \(|[n]\setminus I(y)| \times |[p] \setminus I(x)|\) matrix, \(B\) is \(|[n]\setminus I(y)| \times |I(x)|\) and \(C^{\mathrm{T}}\) is \(|[p]\setminus I(x)| \times |I(y)|\). We also write 
\begin{align*}
x = \begin{pmatrix}
x' \\ x''
\end{pmatrix},
b= \begin{pmatrix}
b' \\ b''
\end{pmatrix}, \quad \text{ and } \quad
y = \begin{pmatrix}
y' \\ y''
\end{pmatrix},
b= \begin{pmatrix}
a' \\ a''
\end{pmatrix},
\end{align*}
where \(x',b' \in \RR^{|[p]\setminus I(x)|}\), \(x'',b'' \in \RR^{|I(x)|}\) and \(y',a' \in \RR^{|[n]\setminus I(y)|}\), \(y'',a'' \in \RR^{|I(y)|}\). 
The idea of our argument is to use the independence of the blocks \(B\) and \(C\) to `factor' the event in \eqref{anticoncresult}, only losing \(\left\lceil \alpha n\right\rceil\) and \(\left\lceil \alpha p\right\rceil\) in the exponents, due to \(B\) having \(n - \lceil \alpha n \rceil\) rows and \(C^{\mathrm{T}}\) having \(p - \lceil \alpha p \rceil\) rows. 

Writing things out in terms of our decomposition, we see that
\begin{align*}
M x  - a &= \begin{pmatrix}
A & B \\
C & D
\end{pmatrix}
\begin{pmatrix}
x' \\
x''
\end{pmatrix}
-
\begin{pmatrix}
a' \\
a''
\end{pmatrix},
\\
M^{\mathrm{T}} y - b &= \begin{pmatrix}
A^{\mathrm{T}} & C^{\mathrm{T}} \\
B^{\mathrm{T}} & D^{\mathrm{T}}
\end{pmatrix}
\begin{pmatrix}
y' \\
y''
\end{pmatrix}
-
\begin{pmatrix}
b' \\
b''
\end{pmatrix}.
\end{align*}
It therefore follows that
\begin{align}
\begin{split}\label{decompnormstart}
    \norm{Mx-a}^2 
    &= \norm{Ax' + Bx'' - a'}^2 + \norm{Cx'+Dx''-a''}^2 \\
    &\geq \norm{Ax' + Bx'' - a'}^2,
\end{split}\\
\begin{split}\label{decompnormend}
    \norm{M^{\mathrm{T}} y- b}^2 
    &= \norm{A^{\mathrm{T}} y' +C^{\mathrm{T}} y'' -b'}^2 + \norm{B^{\mathrm{T}} y' + D^{\mathrm{T}} y'' - b''}^2  \\
    &\geq \norm{A^{\mathrm{T}} y' +C^{\mathrm{T}} y'' -b'}^2  .
\end{split}
\end{align}

Let us now condition on the entries of \(A\). Letting \(a_0 := a'  - A x'\) and \(b_0 := b' - A^{\mathrm{T}} y'\), which are now both fixed vectors, we see that \eqref{decompnormstart} and \eqref{decompnormend} read as
\begin{align*}
\norm{Mx - a} &\geq \norm{B x'' - a_0}, \\
\norm{M^{\mathrm{T}}y - b} &\geq \norm{C^{\mathrm{T}} y'' - b_0}.
\end{align*}

Now let \(\mathcal{B}_a = \set{\norm{B x'' - a_0} \leq \varepsilon \sqrt{n}}\) and \(\mathcal{C}_b= \set{\norm{C^{\mathrm{T}} y'' - b_0} \leq \varepsilon' \sqrt{p}}\). Let \(\mathcal{E}_{a,b}= \set{\norm{Mx  - a} \leq \varepsilon \sqrt{n}} \wedge \set{\norm{M^{\mathrm{T}} y -  b} \leq \varepsilon' \sqrt{p}}\). We have shown that
\begin{equation*}
\mathcal{E}_{a,b} \subset \mathcal{B}_a \wedge \mathcal{C}_b,
\end{equation*}
and therefore
\begin{equation*}
\PP(\mathcal{E}_{a,b}) \leq \PP(\mathcal{B}_a) \PP(\mathcal{C}_b)
\end{equation*}
by independence of \(\mathcal{B}_a\) and \(\mathcal{C}_b\), which follows from independence of the entries of \(B\) from the entries of \(C\). Note that \(\mathcal{E}_{a,b}\) is the event in \eqref{anticoncresult} with fixed \(a\) and \(b\).   

From this we deduce that 
\begin{align*}
\sup_{(a,b) \in \RR^n \times \RR^p} \PP(\mathcal{E}_{a,b}) \leq& \sup_{(a,b) \in \RR^n \times \RR^p} \PP(\mathcal{B}_a) \PP(\mathcal{C}_b) = \sup_{a \in \RR^n} \PP(\mathcal{B}_a) \sup_{b \in \RR^p} \PP(\mathcal{C}_b)\\ &= \mathcal{L}(Bx'', \varepsilon \sqrt{n}) \mathcal{L}(C^{\mathrm{T}} y'', \varepsilon' \sqrt{p}).
\end{align*}
We therefore have that for all \(i \in [n] \setminus I(y)\)
\begin{align}\label{incompmeansumB}
(B x'')_i &= \sum_{j \in I(x)} \xi_{ij} x''_j
\end{align}
and for all \(i \in [p]\setminus I(x)\)
\begin{align}\label{incompmeansumC}
(C^{\mathrm{T}} y'')_i &= \sum_{j \in I(y)} \xi_{ji} y''_j.
\end{align}

We observe that provided \(\varepsilon, \varepsilon'\) satisfy \eqref{eq:epsilon geq LCD}, so do \(\sqrt{2}\varepsilon, \sqrt{2}\varepsilon'\), and so by applying Lemma \ref{incompanticonc}, we see that
\begin{equation}\label{incompmeanshiftB}
    \mathcal{L}((B x'')_i, \sqrt{2}\varepsilon) 
    \leq C_{\ref{incompanticonc}} \left( \frac{\sqrt{2}\varepsilon}{\gamma c_1 \sqrt{\alpha}} + e^{- \Omega(\kappa^2)}\right), \quad i \in \set{1,\dots n-\abs{I(y)})}
\end{equation}
and 
\begin{equation}\label{incompmeanshiftC}
    \mathcal{L}((C^{\mathrm{T}} y'')_i, \sqrt{2}\varepsilon') 
    \leq C_{\ref{incompanticonc}} \left(\frac{\sqrt{2}\varepsilon'}{\gamma c_1 \sqrt{\alpha}} + e^{- \Omega(\kappa^2)}\right), \quad i \in \set{1,\dots p-\abs{I(x)}},
\end{equation}
where \(C_{\ref{incompanticonc}}\) depends only on \(m_4\), as remarked after Lemma \ref{incompanticonc}.

Using \(\left| [n]\setminus I(y) \right| \geq n / 2\) and \(\left| [p] \setminus I(x)\right| \geq p / 2 \), and the Tensorization Lemma (Lemma \ref{tensorization}) this yields
\begin{equation*}
    \mathcal{L}(B x'', \varepsilon \sqrt{n} )
    \leq \mathcal{L}(B x'', \sqrt{2}\varepsilon \sqrt{\abs{n \setminus I(y)}}) 
    \leq C_{\ref{incompanticonc}}^n \left(\frac{\sqrt{2}\varepsilon}{\gamma c_1 \sqrt{\alpha}} + e^{- \Omega(\kappa^2)}\right)^{\abs{n \setminus I(y)}}
\end{equation*}
and
\begin{equation*}
    \mathcal{L}(C^{\mathrm{T}} y'', \varepsilon' \sqrt{p} ) 
    \leq \mathcal{L}(C^{\mathrm{T}} y'', \sqrt{2}\varepsilon' \sqrt{\abs{p \setminus I(x)}}) 
    \leq C_{\ref{incompanticonc}}^p \left( \frac{\sqrt{2}\varepsilon'}{\gamma c_1 \sqrt{\alpha}} + e^{- \Omega(\kappa^2)} \right)^{\abs{p \setminus I(x)}}.
\end{equation*}
Multiplying these two upper bounds gives the desired result.
\end{proof}

\subsection{Epsilon nets for regularized LCD level sets and the union bound argument}

We first recall a lemma (\cite[Lemma 11.1]{nguyentaovugaps}) that bounds the size of regularized LCD level sets.

\begin{lemma}\label{lem:rlcd_level_net}
    For \(m^{-c} \leq \alpha \leq c'/4\) and any \(D \geq 1\), the level set
    \[
    S_{D,m} = \set{ x \in \incomp_m(c_0,c_1) \colon \widehat{\LCD}_{\kappa,\gamma}(x,\alpha) \leq D}
    \]
    has a \(\beta\)-net \(\mathcal{N}\) with \(\beta = \frac{\kappa}{\sqrt{\alpha} D}\) such that
    \begin{equation}
      \abs{\mathcal{N}} \leq  \frac{(CD)^m}{\sqrt{\alpha m}^{c'm/2}} D^{2/\alpha} \label{eq:net upper bound}  
    \end{equation}
    for \(C\) a constant depending on \(c_0\) and \(c_1\). 
\end{lemma}

We shall use the above result, combined with our anti-concentration results for singular vectors to prove that with high probability there are no singular vectors in \(S_D\). However, singular vectors with smaller LCD exhibit weaker anti-concentration. Fortunately, such vectors admit a net of smaller cardinality.

\begin{definition}\label{strata}
We define the set
\[
S_{D,k,m} = \set{ x \in \incomp_m(c_0,c_1) \colon 2^{-(k+1)} D \leq \widehat{\LCD}_{\kappa,\gamma}(x,\alpha) \leq 2^{-k} D },
\]
where \( 0 \leq k \leq \log_2 D\) (since for every  \(x\in \incomp_m(c_0,c_1)\), \(\widehat{\LCD}_{\kappa, \gamma}(x) \geq 1\)).
\end{definition}

We often use $S_D$ and $S_{D,k}$ if the dimension is clear from context. 
By replacing \(D\) with \(D/2^k\) in \eqref{eq:net upper bound}, we see that \(S_{D,k,m}\) has a \(\beta_k := 2^{k} \kappa / \sqrt{\alpha}D\)-net \(\mathcal{N}_k\) of cardinality
\begin{equation}
    \abs{\mathcal{N}_k} \leq \frac{(C D)^m}{(2^k)^m (\sqrt{\alpha m})^{c'm/2}} (2^{-k} D)^{2 / \alpha}. \label{eq:SDkm net}
\end{equation}

\begin{remark}
Note that the net given by Lemma \ref{lem:rlcd_level_net} may not consist of points that are in \(S_{D,k,m}\). However, we may remedy this by covering each \(\beta_k\) ball by \(6^m\) balls of radius \(\beta_k/2\) (see, for example,  Corollary 4.2.13 of \cite{vershynin-hdp}). Then, for each of these radius \(\beta_k/2\) balls that intersect \(S_{D,k,m}\) we may choose a point in the intersection and make this the center of a ball of radius \(\beta_k\), therefore giving a \(\beta_k\) net of \(S_{D,k,m}\). We absorb this factor of \(6\) into the constant \(C\) appearing in \eqref{eq:net upper bound} and \eqref{eq:SDkm net}.
\end{remark}

Instead of attempting a \(\beta\)-net argument for all of \(S_D,m\) for \(m=n\) or \(m=p\) at once, we instead break the problem down into examining each level set \(S_{D,k,m}\) separately, before union bounding over the possible values of \(k\).

Before proving our next proposition, we provide an equivalent formulation of the singular vector equations
\begin{align}
\Sigma^{1/2} Mv &= \sigma u \label{eq: sv equation 1}, \\
M^{\mathrm{T}} \Sigma^{1/2} u &= \sigma v\label{eq: sv equation 2}, 
\end{align}
where \(u\) and \( v \) are unit vectors. As in the proof of Lemma \ref{compcomp}, let us call
\[
w = \frac{\Sigma^{1/2}u}{\norm{\Sigma^{1/2}u}}
\]
so that \(w\) is  a unit vector. Note that
\[
\norm{\Sigma^{-1/2}w} = \frac{\norm{u}}{\norm{\Sigma^{1/2}u}} = \frac{1}{\norm{\Sigma^{1/2}u}}
\]
and 
\[
u = \Sigma^{-1/2}w \norm{\Sigma^{1/2}u} = \frac{\Sigma^{-1/2}w}{\norm{\Sigma^{-1/2}w}}.
\]
Then \eqref{eq: sv equation 1} reads as
\begin{equation*}
\Sigma^{1/2}Mv = \sigma \frac{\Sigma^{-1/2}w}{\norm{\Sigma^{-1/2}w}}.
\end{equation*}
By dividing both sides of \eqref{eq: sv equation 2} by \(\norm{\Sigma^{1/2}u}\), we obtain
\begin{equation*}
M^{\mathrm{T}} w = \frac{1}{\norm{\Sigma^{1/2}u}} \sigma v = \sigma \norm{\Sigma^{-1/2}w} v.
\end{equation*}

This motivates the formulation of the following proposition, which shows that for a fixed value of \(\sigma_0\), with high probability there are no singular vector pairs \((u,v)\) with singular value \(\sigma_0\) such that \(v\) or \(w=\Sigma^{1/2}u/\norm{\Sigma^{1/2}u}\) have small regularized \(\LCD\).

\begin{proposition}\label{approx sv}
Let \( M \) and \( \Sigma \) satisfy Assumptions \ref{matrix_model} and \ref{opnorm-assumptions}. There exists a positive constant \(c\) depending on \(c_0\) and \(c_1\) chosen in Lemma \ref{compcomp}, and therefore on \(c_{\ref{matrix_model}},K,m_4\), and \(L\), such that the following holds. Let \(\kappa = n^{2c}\) and \(\gamma = 1/2\). Suppose that \( n^{-c} \leq \alpha \leq c'/4\) and \( D \leq n^{c/\alpha}\) (where \(c'\) is defined in \eqref{eq:c'}). Let \(a\) be any fixed vector in \(\RR^{n}\), \(b\) be any fixed vector in \(\RR^p\) and \(0 \leq \sigma_0 \leq K L \sqrt{n} \). Then for \(\beta = \frac{\kappa}{\sqrt{\alpha}D}\), the union of events
\begin{align*}
\begin{split}
    \mathcal{E}_v =  \{ \exists (w,v) \in \incomp_n (&c_0,c_1)   \times S_{D,p} 
    \colon \norm{\Sigma^{1/2}M v - \sigma_0 \frac{\Sigma^{-1/2}w}{\norm{\Sigma^{-1/2}w}} - a}  \leq \beta \sqrt{n}, \\ 
    &\norm{M^{\mathrm{T}} w - \left(\sigma_0 \norm{\Sigma^{-1/2} w} \right) v - b} \leq  \beta \sqrt{n}, \\
    & \widehat{\LCD}_{\kappa, \gamma}(v,\alpha) \leq \widehat{\LCD}_{\kappa, \gamma}(w, \alpha) \wedge \mathcal{E}_K \}
\end{split}
\end{align*}
and 
\begin{align*}
\begin{split}
    \mathcal{E}_w = \{ \exists (w,v) \in  S_{D,n} \times &  \incomp_p(c_0,c_1)
    \colon \norm{\Sigma^{1/2}M v - \sigma_0 \frac{\Sigma^{-1/2}w}{\norm{\Sigma^{-1/2}w}} - a}  \leq  \beta \sqrt{n}, \\ 
    &\norm{M^{\mathrm{T}} w - \left(\sigma_0 \norm{\Sigma^{-1/2} w} \right) v - b} \leq \beta \sqrt{n},\\
    &\widehat{\LCD}_{\kappa, \gamma}(v,\alpha) \geq \widehat{\LCD}_{\kappa, \gamma}(w, \alpha) \wedge \mathcal{E}_K \}
\end{split}
\end{align*}
satisfies the probability bound
\begin{equation} \label{eq:vw-smallLCD}
    \mathbb{P}(\mathcal{E}_v \vee \mathcal{E}_w) \leq 2  n^{-c' c_{\ref{matrix_model}} n/32}. 
\end{equation} 
\end{proposition}

\begin{proof}
    We first bound the probability the event \(\mathcal{E}_{v,k}\), for a fixed \(k\), which is defined to be the event that there exists \((w,v) \in \incomp_n (c_0,c_1)   \times S_{D,k,p} \) that satisfy the bounds in \(\mathcal{E}_v\).

Let \(\beta_k := \frac{2^k \kappa}{\sqrt{\alpha}D}\). By Lemma \ref{lem:rlcd_level_net}, there exists a \(\beta_k\)-net for \(S_{D,k,p}\), \(\mathcal{M}_k\) of cardinality
\begin{equation}\label{eq:strata upper bound}
\abs{\mathcal{M}_k} \leq \frac{(C D)^p}{(2^k)^p (\sqrt{\alpha p})^{c'p/2}} (2^{-k} D)^{2 / \alpha}
\end{equation}
and such that \(\mathcal{M}_k \subset S_{D,k,p}\). 
Moreover, using the trivial volumetric estimate for an \(\varepsilon\)-net of \(S^{n-1}\) (see, e.g. \cite[Lemma 5.2]{vershynin2012nonasymptotic}), the set \(\set{w \in S^{n-1} \colon \widehat{\LCD}_{\kappa, \gamma}(w) \geq \frac{D}{2^{k+1}}}\) has a \(\beta_k\) net \(\mathcal{N}_k\) of cardinality
\begin{equation}\label{eq:sphere net}
\abs{\mathcal{N}_k} \leq \left( \frac{6 \sqrt{\alpha}D}{2^{k} \kappa} \right)^n,
\end{equation}
such that all points \(w_0 \in \mathcal{N}_k\) are in the set \(\set{w \in S^{n-1} \colon \widehat{\LCD}_{\kappa, \gamma}(w) \geq \frac{D}{2^{k+1}}}\). 

Suppose that the event \(\mathcal{E}_{v,k}\) occurs and let \( (w,v) \in \incomp_n (c_0,c_1)   \times S_{D,k,p} \) that satisfy the bounds in \(\mathcal{E}_v\).  Then, there exist \(v_0 \in \mathcal{M}_k\) and \(w_0 \in \mathcal{N}_k\) such that
\begin{equation*}
\norm{v - v_0} \leq \beta_k, \qquad
\norm{w - w_0} \leq \beta_k.
\end{equation*}
It then follows that
\begin{align}
&\norm{\Sigma^{1/2}M v_0 - \sigma_0 \frac{\Sigma^{-1/2}w_0}{\norm{\Sigma^{-1/2}w_0}} -a} \nonumber\\
&\hspace{0.3\linewidth} \leq \norm{\Sigma^{1/2}M(v-v_0)} + \sigma_0 \norm{ \frac{\Sigma^{-1/2}w}{\norm{\Sigma^{-1/2}w}} - \frac{\Sigma^{-1/2}w_0}{\norm{\Sigma^{-1/2}w_0}}} \nonumber \\
&\hspace{0.33\linewidth} +\norm{\Sigma^{1/2}M v - \sigma_0 \frac{\Sigma^{-1/2}w}{\norm{\Sigma^{-1/2}w}} -a} \nonumber \\
&\hspace{0.3\linewidth} \leq \beta_k L \norm{M} + 2KL^{3} \beta_k \sqrt{n} + \beta \sqrt{n} \nonumber\\
&\hspace{0.3\linewidth} = O(\beta_k \sqrt{n}), \label{eq:LCD small ball v_0}
\end{align}
where the \(2KL^{3} \beta_{k}\sqrt{n}\) factor came from the following manipulation of the middle term 
\begin{align*}
&\sigma_0 \norm{ \frac{\Sigma^{-1/2}w}{\norm{\Sigma^{-1/2}w}} - \frac{\Sigma^{-1/2}w_0}{\norm{\Sigma^{-1/2}w_0}}} \\
& \leq \frac{\sigma_0}{\norm{\Sigma^{-1/2}w}\norm{\Sigma^{-1/2}w_0}} \norm{ \Sigma^{-1/2}w ||\Sigma^{-1/2}w_0||  - \Sigma^{-1/2}w_0 ||\Sigma^{-1/2}w|| } \\
&\leq \frac{\sigma_0}{\norm{\Sigma^{-1/2}w}} \norm{\Sigma^{-1/2}w - \Sigma^{-1/2}w_0} + \frac{\sigma_0}{\norm{\Sigma^{-1/2}w}} \abs{ ||\Sigma^{-1/2}w_0|| - ||\Sigma^{-1/2}w||}  \\
&\leq \frac{2 \sigma_0 \norm{\Sigma^{-1/2}}}{\norm{\Sigma^{-1/2}w}} \norm{w - w_0} \\
&\leq (2 KL\sqrt{n}) (L) (\beta_k) / L^{-1} \\
&= 2KL^{3} \beta_{k}\sqrt{n},
\end{align*}
Similarly, we have that
\begin{align}
    &\norm{M^{\mathrm{T}} w_0 - \sigma_0 (\| \Sigma^{-1/2}w_0 \|) v_0 -b}
    \leq 4KL^2 \beta_k \sqrt{n} = O(\beta_k \sqrt{n}). \label{eq:LCD small ball w_0}
\end{align}

\begin{details}
    Full details: 
    \begin{align*}
        &\norm{M^{\mathrm{T}} w_0 - \sigma_0 (\| \Sigma^{-1/2}w_0 \|) v_0 -b}
        \\
        &\leq \left\|M^{\mathrm{T}}(w_0 - w)\right\| - \left\| \sigma_0 \left\|\Sigma^{-1 / 2}(w_0 - w)\right\|v_0\right\| + \left\| \sigma_0 \left\|\Sigma^{-1 / 2}w\right\|(v_0 - v)\right\| \\
        &\qquad + \left\| M^{\mathrm{T}}w - \sigma_0\left\|\Sigma^{-1 / 2}w\right\|v - b\right\|\\
        &\leq K\beta_k\sqrt{n} + KL^2 \beta_k\sqrt{n} + KL^2 \beta_k\sqrt{n} + \beta\sqrt{n}\\
        &\leq 4KL^2\beta_k\sqrt{n}
        = O(\beta_k \sqrt{n}).
    \end{align*}
\end{details}

Therefore, by a simple union bound, 
\begin{align*}
\mathbb{P}(\mathcal{E}_{v,k}) &\leq \abs{\mathcal{M}_k \times \mathcal{N}_{k}}  \\
&\qquad \times \max_{(v_0, w_0) \in \mathcal{M}_k \times \mathcal{N}_{k}} \mathbb{P}\Bigg(\norm{\Sigma^{1/2}M v_0 - \sigma_0 \frac{\Sigma^{-1/2}w_0}{\norm{\Sigma^{-1/2}w_0}} -a} = O(\beta_k \sqrt{n})), \\
&\qquad \qquad \qquad \qquad \qquad\norm{M^{\mathrm{T}} w_0 - \sigma_0 (\| \Sigma^{-1/2}w_0 \|) v_0 -b} = O \left( \beta_k \sqrt{n} \right) \Bigg)
\end{align*}
Now, we observe that for \(n\) sufficiently large (as \(\kappa=n^{2c}\)), 
\begin{align*}
\beta_k 
= \frac{\kappa 2^k}{\sqrt{\alpha}D} 
\geq \frac{\sqrt{\alpha}}{c_0 \widehat{\LCD}_{\kappa, \gamma}(v_0,\alpha)}
\end{align*}
and 
\begin{align*}
    \beta_k 
= \frac{\kappa 2^k}{\sqrt{\alpha}D} 
\geq \frac{\sqrt{\alpha}}{c_0 \widehat{\LCD}_{\kappa, \gamma}(w_0,\alpha)}.
\end{align*}
Therefore, for \((v_0, w_0) \in \mathcal{M}_k \times \mathcal{N}_{k}\) we may apply Lemma \ref{anti-concentration argument} with \(x = v_0\), \(y=w_0\), \(\varepsilon, \varepsilon' \geq \beta_k\) being the constants appearing in \eqref{eq:LCD small ball v_0} and \eqref{eq:LCD small ball w_0}, and replacing \(a\) and \( b \) in said lemma with \( a + \sigma_0 \frac{\Sigma^{-1/2}w_0}{\norm{\Sigma^{-1/2}w_0}}\) and \( b + \sigma_0 (\| \Sigma^{-1/2}w_0 \|) v_0\) here. Hence, we have that \( \PP(\mathcal{E}_{v,k})\) is bounded by
\begin{equation*}
\abs{\mathcal{M}_k \times \mathcal{N}_{k}} C_{\ref{anti-concentration argument}}^{n+p} \left[\ O \left( \frac{\beta_k}{\gamma c_1 \sqrt{\alpha}} + e^{- \Omega(\kappa^2)} \right) \right]^{n - \ceil{\alpha n}}  \left[O\left( \frac{\beta_k}{\gamma c_1 \sqrt{\alpha}} + e^{- \Omega(\kappa^2)} \right) \right]^{p - \ceil{\alpha p}}.
\end{equation*}
Using \eqref{eq:strata upper bound} and \eqref{eq:sphere net}, using that \(n^{-c} \leq \alpha \leq c'/4\) and \(D \leq n^{c/\alpha}\), we see that this is bounded by
\begin{align}
\nonumber \left( \frac{C \sqrt{\alpha}D}{2^{k} \kappa} \right)^n &\frac{(C D)^p}{(2^k)^p (\sqrt{\alpha p})^{c'p/2}} (2^{-k} D)^{2 / \alpha} C_{\ref{anti-concentration argument}}^{n+p} \left[\ O \left( \frac{\beta_k}{\gamma c_1 \sqrt{\alpha}} + e^{- \Omega(\kappa^2)} \right) \right]^{n - \ceil{\alpha n}}\\
\nonumber &\qquad \times   \left[O\left( \frac{\beta_k}{\gamma c_1 \sqrt{\alpha}} + e^{- \Omega(\kappa^2)} \right) \right]^{p - \ceil{\alpha p}} \\
\label{eq:net prob bound} &\leq \frac{(\tilde{C}^{n+p}) (D^{\left\lceil \alpha n\right\rceil + \left\lceil \alpha p\right\rceil +(2/\alpha)}) (\kappa^p)}{(2^{2k/\alpha})(\alpha^{ c'p/4 +p +n/2 -\lceil \alpha n \rceil - \lceil \alpha p \rceil})(p^{c'p/4})}  \\
\nonumber &\leq \frac{(\tilde{C}^{2n}) (D^{3\alpha n+(2/\alpha)}) (\kappa^n) \left( n^{c(c'n/4+3n/2)} \right)}{(c_{\ref{matrix_model}}n)^{c' c_{\ref{matrix_model}}n/4}} \\
\nonumber &= O \left( \frac{ n^{c(3 n+(2/\alpha^2))}  n^{c(c'n/4+7n/2)} }{n^{c' c_{\ref{matrix_model}}n/8}} \right) \\
\nonumber &= O \left( \frac{ n^{c((7+c'/4)n +2n^{2c})}}{n^{c' c_{\ref{matrix_model}}n/8}} \right) \\
\nonumber &= O \left( \frac{ n^{10cn}}{n^{c' c_{\ref{matrix_model}}n/8}} \right) \\
\nonumber &= O\left( \frac{1}{n^{c' c_{\ref{matrix_model}}n/16}} \right),
\end{align}
provided that \(c \leq c' c_{\ref{matrix_model}} / 160 \), where the implied constants depend on \(c\) and \(c'\).

\begin{details}
    The upper bound on \(\alpha\) is just the largest it can be as per the definition of \(\widehat{\LCD}\).
\end{details}

We union bound over all \( \log_2D \) values for \(k\) corresponding to each event \(\mathcal{E}_{v,k}\), to get:
\begin{align*}
\PP(\mathcal{E}_v) &\leq \sum_k \PP(\mathcal{E}_{v,k}) \\
&= O\left( \frac{\log_2 D}{n^{c'c_{\ref{matrix_model}}n/16}} \right) \\
&= O\left( \frac{\log(n^{c/\alpha})}{n^{c'c_{\ref{matrix_model}}n/16}} \right) \\
&=  O\left(\frac{1}{n^{c'c_{\ref{matrix_model}}n/32}} \right),
\end{align*}
where again the implied constants depend on \(c\) and \(c'\), and we can take
\[
    c = c' c_{\ref{matrix_model}} / 160 \leq c' c_{\ref{matrix_model}} / 32.
\]

Bounding \( \mathbb{P}(\mathcal{E}_w)\) is proven by an entirely analogous argument, interchanging the roles of \(v\) and \(w\) (which only makes the probability bounds stronger, due to the fact that the roles of \(n\) and \(p\) will be interchanged in expressions such as the one on line \ref{eq:net prob bound}, leading to a larger denominator). 
By the same argument we have that \( \mathbb{P} (\mathcal{E}_{w}) = O(n^{-c'c_{\ref{matrix_model}}n/32})\), so that
\begin{equation*}
    \mathbb{P} \left( \mathcal{E}_{v} \vee \mathcal{E}_{w}\right) = O \left(\frac{1}{n^{c'c_{\ref{matrix_model}}n/32}}\right)
\end{equation*}
\end{proof} 
We use the above proposition to prove Lemma \ref{noincomp-smallLCD}, in a similar fashion as in Proposition \ref{nocompressible}. Before doing so, we introduce the following terminology in order to more easily state the following two results.
\begin{definition}\label{def:singular_vector_pair}
 We say that \((u,v) \in S^{n-1} \times S^{p-1}\) is a \emph{singular vector pair} if \(\Sigma^{1/2} Mv =\sigma_i(\Sigma^{1/2}M)u\) and \(M^{\mathrm{T}} \Sigma^{1/2} u = \sigma_i(\Sigma^{1/2}M) v\) for some \(i\). In this context, we define the vector \(w\) associated to the singular vector pair \((u,v)\) to be \(w= \Sigma^{1/2}u/\norm{\Sigma^{1/2}u}\).
\end{definition}

\begin{lemma}\label{noincomp-smallLCD}
Under the same assumptions as Proposition \ref{approx sv}, there exists a positive constant \(c_{\ref{noincomp-smallLCD}}\) depending on \(c_{\ref{matrix_model}},K,m_4\) and \(L\) such that
\begin{align*}
\PP &\Big( \left\{ \exists \text{ singular vector pair } (u,v) \colon (w,v) \in  \incomp_n(c_0,c_1) \times S_{D,p} \right.\\
    &\hspace{0.25\linewidth}\left. \text{ or } (w,v) \in  S_{D,n} \times \incomp_p(c_0,c_1)\right\} \wedge \mathcal{E}_K \Big)
\leq   2 e^{-c_{\ref{noincomp-smallLCD}} n}.
\end{align*}
\end{lemma}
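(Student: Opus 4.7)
The plan is to mimic the strategy of Proposition \ref{nocompressible}: use the fact that, on \(\mathcal{E}_K\), every singular value of \(\Sigma^{1/2}M\) lies in \([0,KL\sqrt n]\), and replace it by a nearby value on a deterministic grid. We then apply Proposition \ref{approx sv} at each grid point and union bound.

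First, fix \(\beta = \kappa/(\sqrt{\alpha}D)\) as in Proposition \ref{approx sv} and let \(\eta := \beta\sqrt{n}/L\). Consider the finite grid
\[
\mathcal{G} := \set{k\eta \colon k \in \Z, \ 0 \leq k\eta \leq KL\sqrt n}, \qquad |\mathcal{G}| = O\!\left(\tfrac{KL^{2}}{\beta}\right).
\]
Now, suppose \((u,v)\) is a singular vector pair of \(\Sigma^{1/2}M\) with singular value \(\sigma\), and let \(w = \Sigma^{1/2}v/\norm{\Sigma^{1/2}v}\) as in Definition \ref{def:singular_vector_pair}. By the manipulations preceding Proposition \ref{approx sv}, the singular vector equations become
\[
\Sigma^{1/2}Mu = \sigma\,\frac{\Sigma^{-1/2}w}{\norm{\Sigma^{-1/2}w}}, \qquad M^{T}w = \sigma\,\norm{\Sigma^{-1/2}w}\,u.
\]
On \(\mathcal{E}_K\), pick \(\sigma_0\in\mathcal{G}\) with \(|\sigma-\sigma_0|\leq\eta\). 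Then a direct calculation, together with the bound \(\norm{\Sigma^{-1/2}w}\leq L\), yields
\begin{align*}
\left\|\Sigma^{1/2}Mu - \sigma_0\frac{\Sigma^{-1/2}w}{\norm{\Sigma^{-1/2}w}}\right\|
 &= |\sigma-\sigma_0| \leq \eta \leq \beta\sqrt n,\\
\left\|M^{T}w - \sigma_0\norm{\Sigma^{-1/2}w}\,u\right\|
 &= |\sigma-\sigma_0|\cdot\norm{\Sigma^{-1/2}w} \leq L\eta = \beta\sqrt n,
\end{align*}
so \((u,w)\) witnesses the event \(\mathcal{E}_u\vee\mathcal{E}_w\) of Proposition \ref{approx sv} for this \(\sigma_0\), with \(a=0\) and \(b=0\).

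Hence, on \(\mathcal{E}_K\), the event in Lemma \ref{noincomp-smallLCD} is contained in the union, over \(\sigma_0\in\mathcal{G}\), of the events to which Proposition \ref{approx sv} applies. Applying Proposition \ref{approx sv} for each \(\sigma_0\) and the trivial union bound gives
\[
\mathbb{P}(\text{event}\ \wedge\ \mathcal{E}_K)
\;\leq\; |\mathcal{G}|\cdot C_{\ref{approx sv}} n^{-c' c_{\ref{matrix_model}} n/32}
\;\leq\; O\!\left(\frac{1}{\beta}\right)\cdot C_{\ref{approx sv}} n^{-c' c_{\ref{matrix_model}} n/32}.
\]
It remains only to check that this is exponentially small in \(n\). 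Since \(1/\beta = \sqrt{\alpha}D/\kappa \leq D \leq n^{c/\alpha}\) and \(\alpha\geq n^{-c}\), we have \(1/\beta\leq n^{cn^{c}}=\exp(cn^{c}\ln n)\), while \(n^{-c' c_{\ref{matrix_model}} n/32}=\exp(-c' c_{\ref{matrix_model}} n\ln n/32)\). For \(c\) chosen as in Proposition \ref{approx sv} (in particular \(c<1\)), we have \(cn^{c}\ll c' c_{\ref{matrix_model}} n/32\) for all sufficiently large \(n\), so the product is at most \(\exp(-c' c_{\ref{matrix_model}} n\ln n/64)\leq e^{-\alpha_{\ref{noincomp-smallLCD}} n}\) for a suitable constant \(\alpha_{\ref{noincomp-smallLCD}}>0\).

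The main (minor) obstacle is just the bookkeeping in the second step: one must carry the factor \(\norm{\Sigma^{-1/2}w}\leq L\) correctly so that the two approximate singular vector inequalities are satisfied with the \emph{same} tolerance \(\beta\sqrt n\) appearing in Proposition \ref{approx sv}. Once the grid spacing \(\eta = \beta\sqrt n/L\) is chosen with this in mind, the rest is a union bound.
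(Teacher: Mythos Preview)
Your proposal is correct and follows essentially the same approach as the paper: approximate the singular value \(\sigma\) by a point \(\sigma_0\) on a grid of spacing \(O(\beta\sqrt n)\), observe that the exact singular vector equations then give the approximate ones in Proposition~\ref{approx sv} with \(a=b=0\), apply that proposition at each grid point, and union bound over the grid (absorbing the factor \(1/\beta \le n^{cn^c}\) into the super-exponential decay). Your version is in fact slightly more careful than the paper's in tracking the factor \(\norm{\Sigma^{-1/2}w}\le L\) when passing from the \(v\)-equation to the \(w\)-equation, which is why you take grid spacing \(\beta\sqrt n/L\) rather than \(\beta\sqrt n\); the paper's proof leaves this constant implicit. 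One tiny point you might add for completeness: to see that \((u,w)\) witnesses \(\mathcal{E}_u\vee\mathcal{E}_w\), note that if \(u\in S_{D,\R^p}\), \(w\in\incomp_n\) but \(\widehat{\LCD}(w)<\widehat{\LCD}(u)\le D\), then \(w\in S_{D,\R^n}\) and \(u\in\incomp_p\), so the pair lands in \(\mathcal{E}_w\) instead (and symmetrically for the other case).
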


\begin{proof}
Assume there is such a singular vector pair \((u,v)\). As we are intersecting with the event \(\mathcal{E}_K\), we may assume that \(\sigma_i(\Sigma^{1/2} M) \in [0, K L \sqrt{n}]\). We again use a net argument to reduce the problem to fixed $\sigma$.  We can choose a $\beta \sqrt{n}/L$ net of \([0,KL \sqrt{n}]\) by considering non-zero integer multiples of \(  \beta \sqrt{n}/L\) (Giving a net of size \( \lfloor L^2 K/\beta \rfloor\)).  For any \(\sigma_i (\Sigma^{1/2} M)\), there exists a \(\sigma_0\) in the net such that \(\abs{\sigma_i - \sigma_0} \leq  \beta\sqrt{n}/L\). Then
\begin{align*}
\norm{\Sigma^{1/2} Mv - \sigma_0 u} &\leq  \abs{\sigma_i - \sigma_0} \norm{u}  \leq \frac{\beta \sqrt{n}}{L},  \\
\norm{M^{\mathrm{T}} \Sigma^{1/2} u - \sigma_0 v} &\leq \abs{\sigma_i - \sigma_0} \norm{v} \leq \frac{\beta \sqrt{n}}{L}.
\end{align*}
This in turn implies that 
\begin{align*}
\norm{\Sigma^{1/2}Mv - \sigma_0 \frac{\Sigma^{-1/2}w}{\norm{\Sigma^{-1/2}w}}} &= \norm{\Sigma^{1/2} Mv - \sigma_0 u} \leq \frac{\beta \sqrt{n}}{L} \leq \beta \sqrt{n} \\
\norm{M^{\mathrm{T}} w - \left(\sigma_0 \norm{\Sigma^{-1/2} w} \right) v} &= \frac{1}{\norm{\Sigma^{1/2}u}} \norm{M^{\mathrm{T}} \Sigma^{1/2} u - \sigma_0 v} \leq L \frac{\beta \sqrt{n}}{L} = \beta \sqrt{n},
\end{align*}
where we used the relations between \(u\) and \(w\) and their norms derived immediately after equations \eqref{eq: sv equation 1} and \eqref{eq: sv equation 2}. By Proposition \ref{approx sv} we have that for fixed \(\sigma_0\) this intersection of events has probability \(2 n^{-c' c_{\ref{matrix_model}} n/8} \leq  2 e^{-\hat{c} n}\) for some constant \(\hat{c}\). So by union bounding over all \( K L^2/ \beta\) choices of \(\sigma_0\), we get that
\begin{align*}
\PP & \left( \set{ \exists \text{ a singular vector pair } (u,v) \colon (w,v) \in \incomp_n(c_0,c_1) \times S_{D,p}} \wedge \mathcal{E}_K \right) \\
&\leq 2\exp(-\hat{c} n) K L^2 \beta^{-1} \\
&=   2\exp(-\hat{c} n) K L^2 D \sqrt{\alpha} / n^{2c} \\
&= O( n^{c n^c} \exp(-\hat{c} n)) \\
&= O(\exp(-\hat{c} n/2)).
\end{align*}
We then note that the event where \(v \in \incomp_p(c_0,c_1), \ w \in S_{D,n}\) satisfies the same probability bound by \eqref{eq:vw-smallLCD},
\begin{align*}
\PP & \left( \set{ \exists \text{ a singular vector pair } (u,v) \colon (w,v) \in   S_{D,n} \times  \incomp_p(c_0,c_1)} \wedge \mathcal{E}_K \right) \\
&= O(\exp (-\hat{c} n / 2)).
\end{align*}
\end{proof}

Combining Lemma \ref{noincomp-smallLCD} with Proposition \ref{nocompressible}, we obtain the following, which states that when we are working on the event \(\mathcal{E}_K\) we may assume, up to an exponentially small probability, that all singular vectors are incompressible with large LCD.

\begin{theorem}\label{bothlargeLCD}
Let \( M \) and \( \Sigma \) be as in Assumptions \ref{matrix_model} and \ref{opnorm-assumptions} and assume \(K \geq 1\). Then there exists \(c_{\ref{bothlargeLCD}},c>0\) depending on \(\kappa,\gamma,c_{\ref{matrix_model}},K,m_4\) and \(L\) such that:
\begin{align*}
\PP &\big( \{ \text{There exists a singular vector pair \((u,v)\): one of \(w\footnotemark\) or \(v\) is either compressible} \\ &\text{or is incompressible with}
\text{  \(\widehat{\LCD}_{\kappa,\gamma}(v,\alpha)\) or \(\widehat{\LCD}_{\kappa,\gamma}(w,\alpha) \leq n^{c/\alpha} \)} \} \wedge \mathcal{E}_K \big) \leq  2 e^{-c_{\ref{bothlargeLCD}} n} 
\end{align*} 
\footnotetext{Recall Definition \ref{def:singular_vector_pair}.}
for \(c\) as in Proposition \ref{approx sv} and \(\alpha\) in the range \(n^{-c} \leq \alpha \leq c'/4 = c_0 c_1^2/16\) (recall that \(c_0\) and \(c_1\) were fixed in Lemma \ref{nocompressible}).
\end{theorem}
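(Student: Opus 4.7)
The plan is to obtain Theorem \ref{bothlargeLCD} as a direct union bound of Proposition \ref{nocompressible} and Lemma \ref{noincomp-smallLCD} with the appropriate choice of the level set parameter $D$. The good event in the theorem is the intersection of two conditions on every singular vector pair $(u,v)$: both $u$ and the associated vector $w=\Sigma^{1/2}v/\|\Sigma^{1/2}v\|$ are incompressible, and both have regularized LCD at least $n^{c/\alpha}$. Its complement decomposes naturally according to which condition fails first.

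The first step is to set $D := n^{c/\alpha}$, where $c$ is the constant supplied by Proposition \ref{approx sv} (and hence inherited by Lemma \ref{noincomp-smallLCD}); this choice satisfies the hypothesis $D \leq n^{c/\alpha}$ of those results, and yields the LCD threshold asserted in the theorem. Next, define the two ``bad'' events
\begin{align*}
\mathcal{A} := \bigl\{\,&\exists\ \text{singular vector pair } (u,v) : u \in \comp_p(c_0,c_1) \text{ or } w \in \comp_n(c_0,c_1)\bigr\},\\
\mathcal{B} := \bigl\{\,&\exists\ \text{singular vector pair } (u,v) :\ (u,w) \in S_{D,\R^p} \times \incomp_n(c_0,c_1) \\
 &\text{or } (u,w) \in \incomp_p(c_0,c_1) \times S_{D,\R^n}\bigr\}.
\end{align*}
Because $S_{D,\R^m} \subset \incomp_m(c_0,c_1)$, the complement $(\mathcal{A} \cup \mathcal{B})^c$ is exactly the event that for every singular vector pair $(u,v)$, both $u$ and $w$ are incompressible and both satisfy $\widehat{\LCD}_{\kappa,\gamma}(\,\cdot\,,\alpha) > D = n^{c/\alpha}$, which is (up to adjusting the constant $c$ by an arbitrarily small amount) the good event in the theorem.

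It then remains to bound $\PP(\mathcal{A} \wedge \mathcal{E}_K)$ and $\PP(\mathcal{B} \wedge \mathcal{E}_K)$. Proposition \ref{nocompressible} gives $\PP(\mathcal{A} \wedge \mathcal{E}_K) \leq C_{\ref{nocompressible}} e^{-c_{\ref{nocompressible}} n}$, while Lemma \ref{noincomp-smallLCD} applied with our chosen $D$ gives $\PP(\mathcal{B} \wedge \mathcal{E}_K) \leq C_{\ref{noincomp-smallLCD}} e^{-\alpha_{\ref{noincomp-smallLCD}} n}$. A union bound and the identity $\PP(\mathcal{E}_K) - \PP((\mathcal{A}\cup\mathcal{B})\wedge \mathcal{E}_K) \leq \PP((\mathcal{A}\cup\mathcal{B})^c \wedge \mathcal{E}_K)$ furnish constants $C_{\ref{bothlargeLCD}}, \alpha_{\ref{bothlargeLCD}} > 0$ depending on $c_{\ref{matrix_model}},K,L,m_4$ such that the desired lower bound $1 - C_{\ref{bothlargeLCD}} e^{-\alpha_{\ref{bothlargeLCD}} n}$ holds. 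There is no real obstacle here since the technical work has been done in the two earlier results; the only point to be careful about is that the parameters $\kappa = n^{2c}$, $\gamma = 1/2$, and the admissible range $n^{-c} \leq \alpha \leq c'/4$ are consistent between Proposition \ref{nocompressible} (which is independent of $\alpha$, $\kappa$, $\gamma$) and Lemma \ref{noincomp-smallLCD}, so that both probability bounds hold simultaneously on the same event.
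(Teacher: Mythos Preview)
Your proposal is correct and matches the paper's approach exactly: the paper simply states the theorem as the combination of Proposition \ref{nocompressible} and Lemma \ref{noincomp-smallLCD} (with $D=n^{c/\alpha}$), which is precisely the union bound you carry out. The only quibble is the final inequality: what your argument cleanly yields is $\PP\big((\mathcal{A}\cup\mathcal{B})\wedge\mathcal{E}_K\big)\le C_{\ref{bothlargeLCD}}e^{-\alpha_{\ref{bothlargeLCD}}n}$ (the commented-out form of the statement), and passing to the displayed lower bound on $\PP(\text{good}\wedge\mathcal{E}_K)$ tacitly uses $\PP(\mathcal{E}_K^c)$ being exponentially small---but this is a wrinkle in the paper's phrasing, not in your reasoning, and the complement form is what is actually used downstream.
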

\section{Proofs of Main Results}\label{mainproofs}
We use the facts we have established regarding the incompressibility and regularized LCD of unit singular vectors to prove our first main result. 

\begin{theorem}\label{main1}
    Let \(M\) and \(\Sigma\) be as in Assumptions \ref{matrix_model} and \ref{opnorm-assumptions}. Then there exist positive constants \(c\) and  \(C_{\ref{main1}}\) depending on \(c_{\ref{matrix_model}},K,m_4\) and \(m_4\) such that for \(n^{-c} \leq \alpha \leq c\) and \(\delta \geq n^{-c/\alpha}\),
    \begin{equation}\label{eq:main1.1}
\sup_{1 \leq i \leq p-1} \PP \left( \sigma_{i}^2(\Sigma^{1 / 2}M) - \sigma_{i}^2(\Sigma^{1 / 2}M') \leq  \sigma_{i}(\Sigma^{1 / 2}M')\, \delta n^{-1/2} \wedge \mathcal{E}_{K} \right) \leq C_{\ref{main1}} \frac{\delta}{\sqrt{\alpha}} ,
\end{equation}
Where \(M'\) is one of the \(n \times (p-1)\) minors of \(M\). In particular, by Cauchy's interlacing theorem (see \cite{MR1477662} Corollary III.1.5, recalling that the singular values are the eigenvalues of \(\sqrt{M^T \Sigma M}\)), we have that
\begin{equation}\label{eq:main1.2}
\sup_{1 \leq i \leq p-1} \PP \left( \sigma_{i}^2(\Sigma^{1 / 2}M) - \sigma_{i+1}^2(\Sigma^{1 / 2}M) \leq  \sigma_{i+1}(\Sigma^{1 / 2}M)\, \delta n^{-1/2}  \wedge \mathcal{E}_{K} \right) \leq C_{\ref{main1}} \frac{\delta}{\sqrt{\alpha}} .
\end{equation}
\end{theorem}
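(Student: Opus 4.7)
The plan is to reduce the gap event in \eqref{eq:main1.1} to a small-ball probability for an inner product $\langle \hat{w}', X\rangle$, where $X$ is the last column of $M$ and $\hat{w}'$ is a unit vector built from the minor $M'$, and then control this small-ball probability via the regularized LCD machinery of Theorem \ref{bothlargeLCD}.

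First, I would decompose $M = \begin{pmatrix} M' \mid X \end{pmatrix}$ and let $u = (u',b)^{T}$ be a right singular vector of $\Sigma^{1/2}M$ corresponding to $\sigma_i(\Sigma^{1/2}M)$, with $u' \in \RR^{p-1}$ and $b = u_p \in \RR$. Reading off the first $p-1$ coordinates of $M^{T}\Sigma M u = \sigma_i^2(\Sigma^{1/2}M)\,u$ and pairing with a right singular vector $\hat{u}'$ of $\Sigma^{1/2}M'$ associated to $\sigma_i(\Sigma^{1/2}M')$, then using $\Sigma M'\hat{u}' = \sigma_i(\Sigma^{1/2}M')\,\Sigma^{1/2}\hat{v}'$ (where $\hat{v}'$ is the matching left singular vector), yields the key identity
\[
(\sigma_i^2(\Sigma^{1/2}M) - \sigma_i^2(\Sigma^{1/2}M'))\,\langle \hat{u}', u'\rangle = b\,\sigma_i(\Sigma^{1/2}M')\,\|\Sigma^{1/2}\hat{v}'\|\,\langle \hat{w}', X\rangle,
\]
with $\hat{w}' := \Sigma^{1/2}\hat{v}'/\|\Sigma^{1/2}\hat{v}'\|$. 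Since $|\langle \hat{u}', u'\rangle|\leq 1$ and $\|\Sigma^{1/2}\hat{v}'\|\geq L^{-1}$, on the event in \eqref{eq:main1.1} this gives the crucial bound $|b|\,|\langle \hat{w}', X\rangle| \leq L\delta/\sqrt{n}$. The key structural feature is that $\hat{w}'$ depends only on $(M',\Sigma)$, hence is independent of $X$.

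Second, I would apply Theorem \ref{bothlargeLCD} to $\Sigma^{1/2}M'$ (which is $n\times(p-1)$ and still satisfies the assumptions with essentially the same constants, since its aspect ratio differs from $\lambda$ by $O(1/n)$) together with Proposition \ref{nocompressible} applied to $\Sigma^{1/2}M$. This restricts us, up to an event of probability at most $Ce^{-cn}$ (absorbed into the final bound using $\delta \geq n^{-c/\alpha}$), to the event on which $\hat{w}'$ is incompressible with $\widehat{\LCD}_{\kappa,1/2}(\hat{w}',\alpha) \geq n^{c/\alpha}$, and $u$ is incompressible so that at least $c'p$ of its coordinates satisfy $|u_j| \geq c_1/\sqrt{2p}$. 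Splitting the event by the size of $|b|=|u_p|$: on $\{|b|\geq c_1/\sqrt{2p}\}$, the inequality above yields $|\langle \hat{w}', X\rangle| \leq L\sqrt{2\lambda}\,\delta/c_1 = O(\delta)$, and conditioning on $M'$ so that $X$ consists of i.i.d.\ entries independent of $\hat{w}'$, Lemma \ref{incompanticonc} supplies $\PP(|\langle \hat{w}', X\rangle| \leq O(\delta) \mid M') \leq C\delta/\sqrt{\alpha}$, which integrates to handle this ``generic'' case.

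Finally, the complementary case $\{|u_p| < c_1/\sqrt{2p}\}$ is handled by column exchangeability. Since the columns of $M$ are i.i.d., for each $j \in [p]$ the event $\mathcal{G}_i^{(j)}$ obtained by removing column $j$ instead of column $p$ has the same probability as the event in \eqref{eq:main1.1}, and the analogous derivation produces $|u_j|\,|\langle \hat{w}^{(j)}, X_j\rangle| \leq L\delta/\sqrt{n}$ with $\hat{w}^{(j)}$ independent of $X_j$. Writing
\[
\PP\Bigl(\mathcal{G}_i^{(p)} \wedge \{|u_p| < c_1/\sqrt{2p}\}\Bigr) = \frac{1}{p}\sum_{j=1}^p \PP\Bigl(\mathcal{G}_i^{(j)} \wedge \{|u_j| < c_1/\sqrt{2p}\}\Bigr),
\]
decomposing the right-hand side dyadically in the range of $|u_j|$, and reapplying Lemma \ref{incompanticonc} to $\langle \hat{w}^{(j)}, X_j\rangle$ at the correspondingly rescaled threshold in each bucket, should close the bound. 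The main obstacle will be precisely this last step: the singular vector equation correlates a small gap with a small $|b|$ (so that the ``bad'' case $\{|b|<c_1/\sqrt{2p}\}$ is not automatically rare), so balancing the dyadic contributions against the loss from the exchangeability and the incompressibility-controlled count $\#\{j:|u_j|<c_1/\sqrt{2p}\}\leq (1-c')p$ while keeping the final bound linear in $\delta$ (rather than $\sqrt{\delta}$) is the technical heart of the argument.
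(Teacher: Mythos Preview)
Your first two steps---the algebraic reduction yielding $|b|\,|\langle \hat w',X\rangle|\le L\delta/\sqrt n$, and the appeal to Theorem~\ref{bothlargeLCD} (for the minor) together with Proposition~\ref{nocompressible} (for $u$)---match the paper exactly. The gap is entirely in your third step. The exchangeability identity you write,
\[
\PP\bigl(\mathcal G_i^{(p)}\wedge\{|u_p|<B\}\bigr)=\frac1p\sum_{j=1}^p\PP\bigl(\mathcal G_i^{(j)}\wedge\{|u_j|<B\}\bigr)\qquad(B=c_1/\sqrt{2p}),
\]
is a tautology: every summand on the right equals the left side, so the equation carries no information. A dyadic split in $|u_j|$ then cannot recover a bound linear in $\delta$: in the bucket $|u_j|\in[2^{-k-1}B,2^{-k}B]$ the small-ball threshold for $\langle\hat w^{(j)},X_j\rangle$ becomes $\Theta(2^k\delta)$, so Lemma~\ref{incompanticonc} only yields $O(2^k\delta/\sqrt\alpha)$, and you have nothing to offset the geometric growth in $k$. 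Your closing sentence concedes this, and indeed there is no way through along this route.

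The paper avoids the case split altogether by using incompressibility of $u$ in the \emph{opposite} direction. Setting $p_B:=\#\{j:|u_j|\ge B\}$ and $N:=c_0c_1^2c_{\ref{matrix_model}}n/2$, Proposition~\ref{nocompressible} gives $\PP(p_B<N)\le C_{\ref{nocompressible}}e^{-\alpha_{\ref{nocompressible}}n}$, while Markov's inequality applied to the nonnegative random variable $\mathds 1_{\mathcal E_i}\,p_B$ gives
\[
\PP(\mathcal E_i)\ \le\ \frac1N\sum_{j=1}^p\PP\bigl(\mathcal E_i\wedge\{|u_j|\ge B\}\bigr)+\PP(p_B<N)\ \le\ \frac pN\,\max_{j}\PP\bigl(\mathcal E_i\wedge\{|u_j|\ge B\}\bigr)+o\!\left(\tfrac{\delta}{\sqrt\alpha}\right).
\]
Since $p/N=O(1)$ and column exchangeability reduces the maximum to the $j=p$ term, everything collapses to the single favourable event $\{|u_p|\ge B\}$, on which $|\langle\hat w',X\rangle|\le O(\delta)$ and Lemma~\ref{incompanticonc} applies directly with $X$ independent of $\hat w'$. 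The ``bad'' regime $\{|b|<B\}$ is never estimated on its own; it is absorbed by the Markov/double-counting step.
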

\begin{proof}
Fix \(1 \leq i \leq p-1\), and let
\[
   \mathcal{E}_i=\set{\sigma_{i}^2(\Sigma^{1/2} M) - \sigma^2_{i+1}(\Sigma^{1/2}M') \leq  \sigma_{i}(\Sigma^{1/2} M')\, \delta n^{-1/2}} \wedge \mathcal{E}_K.
\]
Consider a singular vector pair  \((u, v)\) of \(\Sigma^{1/2}M\) corresponding to the singular value \(\sigma_i(\Sigma^{1/2}M)\). As in previous sections, we let \(w = \Sigma^{1 / 2}u / \left\|\Sigma^{1 / 2}u\right\|\).
As we intersected with \(\mathcal{E}_K\) in our definition of \(\mathcal{E}_i\), all events in our proof can be assumed to be intersected with \(\mathcal{E}_K\) as needed (which we often omit to improve readability). If \(\mathcal{E}_i\) occurs, we have by Theorem \ref{bothlargeLCD} that with probability at least \(1 - 2e^{-c_{\ref{bothlargeLCD}} n}\), both \(v\) and \(w\) are incompressible with regularized LCD greater than or equal to \(n^{c/\alpha}\). Moreover, a \(n \times (p-1)\) minor \(M'\) of \(M\) will have a singular vector pair \((\hat{u}, \hat{v})\) (and \(\hat{w}\)) associated with \(\sigma_{i}(\Sigma^{1 / 2}M')\) for which the same holds. 

We decompose \(M\) as
\begin{equation}\label{eq:M'Xdecomp}
M = 
\begin{pmatrix}
M' \mid  X
\end{pmatrix}
\end{equation}
where $X$ is the final column on \(M\).
Consider the equation \(M^{\mathrm{T}} \Sigma M v = \sigma_i^2(\Sigma^{1 / 2}M) v\). We decompose \(v\) as 
\[
v = \begin{pmatrix}
v' \\ b
\end{pmatrix},
\]
where \(v' \in \RR^{p-1}\) and \(b\) is a scalar. It follows that
\begin{equation*}
M^{\mathrm{T}} \Sigma M v = \begin{pmatrix}
M'^{\mathrm{T}} \Sigma M' & M'^{\mathrm{T}} \Sigma X \\
X^{\mathrm{T}} \Sigma M' & X^{\mathrm{T}} \Sigma X
\end{pmatrix} 
\begin{pmatrix}
v ' \\
b
\end{pmatrix}
=
\sigma_i^2(\Sigma^{1 / 2}M) 
\begin{pmatrix}
v ' \\
b
\end{pmatrix}.
\end{equation*}
The first \((n-1)\) rows of this equation then reads:
\begin{equation*}
M'^{\mathrm{T}} \Sigma M' v' + M'^{\mathrm{T}} \Sigma X b = \sigma_i^2(\Sigma^{1 / 2}M) v'.
\end{equation*}
Rearranging the above gives
\begin{equation}
\label{eq:ev eqn} M'^{\mathrm{T}} \Sigma M' v' - \sigma_i^2(\Sigma^{1 / 2}M) v'  =  - M'^{\mathrm{T}} \Sigma X b. 
\end{equation}
Now let us multiply both sides of \eqref{eq:ev eqn} on the left by \(\hat{v}^{\mathrm{T}}\), the transpose of the right singular vector of \(\Sigma^{1 / 2}M'\) with singular value \(\sigma_i(\Sigma^{1 / 2}M')\). From this, we obtain
\begin{equation*}
(\sigma_i^2(\Sigma^{1 / 2}M') - \sigma_i^2(\Sigma^{1 / 2}M)) \hat{v}^{\mathrm{T}} v' = - b \sigma_i(\Sigma^{1 / 2}M') \norm{\Sigma^{1/2}\hat{u}} \hat{w}^{\mathrm{T}} X 
\end{equation*}
(recalling that \(\Sigma M' \hat{v} = \Sigma^{1/2} \sigma_i(\Sigma^{1/2} M') \hat{u} = \sigma_i(\Sigma^{1/2} M') \norm{\Sigma^{1/2}\hat{u}} \hat{w}\), where \(\hat{w} = \Sigma^{1/2}\hat{u}/\norm{\Sigma^{1/2}\hat{u}}\)).
Since \(\hat{v}\) and \(v'\) both have norm at most \(1\), applying the Cauchy-Schwarz inequality and the fact that \(\norm{\Sigma^{-1/2}} \leq L\) yields
\begin{equation*}
\abs{b \hat{w}^{\mathrm{T}} X} \sigma_i(\Sigma^{1 / 2}M') \leq L \abs{\sigma_i^2(\Sigma^{1 / 2}M) - \sigma_{i}^2(\Sigma^{1 / 2}M')}.
\end{equation*}
Therefore, if \(\mathcal{E}_i\), holds then
\begin{equation*}
\abs{b \hat{w}^{\mathrm{T}} X} \sigma_i(\Sigma^{1 / 2}M')  \leq L \sigma_{i}(\Sigma^{1 / 2}M') \abs{ \delta n^{-1/2}}.
\end{equation*}
The fact that \(\mathcal{E}_i\) implies \(\mathcal{E}_K\) mean that \(\Sigma^{1/2}M\), and therefore \(\Sigma^{1/2}M'\), have full rank, allowing us to divide by \(\sigma_i(\Sigma^{1/2}M')\). This yields
\begin{equation}
\label{eq:final coordinate} \abs{b \hat{w}^{\mathrm{T}} X} \leq L \abs{ \delta n^{-1/2}}.   
\end{equation}
Note that we wish to divide by \(b\), a coordinate of \(v\), a right singular vector for \(\Sigma^{1 / 2}M\). We wish to show that with high probability \(b\) is not too small, so that \ref{eq:final coordinate} provides a useful bound on the quantity \(\abs{\hat{w}^{\mathrm{T}} X}\). With this, we can apply the structure results proven in Section \ref{incompressible section} to \(\hat{w}\) and consequently use the small ball probability to bound \(\mathbb{P}(\mathcal{E}_i)\). 

By Proposition \ref{nocompressible}, \(v\) is incompressible with probability at least \(1 - 2 e^{-c_{\ref{nocompressible}} n}\). If \(v\) is incompressible then at least \(c_0 c_1^2 p /2 \geq c_0 c_1 c_{\ref{matrix_model}} n /2 \) coordinates of \(v\) have magnitude at least \(\frac{c_1}{\sqrt{2}} p^{-1/2} \geq \frac{c_1}{\sqrt{2}} n^{-1/2}\) (see \cite{rudelson2008littlewoodoffordprobleminvertibilityrandom}, Lemma 3.4). Let us call this lower bound \(B = \frac{c_1}{\sqrt{2}} n^{-1/2}\).

Let \(\mathcal{G}_{ij}\) be defined as
\[
\mathcal{G}_{ij} = \mathcal{E}_i \wedge \set{\abs{v^j} \geq B},
\]
where \(v^j\) is the \(j^{\text{th}}\) coordinate of \(v\).

Define the random variable
\[
p_B := \abs{ \set{ j \colon \abs{v^j} \geq B }}.
\]
Then \( p_B = l_1 + \dots + l_p \), where:
\[
l_j = \begin{cases}
1 \text{ if } \abs{v^j} \geq B \\
0 \text{ if } \abs{v^j} < B 
\end{cases}.
\]
It follows that for any \(N \geq 1\)
\begin{align*}
\PP(\mathcal{E}_i) &= \PP(\mathcal{E}_i \wedge \set{p_B \geq N}) + \PP(\mathcal{E}_i\, \wedge \set{p_B < N})   \\
&\leq \PP( \mathds{1}_{\mathcal{E}_i}\, p_B \geq N)  + \PP(p_B < N).
\end{align*}
Let us examine \(\PP( \mathds{1}_{\mathcal{E}_i}\, p_B \geq N)\). By Markov's inequality:
\begin{align*}
\PP( \mathds{1}_{\mathcal{E}_i}\, p_B \geq N) &\leq \frac{1}{N} \EE \left[ \mathds{1}_{\mathcal{E}_i}\, p_B \right] \\
&= \frac{1}{N} \sum_{j=1}^p \EE(l_j \, \mathds{1}_{\mathcal{E}_i}) \\
&\leq \frac{p}{N} \max_{1 \leq j \leq p} \EE(l_j \, \mathds{1}_{\mathcal{E}_i})\\
&= \frac{p}{N} \max_{1 \leq j \leq p} \PP(\mathcal{G}_{ij}).
\end{align*}
The \(\mathcal{G}_{ij}\) which has the largest probability will depend on the covariance matrix \(\Sigma\). However, as \(\Sigma\) is deterministic, we assume henceforth that \(\mathbb{P}(\mathcal{G}_{ip})\) is the greatest, in order to correspond to our decomposition \eqref{eq:M'Xdecomp}. For if this is not the case and \(\mathcal{G}_{ik}\) (\(k\neq p\)) achieves the maximum for the given \(\Sigma\), we would let \(X\) equal the \(k^{\text{th}}\) column of \(M\) and the rest of the argument would proceed identically. 

It therefore follows that
\begin{align}
\nonumber \PP(\mathcal{E}_i) &\leq \PP( \mathds{1}_{\mathcal{E}_i}\, p_B \geq N)  + \PP(p_B < N) \\
\nonumber &\leq \frac{p}{N} \max_{1 \leq j \leq p} \PP(\mathcal{G}_{ij}) + \PP(p_B < N) \\
\nonumber &= \frac{p}{N} \PP(\mathcal{G}_{ip}) + \PP(p_B < N) \\
&\leq \frac{p}{N} \PP \left(\abs{\hat{w}^{\mathrm{T}} X}  \leq L  \frac{ \delta n^{-1/2}}{B} \right) + \PP(p_B < N), \label{eq:Double counting}
\end{align}
where the final line uses the fact that
\(\mathcal{G}_{ip}\) implies
\[\abs{\hat{w}^{\mathrm{T}} X} \leq L \frac{ \delta n^{-1/2}}{B}.\] 
(This follows from \eqref{eq:final coordinate} and the definition of \(\mathcal{G}_{ij}\)).
Now choose \(N = c_0 c_1^2 c_{\ref{matrix_model}} n/4\) so that \(p/N \leq 4/ c_0 c_1^2 c_{\ref{matrix_model}}\), which is of constant order. Then, by this choice of \(N\) and the fact that \(B = \frac{c_1}{\sqrt{2}} n^{-1/2}\), we see that
\begin{equation*}
    \PP(p_B < N) \leq \PP(v \text{ is compressible}) \leq 2 e^{-c_{\ref{nocompressible}} n},
\end{equation*}
where the first inequality comes from the aforementioned fact that if \(v\) is incompressible , then \(p_B \geq N\) for our choice of \(B\) and \(N\) (see \eqref{eq:well spread} and \eqref{eq:c'}), and the second comes from Proposition \ref{nocompressible} (recalling that we are intersecting with \(\mathcal{E}_K\)). 
Using the above bound and plugging the values of \(N\) and \(B\) into \eqref{eq:Double counting} yields
\begin{align}\label{eq:smallballbound}
\PP(\mathcal{E}_i) &\leq \frac{2}{c_0 c_1^2 c_{\ref{matrix_model}}} \PP \left(\abs{\hat{w}^{\mathrm{T}} X} \leq L\frac{\sqrt{2} \delta}{c_1} \right) + 2 e^{-c_{\ref{nocompressible}} n}.
\end{align}
Let us examine the event \(\PP \left(\abs{\hat{w}^{\mathrm{T}} X} \leq L\sqrt{2} \delta / c_1 \right)\), which is bounded by
\[
   \mathcal{L}(\hat{w}^{\mathrm{T}}X, L\sqrt{2} \delta/ c_1).
\]
We know that with probability at least \(1 -  2 e^{- c_{\ref{bothlargeLCD}} n}\) that \(\hat{w}\) has \(\widehat{\LCD}_{\kappa, \gamma}(\hat{w},\alpha) \geq n^{c/\alpha}\). Therefore, we apply Lemma \ref{incompanticonc} on small ball probabilities in terms of regularized LCD. If \( \widehat{\LCD}_{\kappa, \gamma}(\hat{w}, \alpha) \geq n^{c / \alpha} \), then for \( \hat{C} \) large enough, we have
\begin{equation*}
\frac{\hat{C}L\sqrt{2} \delta}{c_{1}} \geq \frac{\hat{C}L\sqrt{2} n^{-c / \alpha}}{c_{1}} \geq \frac{\hat{C}L \sqrt{2}}{c_{1} \widehat{\LCD}_{\kappa, \gamma}(\hat{w}, \alpha)} \geq \frac{\sqrt{\alpha}}{c_{0} \widehat{\LCD}_{\kappa, \gamma}(\hat{w}, \alpha)}.
\end{equation*}
Therefore we can apply Lemma \ref{incompanticonc} with \(\kappa = n^{2c}\) and \(\gamma=1/2\), which yields
\begin{align}
    \PP \left(\abs{\hat{w}^{\mathrm{T}} X} \leq L\sqrt{2} \delta / c_1 \right) &\leq \mathcal{L}(\hat{w}^{\mathrm{T}} X, \hat{C}L \sqrt{2} \delta/ c_1) \label{eq:lastcolumnmeansum}\\
&\leq C_{\ref{incompanticonc}} \left( \frac{\hat{C}\sqrt{2}L \delta}{c_{1}^2 \gamma\sqrt{\alpha}} + e^{- \Omega(\kappa^2)} \right) + 2 e^{- c_{\ref{bothlargeLCD}} n} \label{eq:lastcolumnmeanshift}\\
\nonumber &= C_{\ref{incompanticonc}} \left(\frac{2\hat{C} \delta}{c_1^2\sqrt{\alpha}} \right) + C_{\ref{incompanticonc}}e^{- \Omega(n^{4c})} +  2e^{- c_{\ref{bothlargeLCD}} n}\\
\nonumber &= O \left(\frac{\delta}{\sqrt{\alpha}} \right).
\end{align}
Where we used that \(\delta \geq n^{-c/\alpha}\) to give us that
\[
    C_{\ref{incompanticonc}} e^{-\Omega(n^{4c})} + 2 e^{-c_{\ref{bothlargeLCD}} n} =  O \left(\frac{\delta}{\sqrt{\alpha}} \right).
\]
Similarly \( 2 e^{-c_{\ref{nocompressible}} n} = O(\delta/\sqrt{\alpha})\), so we conclude from \eqref{eq:smallballbound} that
\begin{equation*}
\PP(\mathcal{E}_i) \leq C_{\ref{main1}}\frac{\delta}{\sqrt{\alpha}}.
\end{equation*}

As \(1 \leq i \leq p-1 \) was arbitrary, we obtain the result \eqref{eq:main1.1}. As mentioned previously, an application of Cauchy interlacing immediately yields \eqref{eq:main1.2}. 
\end{proof}

\begin{corollary}\label{simple_singular_values}
    Let \( M \) and \(\Sigma\) be as in Assumptions \ref{matrix_model} and \ref{opnorm-assumptions}. Then there exists a positive constant \(c\) such that the singular values of $\Sigma^{1 / 2}M$ are simple with probability at least \(1 - 2\exp(-n^{c}) - \PP(\mathcal{E}_K^{c} )\).
\end{corollary}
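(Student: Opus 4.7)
The plan is to apply Theorem \ref{main1} with an $\alpha$ that decays as a small power of $1/n$ and a correspondingly tiny $\delta$, so that the factor $\delta/\sqrt{\alpha}$ becomes stretched-exponentially small in $n$, and then to union bound over all consecutive index pairs. The only mild care needed is to verify the admissible window $n^{-c}\leq \alpha \leq c$ for the chosen $\alpha$, and to notice that on $\mathcal{E}_K$ the event $\{\sigma_i = \sigma_{i+1}\}$ is captured by \eqref{eq:main1.2} for every positive $\delta$.

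First, I would observe that on $\mathcal{E}_K$ we have $\sigma_p(M)\neq 0$, and since $\Sigma^{1/2}$ is invertible (Assumption \ref{opnorm-assumptions}), every singular value of $\Sigma^{1/2}M$ is strictly positive. In particular, if $\sigma_i(\Sigma^{1/2}M) = \sigma_{i+1}(\Sigma^{1/2}M)$ for some $i$, then $\sigma_i^2 - \sigma_{i+1}^2 = 0 \leq \sigma_{i+1}(\Sigma^{1/2}M)\,\delta\,n^{-1/2}$ for any $\delta > 0$. Therefore
\[
\{\text{spectrum of } \Sigma^{1/2}M \text{ not simple}\} \wedge \mathcal{E}_K \subset \bigcup_{i=1}^{p-1}\mathcal{A}_i\wedge\mathcal{E}_K,
\]
where $\mathcal{A}_i := \{\sigma_i^2(\Sigma^{1/2}M) - \sigma_{i+1}^2(\Sigma^{1/2}M) \leq \sigma_{i+1}(\Sigma^{1/2}M)\,\delta\,n^{-1/2}\}$.

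Next, take $\alpha := n^{-c/2}$ and $\delta := n^{-c/\alpha} = n^{-c\, n^{c/2}}$, where $c$ is the constant supplied by Theorem \ref{main1}. For $n$ sufficiently large these satisfy $n^{-c}\leq\alpha\leq c$ and $\delta\geq n^{-c/\alpha}$, so Theorem \ref{main1} applies and gives $\PP(\mathcal{A}_i\wedge\mathcal{E}_K)\leq C_{\ref{main1}}\,\delta/\sqrt{\alpha}$ uniformly in $i$. A union bound over $1\leq i\leq p-1$, combined with $p\leq n$, then bounds the probability of the right-hand side above by
\[
n\, C_{\ref{main1}}\,\frac{\delta}{\sqrt{\alpha}} = C_{\ref{main1}}\, n^{\,1+c/4}\, n^{-c\, n^{c/2}} \leq C_{\ref{simple_singular_values}}\exp(-n^{c'})
\]
for some $c' < c/2$ and $n$ sufficiently large (adjusting $C_{\ref{simple_singular_values}}$ to absorb finitely many small-$n$ cases).

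Finally, decomposing $\PP(\text{not simple}) \leq \PP(\{\text{not simple}\}\wedge\mathcal{E}_K) + \PP(\mathcal{E}_K^c)$ yields the stated bound. There is no serious obstacle: the argument is a quantitative union bound, and the only judgment call is to take $\alpha$ to be a small polynomial in $n^{-1}$ (rather than a constant), so that $\delta = n^{-c/\alpha}$ is forced to be subexponentially small and overwhelms the factor of $n$ coming from the union bound.
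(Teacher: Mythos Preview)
Your proposal is correct and follows essentially the same approach as the paper: apply Theorem \ref{main1} with $\alpha$ a small negative power of $n$ (the paper takes $\alpha=n^{-c}$, you take $\alpha=n^{-c/2}$) and $\delta=n^{-c/\alpha}$, then union bound over the $p-1$ consecutive gaps. The only difference is cosmetic: your extra remark that $\sigma_p>0$ on $\mathcal{E}_K$ is not actually needed since the right-hand side of the gap inequality is nonnegative regardless, but it does no harm.
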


\begin{proof}
    We apply Theorem \ref{main1} with $\alpha = n^{-c}$ and $\delta = n^{-c / \alpha} = n^{-c n^{c}}$.
    \begin{align*}
        \mathbb{P}&\left( \sigma_i(\Sigma^{1/2}M) = \sigma_{i+1}(\Sigma^{1/2}M) \text{ for some } i \wedge \mathcal{E}_K \right)\\
        &\leq (p-1) \sup_{1 \leq i \leq p-1} \mathbb{P}\left( \sigma_{i}(\Sigma^{1/2}M)^2- \sigma_{i+1}(\Sigma^{1/2}M)^2 = 0 \wedge \mathcal{E}_K \right)\\
        &\leq n \sup_{1 \leq i \leq p-1} \mathbb{P}\left( \sigma_{i}(\Sigma^{1/2}M)^2 - \sigma_{i+1}(\Sigma^{1/2}M)^2 \leq \sigma_i(\Sigma^{1/2}M')\delta n^{-1 / 2} \wedge \mathcal{E}_K\right)\\
        &= \ O\left(\frac{n \delta}{\sqrt{\alpha}} \right)\\
        &= O \left( n n^{-c n^{c}} n^{c / 2} \right)\\
        &= O (\exp(-n^{c})),
    \end{align*}
concluding the proof.
\end{proof}

In the proof of the next corollary, we will need the following bound on the least singular value in the case where we only assume that the atom variable has finite fourth moment and is not necessarily subgaussian. We slightly modify the original statement in order to account for \(\Sigma^{1/2}\), which we recall is assumed to have least singular value bounded below by \(L^{-1}\).
\begin{proposition}
    [Follows from Theorem 5.1 of \cite{rudelson2008littlewoodoffordprobleminvertibilityrandom}] \label{prop:small sv control}

    Let \(M\) and \(\Sigma\) satisfy Assumptions \ref{matrix_model} and \ref{opnorm-assumptions}. For any \(\delta \geq 0\) we let \(\mathcal{F} = \set{ \sigma_{p}(\Sigma^{1/2} M) \geq \delta n^{-1/2}}\). Then there exist positive constants \(C_{\ref{prop:small sv control}},c_{\ref{prop:small sv control}} >0\) depending only on \(c_{\ref{matrix_model}}\), \(K\), \(L\) and \(m_4\) such that
    \begin{equation*}
        \mathbb{P}(\mathcal{F}^c \wedge \mathcal{E}_K) \leq C_{\ref{prop:small sv control}} \delta +e^{-c_{\ref{prop:small sv control}}n}.
    \end{equation*}
\end{proposition}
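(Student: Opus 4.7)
The statement is flagged as a consequence of Theorem~5.1 of~\cite{rudelson2008littlewoodoffordprobleminvertibilityrandom}, so the plan is a reduction to that theorem together with two small modifications: one to absorb the factor of $\Sigma^{1/2}$, and one to replace the subgaussian hypothesis in the original result by the fourth moment assumption available here.

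The first step is to eliminate $\Sigma^{1/2}$ from the picture. Since $\norm{\Sigma^{-1/2}} \leq L$ we have $\sigma_{\min}(\Sigma^{1/2}) \geq L^{-1}$, so the multiplicative inequality for singular values gives
\[
\sigma_p(\Sigma^{1/2} M) \geq \sigma_{\min}(\Sigma^{1/2})\, \sigma_p(M) \geq L^{-1} \sigma_p(M).
\]
This yields the set-theoretic inclusion $\mathcal{F}^c \subseteq \set{\sigma_p(M) < L\delta\, n^{-1/2}}$, and reduces the problem to bounding $\PP(\set{\sigma_p(M) < L\delta\, n^{-1/2}} \wedge \mathcal{E}_K)$ by $C\delta + c^n$ for suitable constants.

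The second step is to apply Theorem~5.1 of~\cite{rudelson2008littlewoodoffordprobleminvertibilityrandom} to the $n \times p$ matrix $M$. That theorem delivers a bound of exactly the form $C\delta + c^n$ for the smallest singular value of a tall i.i.d.\ matrix, and the factor of $L$ can simply be absorbed into $C$. The only subtlety is that the cited theorem is stated under a subgaussian assumption, whereas Assumption~\ref{matrix_model} provides only a bounded fourth moment. In the Rudelson--Vershynin proof, however, the subgaussian hypothesis enters in only two ways: (a) to control $\norm{M} \leq K\sqrt{n}$ on an exponentially high-probability event, and (b) to supply a uniform small-ball estimate for sums of the form $\sum_j u_j \xi_{ij}$. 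In our setting, (a) is precisely the content of $\mathcal{E}_K$, which we intersect against at the outset, and (b) is furnished by Lemma~\ref{PZ} together with the tensorization estimate Lemma~\ref{tensorization} and the compressible/incompressible decomposition already built in Sections~\ref{compressible section} and~\ref{incompressible section}. With these substitutions, the Rudelson--Vershynin argument transfers to our setting.

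The main anticipated obstacle is purely bookkeeping: verifying that every invocation of subgaussianity in~\cite{rudelson2008littlewoodoffordprobleminvertibilityrandom} can indeed be replaced either by the event $\mathcal{E}_K$ or by the fourth-moment-based anti-concentration machinery developed in this paper. A cleaner alternative, which sidesteps this issue entirely, is to appeal directly to Theorem~1.1 of~\cite{MR4255145}, which establishes essentially the same smallest-singular-value bound under only a uniform anti-concentration hypothesis on the entries (a hypothesis that is implied by our fourth-moment assumption via Lemma~\ref{PZ}).
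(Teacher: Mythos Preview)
Your proposal is correct and matches the paper's own justification: both reduce $\Sigma^{1/2}M$ to $M$ via the bound $\sigma_{\min}(\Sigma^{1/2})\geq L^{-1}$, and both observe that the Rudelson--Vershynin argument already proves the inequality intersected with $\mathcal{E}_K$ (the paper pinpoints Lemmas~3.3, 3.5 and Theorem~5.2 of~\cite{rudelson2008littlewoodoffordprobleminvertibilityrandom} as the places to check), with the subgaussian hypothesis entering only through the operator norm bound and the small-ball input, both of which survive under the fourth-moment assumption. The one detail you glossed over is that Theorem~5.1 of~\cite{rudelson2008littlewoodoffordprobleminvertibilityrandom} is for \emph{square} matrices, so one first passes to a $p\times p$ minor $\tilde M$ of $M$ and uses $\sigma_p(M)\geq\sigma_p(\tilde M)$; this is harmless and the paper handles it the same way.
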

Note that if one takes \(\delta=o(1)\) in the above proposition, one obtains that \(\mathbb{P}(\mathcal{F}^c \wedge \mathcal{E}_K) =o(1)\). We also remark that as originally stated in \cite{rudelson2008littlewoodoffordprobleminvertibilityrandom}, the theorem is a bound on \(\mathbb{P}(\mathcal{F}^c)\) with \(\mathbb{P}(\mathcal{E}_K^c)\) appearing in the bound. However, note that \cite{rudelson2008littlewoodoffordprobleminvertibilityrandom} Lemma 3.5 is also valid if one intersects the events appearing on both sides with \(\mathcal{E}_K\), while Lemma 3.3 and Theorem 5.2 are already stated in terms of intersection with \(\mathcal{E}_K\). The combination of these results gives Proposition \ref{prop:small sv control} as stated. 

\begin{details}
Essentially, Rudelson and Vershynin's proof of 
\[
\mathbb{P}(\mathcal{F}^c) \leq C\delta + c^n + \mathbb{P}(\mathcal{E}_K^c)
\]
is just proving 
\[
\mathbb{P}(\mathcal{F}^c \wedge \mathcal{E}_K) \leq C\delta + c^n
\]
and using that this implies their result via \(\mathbb{P}(\mathcal{F}^c) = \mathbb{P}(\mathcal{F}^c \wedge \mathcal{E}_k) + \mathbb{P}(\mathcal{F}^c \wedge \mathcal{E}_K^c)\). 
\end{details}

For \(1 \leq i \leq p-1\), we let
\[
    \delta_i = \sigma_i(\Sigma^{1 / 2}M) - \sigma_{i+1}(\Sigma^{1 / 2}M) \quad \text{ and } \quad \delta_{\text{min}} = \min_{1 \leq i \leq p-1} \delta_i.
\]
We now state the following corollary.
\begin{corollary}\label{cor:sing_value_diff}
   Let \(M\) and \( \Sigma \) satisfy Assumptions \ref{matrix_model} and \ref{opnorm-assumptions}, and \(C_{\ref{main1}}\), \(c\) be as in Theorem \ref{main1}. Let \(\mathcal{F}\) be as in Proposition \ref{prop:small sv control} and \(\mathcal{F}_i = \{\sigma_{i+1}(\Sigma^{1/2}M) \geq \delta n^{-1 / 2}\}\). Then for all \(n^{-c} \leq \alpha \leq c\), \(\delta \geq n^{-c / \alpha}\)
    \begin{align*}
        \sup_{1 \leq i \leq p-1} \mathbb{P}\left( \delta_i \leq \frac{1}{3}\delta n^{-1 / 2} \wedge \mathcal{F}_i \wedge \mathcal{E}_K\right) 
        &\leq C_{\ref{main1}}\delta / \sqrt{\alpha},
    \end{align*}

    Choosing \(\alpha\) to be a small constant, we have that for all \(\delta \geq n^{-C_{\ref{cor:sing_value_diff}}}\),
    \begin{align*}
        \sup_{1 \leq i \leq p-1}\mathbb{P}\left( \delta_i \leq \frac{1}{3}\delta n^{-1 / 2} \wedge \mathcal{F} \wedge \mathcal{E}_K\right) \leq C_{\ref{cor:sing_value_diff}} \delta,
    \end{align*}
    for some constant \(C_{\ref{cor:sing_value_diff}} \) depending only on \(c_{\ref{matrix_model}}\), \( m_{4} \), \(K\) and \(L\). 
    In particular, we have that
    \begin{align*}
        \delta_{\min} \geq n^{-3 / 2 - o(1)}
    \end{align*}
    with probability $1 - o(1)$.
\end{corollary}
\begin{proof}[Proof of Corollary \ref{cor:sing_value_diff}]
    Denote \(\sigma_i(\Sigma^{1/2}M)\) by \( \sigma_i\), and \(\sigma_i(\Sigma ^{1 / 2}M')\) by \(\sigma_i'\). 
    On the event $\mathcal{F}_i$ and $\delta_i \leq \frac{1}{3}\delta n^{-1 / 2}$, 
    \begin{align*}
        \frac{\sigma_i'}{\sigma_i + \sigma_i'} &= \frac{\sigma_i'}{2\sigma_i' + (\sigma_i - \sigma_i')} \geq \frac{\sigma_i'}{2\sigma_i' + \frac{1}{3}\delta n^{-1 / 2}} = \frac{1}{2 + \frac{\delta n^{-1 / 2}}{3 \sigma_i'}} \geq \frac{1}{3}.
    \end{align*}
    The last inequality follows from the observation that on $\mathcal{F}_i$, $\frac{\delta n^{-1 / 2}}{3 \sigma_i'} \leq \frac{\delta n^{-1 / 2}}{3 \sigma_{i+1}} \leq 1/3$ due to interlacing.
    Therefore,
    \begin{align*}
    \mathbb{P}\left( \delta_i \leq \frac{1}{3}\delta n^{-1 / 2} \wedge \mathcal{F}_i \wedge \mathcal{E}_K\right) &\leq \mathbb{P}\left( \delta_i \leq \frac{\sigma_i'}{\sigma_i + \sigma_i'} \delta n^{-1 / 2} \wedge \mathcal{F}_i \wedge \mathcal{E}_K\right) \\
    &\leq \mathbb{P}\left( \sigma_i^2 - \sigma_i'^2 \leq \sigma_i'\delta n^{-1 / 2} \wedge \mathcal{E}_K \right) \\
    &\leq C_{\ref{main1}}\delta / \sqrt{\alpha}
    \end{align*}
    where the final inequality follows from applying Theorem \ref{main1}.
    Combining this with the fact that \(\mathcal{F}\) implies \(\mathcal{F}_i\) for all $1 \leq i \leq p-1$ gives the result for \[ \sup_{1 \leq i \leq p-1} \mathbb{P}\left( \delta_i \leq \frac{1}{3}\delta n^{-1 / 2} \wedge \mathcal{F} \wedge \mathcal{E}_K \right). \]

    For the proof of the final statement, we take 
\(\delta = 3n^{-1-\varepsilon_n}\) (so that \( \frac{1}{3} \delta n^{-1 / 2} = n^{-3 / 2 -\varepsilon_n} \)), with \(\varepsilon_n = o(1)\) a sequence decaying slowly enough so that \(n^{-\varepsilon_n} = o(1)\) (take, for example, \(\varepsilon_n = \log(n)^{-1/2}\)) , and we take \( \alpha \) to be a small enough constant so that \(\delta \geq n^{-c/\alpha}\). We then apply a union bound to see that
    \begin{align*}
       &\mathbb{P}\left(\delta_{\text{min}} \leq \frac{1}{3}\delta n^{-1 / 2} \wedge \mathcal{F} \wedge \mathcal{E}_K\right) \\
        &\leq (p-1) \sup_{1 \leq i \leq p-1} \mathbb{P}\left(\delta_i \leq \frac{1}{3}\delta n^{-1 / 2} \wedge \mathcal{F} \wedge \mathcal{E}_K\right)\\
        &\leq n \, C_{\ref{main1}}\frac{\delta}{\sqrt{\alpha}} \\
        &= C_{\ref{cor:sing_value_diff}} n^{-\varepsilon_n} \\
	&= o(1).
    \end{align*} 
Now we apply Proposition \ref{prop:small sv control} with the above choice of \(\delta\) and the result \(\mathbb{P}(\mathcal{E}_K^c) = o(1)\) to conclude that
\begin{align*}
&\mathbb{P} \left( \delta_{\text{min}} \leq n^{-3/2 - o(1)} \right) \\
&=\mathbb{P} \left( \delta_{\text{min}} \leq \frac{1}{3} \delta n^{-1 / 2} \right) \\
&= \mathbb{P}\left(\delta_{\text{min}} \leq \frac{1}{3} \delta n^{-1 / 2} \wedge \mathcal{F} \wedge \mathcal{E}_K \right) + \mathbb{P}\left( (\mathcal{F} \wedge \mathcal{E}_K)^c \right) \\
&=o(1) + \mathbb{P}(\mathcal{F}^c \wedge \mathcal{E}_K) + \mathbb{P}(\mathcal{E}_K^c) \\
&=o(1).
\end{align*}
\end{proof}

In the case of a subgaussian atom variable, we may remove the term \( \PP(\mathcal{E}_K^{c} )\) from Corollary \ref{simple_singular_values} by combining the following two results. 

\begin{proposition}[Theorem 4.4.5, \cite{vershynin-hdp}]\label{prop:opnorm_subgaussian}
     Let \(M\) be an \(n \times p\) matrix for \(n \geq p\) with iid entries and subgaussian atom variable with mean zero and subgaussian moment \(q\). Then there exist positive constants \(C_{\ref{prop:opnorm_subgaussian}}\) and \(c_{\ref{prop:opnorm_subgaussian}}\) depending only on \(q\) such that 
     \[
     \mathbb{P}(\norm{M} > C_{\ref{prop:opnorm_subgaussian}} \sqrt{n}) \leq 2 e^{-c_{\ref{prop:opnorm_subgaussian}}n}.
     \]
\end{proposition}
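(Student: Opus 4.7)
The plan is to follow the standard $\varepsilon$-net plus concentration argument, which is the template proof for operator norm bounds of matrices with independent subgaussian entries. Concretely, write
\[
\|M\| = \sup_{x \in S^{p-1},\, y \in S^{n-1}} \langle Mx, y\rangle,
\]
and replace the unit spheres by $(1/4)$-nets $\mathcal{N}_p \subset S^{p-1}$ and $\mathcal{N}_n \subset S^{n-1}$ of cardinalities at most $9^p$ and $9^n$ respectively (by a standard volumetric argument). A routine approximation lemma then gives $\|M\| \leq 2 \max_{x \in \mathcal{N}_p,\, y \in \mathcal{N}_n} \langle Mx, y\rangle$, reducing everything to a pointwise tail bound for a bilinear form on a fixed pair $(x,y)$.

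For fixed unit vectors $x, y$, the bilinear form
\[
\langle Mx, y\rangle = \sum_{i=1}^n \sum_{j=1}^p y_i x_j \xi_{ij}
\]
is a weighted sum of i.i.d.\ mean-zero subgaussians with weights $w_{ij} = y_i x_j$ satisfying $\sum_{i,j} w_{ij}^2 = \|x\|^2 \|y\|^2 = 1$. By Hoeffding's inequality for subgaussian sums (using that each $\xi_{ij}$ has subgaussian moment controlled by $Q$), there exists a constant $c_0 = c_0(Q) > 0$ such that
\[
\mathbb{P}\bigl(\langle Mx, y\rangle > t\bigr) \leq 2\exp(-c_0 t^2)
\]
for every $t > 0$. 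Applying this with $t = \tfrac{1}{2} C_{\ref{prop:opnorm_subgaussian}} \sqrt{n}$ and taking a union bound over $\mathcal{N}_p \times \mathcal{N}_n$, whose cardinality is at most $9^{n+p} \leq 81^n$, yields
\[
\mathbb{P}\!\left(\|M\| > C_{\ref{prop:opnorm_subgaussian}} \sqrt{n}\right)
\leq 2 \cdot 81^n \exp\!\left(-\tfrac{c_0}{4} C_{\ref{prop:opnorm_subgaussian}}^2 n\right).
\]

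Choosing $C_{\ref{prop:opnorm_subgaussian}} = C_{\ref{prop:opnorm_subgaussian}}(Q)$ large enough that $\tfrac{c_0}{4} C_{\ref{prop:opnorm_subgaussian}}^2 > \log 81$ gives the claimed exponential decay with $c_{\ref{prop:opnorm_subgaussian}} = \tfrac{c_0}{4} C_{\ref{prop:opnorm_subgaussian}}^2 - \log 81 > 0$, which depends only on $Q$. There is no real obstacle here: the one thing to be mindful of is that the net cardinality grows as $9^{n+p}$, and since we assume $p \leq n$, this is absorbed into $81^n$, so the exponent in $n$ can be made negative by taking $C_{\ref{prop:opnorm_subgaussian}}$ large. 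The argument is standard and can be cited directly from \cite{vershynin-hdp}, but the above sketch shows the essential mechanism.
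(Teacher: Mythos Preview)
Your argument is correct and is precisely the standard $\varepsilon$-net plus subgaussian concentration proof. Note, however, that the paper does not give its own proof of this proposition: it is stated as a direct citation of Theorem~4.4.5 in \cite{vershynin-hdp} and used as a black box. Your sketch faithfully reproduces the mechanism of that cited proof, so there is nothing further to compare.
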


\begin{proposition}[Theorem 1.1, \cite{rudelson2009smallestsingularvaluerandom}]\label{prop:sigma_n_subgaussian}
 Let \(M\) be an \(n \times p\) matrix for \(n \geq p\) with iid entries and subgaussian atom variable with mean zero and subgaussian moment \(q\). Then there exist positive constants  \(C_{\ref{prop:sigma_n_subgaussian}}, c_{\ref{prop:sigma_n_subgaussian}}\) depending only on \(q\) such that for every \(\varepsilon \geq 0\), we have
\[ \PP \left( \sigma_p(M) \leq \varepsilon(\sqrt{n} - \sqrt{p - 1} ) \right) \leq (C_{\ref{prop:sigma_n_subgaussian}}\varepsilon)^{n - p + 1}  + 2 e^{-c_{\ref{prop:sigma_n_subgaussian}}n}.\]
\end{proposition}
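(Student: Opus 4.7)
The plan is to adapt the small-ball and $\varepsilon$-net strategy that underlies Rudelson and Vershynin's work on invertibility of random matrices, combining ingredients very similar to those developed earlier in the paper. Starting from $\sigma_p(M) = \inf_{x \in S^{p-1}} \|Mx\|$, I would partition the sphere into compressible and incompressible unit vectors and handle each class separately, then absorb the two failure events into the right-hand side of the claimed estimate.

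For compressible $x$, a net argument suffices. Subgaussianity of the entries implies that for any fixed unit $x$, $\|Mx\|^2$ is strongly concentrated around $n$, so $\PP(\|Mx\| \leq c\sqrt{n}) \leq e^{-c' n}$ with constants depending only on the subgaussian moment $Q$. Since $\comp_p(c_0, c_1)$ admits an $O(c_1)$-net of cardinality at most $(C/(c_0 c_1))^{c_0 p}$ by the bound used in the proof of Lemma \ref{compcomp}, choosing $c_0, c_1$ small enough so that the entropy loses to the concentration exponent, a union bound yields $\inf_{x \in \comp_p(c_0, c_1)} \|Mx\| \geq c\sqrt{n} \geq c(\sqrt{n} - \sqrt{p-1})$ outside an event of probability $e^{-c'n}$, which is absorbed into the $e^{-c_{\ref{prop:sigma_n_subgaussian}} n}$ error.

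For incompressible $x$ I would use the ``invertibility via distance'' inequality: denoting by $M_1, \dots, M_p$ the columns of $M$ and $H_k = \vspan(M_j : j \neq k)$,
\[
\inf_{x \in \incomp_p(c_0, c_1)} \|Mx\| \geq \frac{c_1}{\sqrt{p}} \min_{1 \leq k \leq p} \dist(M_k, H_k).
\]
Conditioning on $H_k$ (which is independent of $M_k$) and letting $P_k$ be the orthogonal projection onto $H_k^\perp$, which almost surely has dimension at least $n - p + 1$, one has $\dist(M_k, H_k) = \|P_k M_k\|$. Expanding $P_k M_k$ in an orthonormal basis $w^{(1)}, \dots, w^{(n-p+1)}$ of $H_k^\perp$ and applying the Tensorization Lemma (Lemma \ref{tensorization}) together with the LCD-based single-coordinate estimate of Lemma \ref{incompanticonc}, I would aim for a bound of the form
\[
\PP \bigl( \|P_k M_k\| \leq \varepsilon \sqrt{n - p + 1} \,\big|\, H_k \bigr) \leq (C \varepsilon)^{n - p + 1}
\]
on the event that each $w^{(j)}$ has regularized LCD at least a suitable polynomial in $n$. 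Unconditioning, union-bounding over the $p$ choices of $k$, and using the elementary inequality $\sqrt{n - p + 1} \geq \sqrt{n} - \sqrt{p - 1}$ would produce the $(C \varepsilon)^{n - p + 1}$ contribution.

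The main obstacle, and really the whole technical content of the argument, is verifying that the basis $\{w^{(j)}\}$ of $H_k^\perp$ really does have large regularized LCD with probability at least $1 - e^{-cn}$, so that the anti-concentration estimate above is available. This is the analogue, for the null space of the column-deleted matrix, of the LCD control we established for singular vectors in Theorem \ref{bothlargeLCD}: one would stratify the relevant sphere by regularized LCD level sets as in Lemma \ref{lem:rlcd_level_net}, build an $\varepsilon$-net on each stratum, and union-bound the resulting cardinality against the anti-concentration of the defining equations $M^{(k)\!\top} w = 0$ for $H_k^\perp$, where $M^{(k)}$ denotes $M$ with its $k$th column removed. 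The exponent-counting is delicate but parallels Proposition \ref{approx sv}; in the subgaussian regime it produces the clean single-exponential tail $e^{-c_{\ref{prop:sigma_n_subgaussian}} n}$, completing the proof.
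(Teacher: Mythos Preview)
The paper does not give its own proof of this proposition; it simply quotes Theorem~1.1 of Rudelson--Vershynin \cite{rudelson2009smallestsingularvaluerandom} as a black box. So the comparison is really between your sketch and the original Rudelson--Vershynin argument.

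Your high-level architecture (compressible/incompressible split, then ``invertibility via distance'' for the incompressible part) matches theirs, but there is a genuine gap in the incompressible step. You propose to write $P_k M_k$ in an orthonormal basis $w^{(1)},\dots,w^{(n-p+1)}$ of $H_k^\perp$ and then apply the Tensorization Lemma (Lemma~\ref{tensorization}) to the coordinates $\langle w^{(j)},M_k\rangle$. That lemma requires the coordinates to be independent, and for a general subgaussian atom variable they are not: each $\langle w^{(j)},M_k\rangle$ is a linear form in the \emph{same} random vector $M_k$, and orthogonality of the $w^{(j)}$ yields independence only in the Gaussian case. So the tensorization step as written does not go through. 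Rudelson--Vershynin handle $\mathcal{L}(P_E X,\varepsilon\sqrt{m})\le (C\varepsilon)^m$ for a fixed $m$-dimensional subspace $E$ by a direct Esseen-type characteristic-function argument, not by coordinate-wise tensorization.

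A second, related point: the LCD machinery you invoke is unnecessary here and is not what Rudelson--Vershynin use in the rectangular paper. LCD control of the normal direction is the crux of their \emph{square} matrix paper \cite{rudelson2008littlewoodoffordprobleminvertibilityrandom}, where the codimension is one; in the rectangular setting the $(n-p+1)$ extra dimensions already give the exponent, and only a crude uniform single-coordinate anti-concentration bound is needed. Finally, your pointwise inequality $\inf_{\incomp}\|Mx\|\ge (c_1/\sqrt{p})\min_k\dist(M_k,H_k)$, while true, costs a factor $p$ after a union bound over $k$; in the nearly-square regime (say $n=p$, so the exponent $n-p+1=1$) this cannot be absorbed into the constant $C_{\ref{prop:sigma_n_subgaussian}}$. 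The averaging version of the distance lemma (Lemma~3.5 in \cite{rudelson2008littlewoodoffordprobleminvertibilityrandom}) avoids this loss.
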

Taking \( \varepsilon = 0 \) in Proposition \ref{prop:sigma_n_subgaussian} (or alternatively using \cite[Theorem 1.1]{MR4255145}) and combining with Proposition \ref{prop:opnorm_subgaussian} gives that in the case of a subgaussian atom variable, \(\mathbb{P}(\mathcal{E}_K^c) = O(e^{-\tilde{c}n})\) for some \( \tilde{c} > 0 \) (depending on the subgaussian moment \(q\)). This fact along with Corollary \ref{simple_singular_values} immediately yields Corollary \ref{subgauss simple}.

Furthermore, in the case where \( M \) has subgaussian atom variable and is genuinely rectangular one can use Proposition \ref{prop:sigma_n_subgaussian} to further refine the result of Corollary \ref{cor:sing_value_diff} to obtain an exponential bound on \(\mathbb{P}(\mathcal{F}^c \wedge \mathcal{E}_K)\).

\begin{corollary}\label{cor:rectangular_implies_exp_prob} 
   Let \( M \) and \( \Sigma \) satisfy Assumptions \ref{matrix_model} and \ref{opnorm-assumptions} with subgaussian atom variable \(\xi\). Let \(c\) be as in Theorem \ref{main1}. If the aspect ratio \(\lambda\) satisfies $1 - \lambda \geq \frac{1}{\log\log n}$, then for \(n^{-c} \leq \alpha \leq c\) and \( n^{-c / \alpha} \leq \delta \leq 1\), one has
    \begin{align*}
        \sup_{1 \leq i \leq p-1}\mathbb{P}\left( \delta_i \leq \delta n^{-1 / 2} \right) = O(\delta / \sqrt{\alpha}) + 2\exp(-c_{\ref{cor:rectangular_implies_exp_prob}}n).
    \end{align*}
\end{corollary}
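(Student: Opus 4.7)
The plan is to refine Corollary \ref{cor:sing_value_diff} by upgrading the control on $\mathbb{P}(\mathcal{E}_K^c)$ and $\mathbb{P}(\mathcal{F}^c \wedge \mathcal{E}_K)$ from $o(1)$ to exponentially small bounds, exploiting both the subgaussian hypothesis and the rectangular condition $1-\lambda \geq 1/\log\log n$. Since the stated inequality is trivial for $\delta \geq 1$, we may restrict to $\delta \leq 1$ throughout. I would then apply Corollary \ref{cor:sing_value_diff} with $\delta$ replaced by $4\delta$ and the event $\mathcal{F} = \{\sigma_p(\Sigma^{1/2}M) \geq 4\delta n^{-1/2}\}$, yielding
\[
\sup_{1 \leq i \leq p-1} \mathbb{P}(\delta_i \leq \delta n^{-1/2}) \leq O(\delta/\sqrt{\alpha}) + \mathbb{P}(\mathcal{F}^c \wedge \mathcal{E}_K) + \mathbb{P}(\mathcal{E}_K^c).
\]
The bound $\mathbb{P}(\mathcal{E}_K^c) \leq \exp(-cn)$ follows exactly as in the derivation of Corollary \ref{subgauss simple}: combine Proposition \ref{prop:opnorm_subgaussian} for the operator norm piece with Proposition \ref{prop:sigma_n_subgaussian} at $\varepsilon=0$ for the invertibility piece.

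The main step, where the rectangular hypothesis becomes essential, is controlling $\mathbb{P}(\mathcal{F}^c)$. Using $\sigma_p(\Sigma^{1/2}M) \geq L^{-1}\sigma_p(M)$, it suffices to bound $\mathbb{P}(\sigma_p(M) \leq 4L\delta n^{-1/2})$. I would apply Proposition \ref{prop:sigma_n_subgaussian} with $\varepsilon$ chosen so that $\varepsilon(\sqrt{n} - \sqrt{p-1}) = 4L\delta n^{-1/2}$, and combine with the elementary estimate
\[
\sqrt{n} - \sqrt{p-1} \geq \frac{n-p+1}{2\sqrt{n}} \geq \frac{(1-\lambda)\sqrt{n}}{2} \geq \frac{\sqrt{n}}{2\log\log n},
\]
which gives $\varepsilon \leq 8L\delta \log\log n / n$. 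For $\delta \leq 1$ and $n$ sufficiently large, the quantity $C_{\ref{prop:sigma_n_subgaussian}}\varepsilon$ is at most $1/e$, so that $(C\varepsilon)^{n-p+1} \leq \exp\!\bigl(-(n-p+1)\log(1/(C\varepsilon))\bigr)$. Invoking $n-p+1 \geq n/\log\log n$ together with $\log(1/(C\varepsilon)) = \Omega(\log n)$ makes this term much smaller than $\exp(-c_{\ref{cor:rectangular_implies_exp_prob}} n)$ for any fixed constant $c_{\ref{cor:rectangular_implies_exp_prob}}$; Proposition \ref{prop:sigma_n_subgaussian} then yields $\mathbb{P}(\mathcal{F}^c) \leq \exp(-c_{\ref{cor:rectangular_implies_exp_prob}} n)$.

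The main point to verify is simply that $C_{\ref{prop:sigma_n_subgaussian}}\varepsilon < 1$ uniformly over the admissible range of $\delta$, which is immediate from $\delta \leq 1$ and $\log\log n/n = o(1)$. I anticipate no serious technical obstacle beyond the bookkeeping in the last paragraph; the interest of the proof is really in how the relatively weak rectangular hypothesis $1-\lambda \geq 1/\log\log n$ is just enough, via the $(C\varepsilon)^{n-p+1}$ factor in Proposition \ref{prop:sigma_n_subgaussian}, to convert the $o(1)$ bound on $\mathbb{P}(\mathcal{F}^c)$ used in Corollary \ref{cor:sing_value_diff} into an exponential one. Combining the three estimates then gives the bound claimed in Corollary \ref{cor:rectangular_implies_exp_prob}.
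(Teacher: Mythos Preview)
Your proposal is correct and follows essentially the same approach as the paper: apply Corollary \ref{cor:sing_value_diff}, then use Proposition \ref{prop:sigma_n_subgaussian} together with the rectangular hypothesis to show that $\mathbb{P}(\mathcal{F}^c)$ is exponentially small, and bound $\mathbb{P}(\mathcal{E}_K^c)$ via the subgaussian results. The only differences are cosmetic---the paper writes $\sqrt{n}-\sqrt{p-1}\geq \sqrt{n}(1-\sqrt{\lambda})\geq \sqrt{n}\,\nu/2$ rather than your $\sqrt{n}-\sqrt{p-1}\geq (n-p+1)/(2\sqrt{n})$, and is terser about the $4\delta$ substitution and the restriction to $\delta\leq 1$---but the argument is the same.
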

\begin{proof}
   Defining \(\nu = 1 - \lambda\) and using Proposition \ref{prop:sigma_n_subgaussian}, we have that
    \begin{align*}
       \mathbb{P}(\sigma_p(\Sigma^{1 / 2}M) \leq \delta n^{- 1 /2})
        &\leq \mathbb{P}\left( \sigma_p(M) \leq L\delta n^{-1 / 2} \right) + \mathbb{P}(\mathcal{E}_K^{c})\\
        &\leq \left( \frac{CL\delta}{\sqrt{n}(\sqrt{n} - \sqrt{p - 1})} \right)^{n - p + 1} + \exp(-c_{\ref{prop:sigma_n_subgaussian}}n)\\
        &\leq \left( \frac{CL\delta}{n(1 - \sqrt{\lambda})} \right)^{(1 - \lambda)n+1} + \exp(-c_{\ref{prop:sigma_n_subgaussian}}n)\\
        &\leq \left( \frac{CL\delta}{n(\nu / 2)} \right)^{\nu n} + \exp(-c_{\ref{prop:sigma_n_subgaussian}}n) 
    \end{align*}
    The choice of \(\delta\) and \( \nu \) guarantees that \(\left( \frac{CL\delta}{n(\nu / 2)} \right)^{\nu n} = o(\exp(-n))\), and that Corollary \ref{cor:sing_value_diff} applies. 
\end{proof}

\begin{remark}\label{rem:gen_main1}
    We remark that Theorem \ref{main1} can be extended to rank-1 perturbations of \(M\).  More precisely, in Theorem \ref{main1}, one can replace \(M\) by \(M + \eta \hat{y} \hat{z}^{\mathrm{T}}\), where \(\hat{y}, \hat{z}\) are deterministic unit vectors in \(\mathbb{R}^n\) and \(\mathbb{R}^p\), respectively, and \(\eta \in \mathbb{R}\) deterministic with \(|\eta| \leq \exp(o(n))\). Note that \(\|\eta \hat{y} \hat{z}^{\mathrm{T}}\| = |\eta|\).  As a consequence, Corollaries  \ref{simple_singular_values} and \ref{cor:sing_value_diff} both hold upon replacing \(M\) with \(M + \eta \hat{y} \hat{z}^{\mathrm{T}}\). The proof only requires minor modifications.  To illustrate this, we provide a complete proof of Lemma \ref{lem:generalized_compressible}, which is the analogue of Lemma \ref{compcomp} in the case of this finite rank perturbation.  We then sketch the analogous changes for the remainder of the argument.
  
	We consider two cases, depending on the size of $\eta$.
	
	\noindent {\bf Case 1: }	
    First, suppose that \(|\eta| \leq 3 L K \sqrt{n}\). In this case, the norm of \(M + \eta \hat{y} \hat{z}^{\mathrm{T}}\) is of the same order as $\|M\|$ and the entire argument is identical up to some slight changes to the constants.  For example, in each place where one considers a net of possible singular values of \(M\), an equally fine net of possible singular values of \(M + \eta \hat{y} \hat{z}^{\mathrm{T}}\) is now 4 times as large (or 16 times as large in the case of the squared singular values). Such a difference in net size is immaterial. In this case, we do not need to make use of the rank-1 condition, so in fact, the argument applies to any perturbation with sufficiently small operator norm. For more details, see Remark \ref{rem:mean non zero}.\\
    
    \noindent {\bf Case 2: }  We now consider the case that \(3LK \sqrt{n} \leq \abs{\eta} \leq \exp(o(n))\), omitting \(\Sigma\) for ease of presentation.  This presents several difficulties.  In this setting, simply replacing \(M\) with \(M + \eta \hat{y} \hat{z}^{\mathrm{T}}\) would require significantly finer nets to account for the larger norm, yet the probability bounds remain the same, which quickly renders most of the arguments infeasible.  There is good reason to expect this breakdown as for large \(\eta\), the dominant left and right singular vectors are near \(w\) and \(z\), which may not have the same properties as the singular vectors of a centered random matrix.       
    
    We begin by establishing a lower bound on the gap between $\sigma_1$ and $\sigma_2$.  
    On the event \( \left\|M\right\| \leq K\sqrt{n}\), by Weyl's inequality (\cite{MR1477662} Theorem III.2.1),
    \begin{align*}
    	\sigma_1(M + \eta \hat{y} \hat{z}^{\mathrm{T}}) \geq |\eta| - \left\|M\right\| \geq 2KL\sqrt{n}, \\
    	\sigma_2(M + \eta \hat{y} \hat{z}^{\mathrm{T}}) \leq \sigma_2(\eta \hat{y} \hat{z}^{\mathrm{T}}) + \left\|M\right\| \leq KL\sqrt{n},
    \end{align*}
    (note that the second inequality holds independent of the value of $|\eta|$, as \(\eta \hat{y} \hat{z}^{\mathrm{T}}\) is rank-1)
    so \(|\sigma_1(M) - \sigma_2(M)| \geq KL\sqrt{n}\). 
    In several steps of the proof of Theorem \ref{thm:main}, for example in Proposition \ref{nocompressible}, Lemma \ref{noincomp-smallLCD}, Proposition \ref{approx sv} and Theorem \ref{bothlargeLCD}, we assume that our singular values are of order $KL \sqrt{n}$.  Due to the substantial gap between $\sigma_1$ and $\sigma_2$, for the remainder of the argument, we can focus our attention on the singular values which have size at most \(KL \sqrt{n}\), which resolves one aspect of this problem.  For example, it suffices to prove Theorem \ref{bothlargeLCD} restricted to the case where \((u,v)\) is a singular vector pair associated to some singular value \(\sigma_i\) where \(2 \leq i \leq p\).

    However, even when restricting our attention to smaller singular values, the norm of \(M + \eta \hat{y} \hat{z}^{\mathrm{T}}\) still appears in our arguments.  For example, in Section \ref{incompressible section}, the covering arguments for a singular vector pair \((u,v)\) do not make use of any knowledge of other singular pairs, so we apply the argument to the entire unit sphere for each singular vector pair, in which case, naively, we must invoke the norm of \(M + \eta \hat{y} \hat{z}^{\mathrm{T}}\) when deciding on the size of the net.  To address this issue, we now rely on the rank-1 structure of $\eta \hat{y} \hat{z}^{\mathrm{T}}$.    
    
    Let \(T \subset S^{p-1}\) (for example,  \(T\) may be the set compressible vectors, or a set of incompressible vectors with small LCD).  The key observation is that the image of the unit sphere under $\eta \hat{y} \hat{z}^{\mathrm{T}}$ is contained in \( \{ t \hat{y}  : \abs{t}\leq \eta \}\) and therefore admits an $\varepsilon$-net of small cardinality due to its dimension. For example, to naively control 
    \[
    \mathbb{P}\left(\sup_{x \in T} \left\|(M + \eta \hat{y} \hat{z}^{\mathrm{T}})x \right\| < \varepsilon\right)
    \]
    we consider an \(\varepsilon(2 \|M + \eta \hat{y} \hat{z}^{\mathrm{T}}\|)^{-1}\)-net of \(T\), that we denote by \(\mathcal{N}\), and use the union bound to conclude that
    \[
    \mathbb{P}(\sup_{x \in T} \left\|(M + \eta \hat{y} \hat{z}^{\mathrm{T}})x \right\| < \varepsilon) \leq |\mathcal{N}| \sup_{x_0 \in \mathcal{N}} \mathbb{P}( \left\|(M + \eta \hat{y} \hat{z}^{\mathrm{T}})x_0 \right\| < 2 \varepsilon).
    \]  
    This is problematic since when \(\eta\) is very large, \(\mathcal{N}\) would have cardinality approximately \(\eta^p\) times larger than an \(\varepsilon\)-net of the same set \(T\). Yet, our anti-concentration bound will remain the same size.  However, we can instead consider an \(\varepsilon(2 K \sqrt{n})^{-1}\)-net of $T$, which we call $\mathcal{N}'$, and a $\varepsilon/2$-net of $[- \eta, \eta] \subset \mathbb{R}$, which we denote by $\mathcal{N}''$.  Now, we observe that if $x \in T$ is such that 
    \[
    \left\|(M + \eta \hat{y} \hat{z}^{\mathrm{T}})x \right\| < \varepsilon
    \]
    then there exists a $y \in \mathcal{N}'$ and $t \in \mathcal{N}''$ such that
    \begin{align*} 
    \| (M + \eta \hat{y} \hat{z}^{\mathrm{T}}) x - M y - t y\| \leq \|M\| \|x - y\| + |\eta z^{\mathrm{T}} x - t| \|y\| \leq \varepsilon.
    \end{align*}
	Thus, 
	 \[
	\mathbb{P}(\sup_{x \in T} \left\|(M + \eta \hat{y} \hat{z}^{\mathrm{T}})x \right\| < \varepsilon) \leq |\mathcal{N}'| |\mathcal{N}''|\sup_{x \in \mathcal{N}', t \in \mathcal{N}''} \mathbb{P}( \left\|M x + t y \right\| < 2 \varepsilon).
	\]  
Crucially, \(|\mathcal{N}'|\) is much smaller than \( |\mathcal{N}| \) and as \(\mathcal{N}''\) is a net over a one-dimensional space, its cardinality is insignificant (so long as the size of \( \eta \) is sufficiently small), leading to only a small increase in the number of points we must net over in our arguments.

To concretely illustrate this idea, we state and prove Lemma \ref{compcomp} in the case where one replaces \( M \) with \( M + \eta \hat{y} \hat{z}^{T} \).

\begin{lemma}\label{lem:generalized_compressible}
    Let \( M \) satisfy Assumption \ref{matrix_model} and \( \Sigma \) satisfy Assumption \ref{opnorm-assumptions}. Let \( \eta = \eta(n) \leq \exp (o(n)) \) and \( \hat{z} \in \mathbb{R}^{p}, \hat{y}\in \mathbb{R}^{n}\) be unit vectors. Define \( \hat{M} = M + \eta \hat{y} \hat{z}^{T} \) and the event \( \mathcal{E}_{K}' \) to be 
    \begin{equation*}
        \left\{\left\|M\right\| \leq K \sqrt{n} \right\} \wedge \left\{ \sigma_{p} (\hat{M}) \neq 0 \right\}.
    \end{equation*}
    Then we have that
\begin{equation}\label{mod_compeq-p}
\sup_{ 0 \leq \sigma^{2} \leq K^{2}L^{2} n} \PP \set{ \inf_{v \in \comp_p(c_0,c_1)} \norm{\hat{M}^{\mathrm{T}} \Sigma \hat{M} v - \sigma^2 v} \leq c_{\ref{lem:generalized_compressible}} n \wedge \mathcal{E}_K' } \leq  2 e^{-c_{\ref{lem:generalized_compressible}} n} 
\end{equation}
and 
\begin{align}\label{mod_compeq-n}
    \sup_{ 0 \leq \sigma^{2} \leq K^{2}L^{2} n} \PP \set{ \inf_{\Sigma^{1 / 2}u/ \norm{\Sigma^{1 / 2}u} \in \comp_n(c_0,c_1) } \norm{\Sigma^{1 / 2}\hat{M} \hat{M}^{\mathrm{T}} \Sigma^{1 / 2} u - \sigma^2 u} \leq c_{\ref{lem:generalized_compressible}} n \wedge \mathcal{E}_K' } \\
\nonumber \leq 2 e^{-c_{\ref{lem:generalized_compressible}} n},
\end{align}
for some sufficiently small constant \( c_{\ref{lem:generalized_compressible}} > 0 \).
\end{lemma}
\begin{proof}
    Let the value of \(\sigma^2 \in [0, K^2L^2n]\) remain fixed throughout the proof. We will assume that \( \mathcal{E}_{K}'\) holds, so that all events below are understood to be intersected with \( \mathcal{E}_{K}' \). As in the proof of Lemma \ref{compcomp}, we will let \( \mathcal{M} \) and \( \mathcal{N} \) be \( 12c_1 \) nets of \( \comp_p(c_0,c_1) \) and \( \comp_n(c_0,c_1) \) respectively, and we can assume that \( \mathcal{M} \) and \( \mathcal{N} \) consist only of sparse unit vectors. Furthermore, we will let \( \mathcal{R}_{1}, \mathcal{R}_{2}\) be \( c_1 \)-nets of \( \text{Span}(\hat{z})\cap B(0, \eta^2 L^2 K \sqrt{n}) \) and \( \text{Span}(\hat{y}) \cap B(0, \eta) \), respectively. Notice that
\begin{align}\label{eq:Mhat expansion}
    \hat{M}^{T} \Sigma \hat{M}v 
    &= M^{T} \Sigma (M v +  \hat{y} (\eta \hat{z}^{T} v)) + (\eta \hat{y}^T  \Sigma M v + \eta^{2} \hat{y}^T \Sigma \hat{y} \hat{z}^T v)\hat{z}.
\end{align}
Let \(z'= (\eta \hat{y}^T  \Sigma M v + \eta^{2} \hat{y}^T \Sigma \hat{y} \hat{z}^T v)\hat{z}\) and \(y' = (\eta \hat{z}^T v)\hat{y}\). Note that by the Cauchy-Schwarz inequality and Assumptions \ref{matrix_model} and \ref{opnorm-assumptions}, \(\norm{y'} \leq \abs{\eta}\), while \( \norm{z'} \leq \eta^2 L^2 K \sqrt{n}\). This implies that if there exists some \( v \in \comp_p(c_0, c_1) \) for which the event in equation \eqref{mod_compeq-p} holds, then there exists some \( v_0 \in \mathcal{M}, z_0 \in \mathcal{R}_{1}, y_0 \in \mathcal{R}_{2}, \) such that \(\norm{v_0-v} \leq c_1\), \(\norm{y_0-y'} \leq c_1\) and \(\norm{z_0-z'} \leq c_1\), and 
\begin{align*}
    &\left\|M^{T} \Sigma (Mv_0 + y_0 ) + z_0 - \sigma^2 v_0\right\| \\
    &\leq \norm{\hat{M}^{T} \Sigma \hat{M} v - \sigma^{2} v} + \norm{M^T \Sigma \left( M (v_0-v) + (y_0-y') \right)} +\norm{z_0-z'} + \sigma^2 \norm{v_0-v} \\
    &\leq c_{\ref{lem:generalized_compressible}} n + 12 c_1 L^2K^2 n + c_1 K L^2 \sqrt{n} + c_1 + 12 \sigma^2 c_1 \\
    &\leq \left( c_{\ref{lem:generalized_compressible}} + 26c_1 L^2 K^2 \right) n
\end{align*}
Therefore, we can conclude the proof by bounding
\begin{equation*}
    \mathbb{P}\left\{\left\|M^{T} \Sigma (Mv_0 + y_0 ) + z_0 + \sigma^2 v_0\right\| \leq (c_{\ref{lem:generalized_compressible}} + 26 K^{2} L^{2} c_1)n \right\}.
\end{equation*}
for fixed \(v_0,y_0\) and \(z_0\), and then taking a union bound over the nets \(\mathcal{M}, \mathcal{R}_1\) and \(\mathcal{R}_2\). 
As in Lemma \ref{compcomp}, we may assume without loss of generality, that \( v_0 \) is supported on the first \( \left\lfloor c_0 p\right\rfloor \) coordinates. With this in mind, we rewrite the equation using the decomposition \( M = \begin{pmatrix} X & Y \end{pmatrix} \) where \(X\) is an \(n \times \lfloor c_0p \rfloor\) matrix and \(Y\) is an \( n \times \lceil(1-c_0)p \rceil \) matrix, so that
\begin{equation*}
    \left\|M^{T} \Sigma (Mv_0 + y_0 ) + z_0 + \sigma v_0\right\|
    = \left\| \begin{pmatrix} X^{T} \\ Y^{T} \end{pmatrix} \Sigma \left(\begin{pmatrix} X & Y \end{pmatrix} v_0 + y_0 \right) + z_0 + \sigma v_0\right\|.
\end{equation*}
Now, recalling that \( v_0 \) is only supported on the first \( \left\lfloor c_0 p\right\rfloor \) coordinates, we have that
\begin{equation*}
    \left(\begin{pmatrix} X & Y \end{pmatrix}v_0\right)_k = \sum_{i = 1}^{\left\lfloor c_0 p\right\rfloor} X_{ki} v_{0i}.
\end{equation*}
Since we have that each \( X_{ki} \) is iid and independent of \( Y \), this implies that \( \left(\begin{pmatrix} X & Y \end{pmatrix} v_0\right)_{k} \) are iid, and independent of \( Y \). If we denote \( v_0'\in \mathbb{R}^{\left\lfloor c_0 p\right\rfloor}  \) such that \( v_{0j} = v_{0j}'  \) for all \( j = 1, \ldots, \left\lfloor c_0 p\right\rfloor \) (so that \( (Mv_0)_{k} = (Xv_0' )_{k} \)), then by Lemma \ref{sum anticonc} we have:
\begin{equation*}
    \mathcal{L}\left((X v_0')_{k}, \frac{1}{2} \right)\leq \tilde{c},
\end{equation*}
for some constant \( \tilde{c} \) depending on \( m_4 \).
Then, the Tensorization Lemma can be applied to the \( n \) rows of \( X \) to give:
\begin{equation*}
    \mathcal{L}(X v_0', \varepsilon \sqrt{n} ) \leq e^{-c_{\ref{lem:generalized_compressible}}' n}
\end{equation*}
for some constants \( \varepsilon, c_{\ref{lem:generalized_compressible}}'  \) depending only on \( \tilde{c} \). By taking \( \hat{c}_{\ref{lem:generalized_compressible}} = \min \left\{c_{\ref{lem:generalized_compressible}}' , \varepsilon\right\} \), and looking at the concentration around \( -y_0  \), this tells us that
\begin{equation*}
    \mathbb{P}\left( \left\|X v_0' + y_0 \right\| \leq \hat{c}_{\ref{lem:generalized_compressible}} \sqrt{n} \right) \leq e^{-\hat{c}_{\ref{lem:generalized_compressible}} n}.
\end{equation*}
We can now condition on \( X \) and the event that \( \left\|X v_0' + y_0  \right\|  \geq  \hat{c}_{\ref{lem:generalized_compressible}} \sqrt{n}\), so that \( \left\| \Sigma(X v_0' + y_0) \right\| \geq L^{-2} \hat{c}_{\ref{lem:generalized_compressible}} \sqrt{n} \). Now, 
\begin{equation*}
    \Sigma(X v_0' + y_0)  / \left\| \Sigma(X v_0' + y_0) \right\|
\end{equation*}
is a unit vector. Denoting \( P \) the projection onto the last \( p - \left\lfloor c_0 p\right\rfloor \) coordinates, and \( x = \Sigma (Xv_0' + y_0 ) \),
\begin{align*}
    \left\|M^{T}( \Sigma (Xv_0' + y_0 )) + z_0 - \sigma^{2} v_0\right\| 
    &\geq \left\|P\left(M^{T}x + z_0 - \sigma^{2} v_0\right)\right\| \\
    &= \left\|Y^{T}x + Pz_0\right\|\\
    &= \left\| x \right\|\left\|Y^{T}( x / \left\|x\right\| ) + Pz_0 / \left\|x\right\|\right\|.
\end{align*}
Due to our conditioning, \( x \) is fixed 
and the same argument (this time looking at concentration around \( -Pz_0 / \left\|x\right\|  \)) yields:
\begin{equation*}
    \mathbb{P}\left( \left\|Y^{T}(x / \left\|x\right\|) + Pz_0 / \left\|x\right\| \right\| \leq \hat{c}_{\ref{lem:generalized_compressible}} \sqrt{\left\lceil p - c_0 p\right\rceil} \right) \leq e^{\hat{c}_{\ref{lem:generalized_compressible}} \left\lceil p - c_0 p\right\rceil}.
\end{equation*}
Therefore, we have that 
\begin{align*}
    \mathbb{P}\left(\left\|M^{T} \Sigma (Mv_0 + y_0 ) + z_0 + \sigma v_0\right\| \leq (c_{\ref{lem:generalized_compressible}} + 26 K^{2} L^{2} c_1)n \right) 
    &\leq e^{-\hat{c}_{\ref{lem:generalized_compressible}}n } + e^{-\hat{c}_{\ref{lem:generalized_compressible}}\left\lceil p - c_0 p\right\rceil} \\
    &\leq 2 e^{-\hat{c}_{\ref{lem:generalized_compressible}}c_{\ref{matrix_model}}n / 2 },
\end{align*}
so long as \( c_{\ref{lem:generalized_compressible}} \) and \( c_1 \) are taken small enough such that
\[
    c_{\ref{lem:generalized_compressible}} + 26 K^{2} L^{2} c_1 \leq \hat{c}_{\ref{lem:generalized_compressible}}^{2} L^{-2} \sqrt{c_{\ref{matrix_model}} / 2},
\]
which is possible since \( \hat{c}_{\ref{lem:generalized_compressible}} \) does not depend on \( c_1 \) or \( c_{\ref{lem:generalized_compressible}} \).

Overall, incorporating the necessary union bounds, this net argument yields
\begin{align*}
&\PP \set{ \inf_{v \in \comp_p(c_0,c_1)} \norm{\hat{M}^{\mathrm{T}} \Sigma \hat{M} v - \sigma^2 v} \leq c_{\ref{lem:generalized_compressible}} n \wedge \mathcal{E}_K' } \\
&\leq |\mathcal{M}| |\mathcal{R}_{1}| |\mathcal{R}_{2}| \\
& \times \quad\sup_{\substack{ v_0 \in \mathcal{M} \\ z_0 \in \mathcal{R}_1, y_0 \in \mathcal{R}_2}} \mathbb{P} \left(\left\|M^{T} \Sigma (Mv_0 + y_0 ) + z_0 + \sigma v_0\right\| \leq (c_{\ref{lem:generalized_compressible}} + 26 K^{2} L^{2} c_1)n \right)\\
&\leq \left(\frac{9}{c_0 c_1}\right)^{c_0 n}  \left( \frac{\eta^2 L^2 K \sqrt{n}}{c_1} \right) \left( \frac{2 \eta}{c_1} \right) 2 e^{-\hat{c}_{\ref{lem:generalized_compressible}} c_{\ref{matrix_model}} n / 2}\\
&\leq 2e^{-c_{\ref{lem:generalized_compressible}}n},
\end{align*}
by taking \( c_0, c_{\ref{lem:generalized_compressible}} \) sufficiently small (potentially making \( c_{\ref{lem:generalized_compressible}} \) smaller than previously chosen) and using the fact that \(\eta \leq e^{o(n)}\) to control the sizes of \(\mathcal{R}_1\) and \(\mathcal{R}_2\).

The argument for equation \eqref{mod_compeq-n} is similar, again following the spirit of Lemma \ref{compcomp}, while additionally netting over possible \( y \) and \( z \) as appropriate.  As in the proof of Lemma \ref{compcomp}, this time we decompose \( M \) as \( \begin{pmatrix} X \\ Y \end{pmatrix} \) rather than \( \begin{pmatrix} X & Y \end{pmatrix} \).

\end{proof}

    We now sketch the use of this idea in the remaining sections of our argument.

    For the incompressible case with \(M + \eta \hat{y} \hat{z}^{\mathrm{T}}\), note that Lemma \ref{anti-concentration argument} remains effectively unchanged, as \(\eta \hat{y} \hat{z}^{\mathrm{T}} x\) and  \(\eta z \hat{y}^{\mathrm{T}}y\) again reside in one-dimensional spaces that we can efficiently net over. Hence, up to a slight change in constants,  Proposition \ref{approx sv} also remains unchanged. Finally, this implies that Lemma \ref{noincomp-smallLCD} can be modified so that the event in question is
    \begin{align*}
        &\big\{ \exists \text{ singular vector pair } (u,v) \colon (w,v) \in \incomp_n(c_0,c_1)  \times S_{D,p} \\
        &\hspace{80pt}\text{ or } (w,v) \in S_{D,\RR^n}  \times \incomp_p(c_0,c_1) \wedge \mathcal{E}_K \text{ for some } i = 2, \ldots, p \big\}.
    \end{align*}
    The restriction to \(i \neq 1\) guarantees that the singular value for which the above event occurs is less than \(LK\sqrt{n}\) (assuming \(\mathcal{E}_K' \)). This implies that 
    for all \(2 \leq i \leq p\), we may assume \(\widehat{\LCD}_{\kappa,\gamma}(v,\alpha), \widehat{\LCD}_{\kappa,\gamma}(w,\alpha) \geq n^{c/\alpha} \) whenever we are working on the event \(\mathcal{E}_K'\).
    with possibly different constants. Finally, for the main result, Theorem \ref{main1}, we have that the probability in \eqref{eq:main1.1} (without the supremum) can be bounded for \(i=2, \ldots, p-1\), and hence \eqref{eq:main1.2} also holds for the same values of \(i\). However, for  \(i = 1\), the gap is of size  \(LK\sqrt{n}\) by Weyl's inequality (assuming \(\mathcal{E}_K' \)), and so \eqref{eq:main1.2} holds for \(i = 1, \ldots, p-1\), as desired. As for Corollaries \ref{simple_singular_values} and \ref{cor:sing_value_diff}, their proofs follow by similarly splitting the problem into considering \(\sigma_1 - \sigma_2\) and \(\sigma_j - \sigma_{j+1}\) (\(j \geq 2\)) separately, using Weyl's inequality for the first quantity, and (the generalized form of) Theorem \ref{main1} for the second.
    \end{remark}

    \begin{remark} \label{rem:extension}
        Finally, we address the necessary adjustments to extend Assumption \ref{matrix_model} to $c_{\ref{matrix_model}} \leq \lambda \leq c_{\ref{matrix_model}}^{-1}$. 
        The gaps between the singular values of \( \Sigma^{1/2} M \) can be deduced from the gaps of the singular values of \( M^{\mathrm{T}} \Sigma^{1/2} \). However, in our argument we consider the eigenvalues of \( M^{\mathrm{T}} \Sigma M \), not \( \Sigma^{1/2} M^{\mathrm{T}} M \Sigma^{1/2} \). Therefore, the result follows analogously, with the minor modifications to proofs described below. Through the course of the paper, for a right singular vector \( v \) and left singular vector \( u \) of \( \Sigma^{1/2}M \), we often work with \( v \) and \( w = \Sigma^{1/2}u / \left\| \Sigma^{1/2} u\right\| \) rather than \( u \) directly, as it allowed us to leverage the independence of entries of \( M \). In the case where one considers \( M \Sigma^{1/2} \) in place of \( \Sigma^{1/2} M \), one can simply interchange the roles of \( u \) and \( v \). That is, for a singular vector pair \( (u,v) \), our proofs should be modified by replacing \( w \) with \( u \) and replacing \( v \) with \( \Sigma^{1/2} v / \left\| \Sigma^{1/2} v\right\| \). In doing so, one can replace the assumption \( c_{\ref{matrix_model}} \leq \lambda \leq 1 \) with \( c_{\ref{matrix_model}} \leq \lambda \leq c_{\ref{matrix_model}}^{-1} \).
    \end{remark}
\bibliographystyle{abbrv}
\bibliography{singularbib}

\begin{thebibliography}{10}

\bibitem{MR2536111}
G.~Akemann, M.~J. Phillips, and L.~Shifrin.
\newblock Gap probabilities in non-{H}ermitian random matrix theory.
\newblock {\em J. Math. Phys.}, 50(6):063504, 32, 2009.

\bibitem{MR3966534}
L.~Babai.
\newblock Group, graphs, algorithms: the graph isomorphism problem.
\newblock In {\em Proceedings of the {I}nternational {C}ongress of
  {M}athematicians---{R}io de {J}aneiro 2018. {V}ol. {IV}. {I}nvited lectures},
  pages 3319--3336. World Sci. Publ., Hackensack, NJ, 2018.

\bibitem{10.1145/800070.802206}
L.~Babai, D.~Y. Grigoryev, and D.~M. Mount.
\newblock Isomorphism of graphs with bounded eigenvalue multiplicity.
\newblock In {\em Proceedings of the fourteenth annual {ACM} symposium on
  {Theory} of computing}, {STOC} '82, pages 310--324, New York, NY, USA, May
  1982. Association for Computing Machinery.

\bibitem{Bai2010}
Z.~Bai and J.~W. Silverstein.
\newblock {\em Spectral analysis of large dimensional random matrices}.
\newblock Springer Series in Statistics. Springer, New York, second edition,
  2010.

\bibitem{BaiYin}
Z.~D. Bai and Y.~Q. Yin.
\newblock Limit of the smallest eigenvalue of a large-dimensional sample
  covariance matrix.
\newblock {\em Ann. Probab.}, 21(3):1275--1294, 1993.

\bibitem{MR3741950}
K.~E. Bassler and R.~K.~P. Zia.
\newblock Emergence of a spectral gap in a class of random matrices associated
  with split graphs.
\newblock {\em J. Phys. A}, 51(1):014002, 14, 2018.

\bibitem{MR3112927}
G.~Ben~Arous and P.~Bourgade.
\newblock Extreme gaps between eigenvalues of random matrices.
\newblock {\em Ann. Probab.}, 41(4):2648--2681, 2013.

\bibitem{MR1477662}
R.~Bhatia.
\newblock {\em Matrix analysis}, volume 169 of {\em Graduate Texts in
  Mathematics}.
\newblock Springer-Verlag, New York, 1997.

\bibitem{MR4416591}
P.~Bourgade.
\newblock Extreme gaps between eigenvalues of {W}igner matrices.
\newblock {\em J. Eur. Math. Soc. (JEMS)}, 24(8):2823--2873, 2022.

\bibitem{MR3695802}
P.~Bourgade, L.~Erd\H{o}s, H.-T. Yau, and J.~Yin.
\newblock Universality for a class of random band matrices.
\newblock {\em Adv. Theor. Math. Phys.}, 21(3):739--800, 2017.

\bibitem{MR4009717}
Z.~Che and B.~Landon.
\newblock Local spectral statistics of the addition of random matrices.
\newblock {\em Probab. Theory Related Fields}, 175(1-2):579--654, 2019.

\bibitem{MR2587574}
L.~Erd\H{o}s, B.~Schlein, and H.-T. Yau.
\newblock Wegner estimate and level repulsion for {W}igner random matrices.
\newblock {\em Int. Math. Res. Not. IMRN}, 2010(3):436--479, 2010.

\bibitem{MR2917064}
L.~Erd\H{o}s and H.-T. Yau.
\newblock Universality of local spectral statistics of random matrices.
\newblock {\em Bull. Amer. Math. Soc. (N.S.)}, 49(3):377--414, 2012.

\bibitem{erdos2014gapuniversalitygeneralizedwigner}
L.~Erd\H{o}s and H.-T. Yau.
\newblock Gap universality of generalized {W}igner and {$\beta$}-ensembles.
\newblock {\em J. Eur. Math. Soc. (JEMS)}, 17(8):1927--2036, 2015.

\bibitem{MR3699468}
L.~Erd\H{o}s and H.-T. Yau.
\newblock {\em A dynamical approach to random matrix theory}, volume~28 of {\em
  Courant Lecture Notes in Mathematics}.
\newblock Courant Institute of Mathematical Sciences, New York; American
  Mathematical Society, Providence, RI, 2017.

\bibitem{ge2017eigenvaluespacingiid}
S.~C. Ge.
\newblock {\em The {E}igenvalue {S}pacing of {IID} {R}andom {M}atrices and
  {R}elated {L}east {S}ingular {V}alue {R}esults}.
\newblock ProQuest LLC, Ann Arbor, MI, 2017.
\newblock Thesis (Ph.D.)--University of California, Los Angeles.

\bibitem{twitter_power_method}
P.~Gupta, A.~Goel, J.~Lin, A.~Sharma, D.~Wang, and R.~Zadeh.
\newblock Wtf: the who to follow service at twitter.
\newblock In {\em Proceedings of the 22nd International Conference on World
  Wide Web}, WWW '13, pages 505--514, New York, NY, USA, 2013. Association for
  Computing Machinery.

\bibitem{han2024smallballprobabilitymultiple}
Y.~Han.
\newblock Small ball probability for multiple singular values of symmetric
  random matrices, 2024.

\bibitem{han2025singularvalues}
Y.~Han.
\newblock Simplicity of singular value spectrum of random matrices and
  two-point quantitative invertibility.
\newblock {\em arXiv preprint arXiv:2502.13819}, 2025.

\bibitem{Hopcroft1972}
J.~E. Hopcroft and R.~E. Tarjan.
\newblock {\em Isomorphism of Planar Graphs (Working Paper)}, pages 131--152.
\newblock Springer US, 1972.

\bibitem{google_page_rank}
I.~Ipsen.
\newblock Analysis and computation of google's pagerank.
\newblock Talk given at Seventh IMACS International Symposium on Iterative
  Methods in Scientific Computing, 05 2005.

\bibitem{Klus_spectral_assignment_graph_iso}
S.~Klus and T.~Sahai.
\newblock {A spectral assignment approach for the graph isomorphism problem}.
\newblock {\em Information and Inference: A Journal of the IMA}, 7(4):689--706,
  02 2018.

\bibitem{8806332}
M.~B. Koca and F.~E. Sevilgen.
\newblock A novel approach for subgraph isomorphism problem on bipartite
  graphs.
\newblock In {\em 2019 27th Signal Processing and Communications Applications
  Conference (SIU)}, pages 1--4, 2019.

\bibitem{KUHL2024109022}
N.~K{\"u}hl, H.~Fischer, M.~Hinze, and T.~Rung.
\newblock An incremental singular value decomposition approach for large-scale
  spatially parallel \& distributed but temporally serial data -- applied to
  technical flows.
\newblock {\em Computer Physics Communications}, 296:109022, 2024.

\bibitem{lehoucq1998arpack}
R.~B. Lehoucq, D.~C. Sorensen, and C.~Yang.
\newblock {\em A{RPACK} users' guide}, volume~6 of {\em Software, Environments,
  and Tools}.
\newblock Society for Industrial and Applied Mathematics (SIAM), Philadelphia,
  PA, 1998.
\newblock Solution of large-scale eigenvalue problems with implicitly restarted
  Arnoldi methods.

\bibitem{leighton1979certificates}
F.~T. Leighton and G.~Miller.
\newblock Certificates for graphs with distinct eigenvalues.
\newblock {\em Original Manuscript}, 1979.

\bibitem{pmlr-v139-li21u}
X.~Li, S.~Wang, K.~Chen, and Z.~Zhang.
\newblock Communication-efficient distributed svd via local power iterations.
\newblock In M.~Meila and T.~Zhang, editors, {\em Proceedings of the 38th
  International Conference on Machine Learning}, volume 139 of {\em Proceedings
  of Machine Learning Research}, pages 6504--6514. PMLR, Jul 2021.

\bibitem{MR4255145}
G.~V. Livshyts, K.~Tikhomirov, and R.~Vershynin.
\newblock The smallest singular value of inhomogeneous square random matrices.
\newblock {\em Ann. Probab.}, 49(3):1286--1309, 2021.

\bibitem{luh_lopatto_sparse_gaps}
P.~Lopatto and K.~Luh.
\newblock Tail bounds for gaps between eigenvalues of sparse random matrices.
\newblock {\em Electron. J. Probab.}, 26:Paper No. 130, 26, 2021.

\bibitem{luh2020eigenvectorscontrollabilitynonhermitianrandom}
K.~Luh and S.~O'Rourke.
\newblock Eigenvectors and controllability of non-{H}ermitian random matrices
  and directed graphs.
\newblock {\em Electron. J. Probab.}, 26:Paper No. 16, 43, 2021.

\bibitem{luhvusparse}
K.~Luh and V.~Vu.
\newblock Sparse random matrices have simple spectrum.
\newblock {\em Ann. Inst. Henri Poincar\'e{} Probab. Stat.}, 56(4):2307--2328,
  2020.

\bibitem{Luks1982}
E.~M. Luks.
\newblock Isomorphism of graphs of bounded valence can be tested in polynomial
  time.
\newblock {\em J. Comput. System Sci.}, 25(1):42--65, 1982.

\bibitem{MR2868046}
A.~Maltsev and B.~Schlein.
\newblock A {W}egner estimate for {W}igner matrices.
\newblock In {\em Entropy and the quantum {II}}, volume 552 of {\em Contemp.
  Math.}, pages 145--160. Amer. Math. Soc., Providence, RI, 2011.

\bibitem{MehtaRMT}
M.~L. Mehta.
\newblock {\em Random matrices}, volume 142 of {\em Pure and {Applied}
  {Mathematics} ({Amsterdam})}.
\newblock Elsevier/Academic Press, Amsterdam, third edition, 2004.

\bibitem{Musco_Rand_Blk_Krylov_SVD_Power_Method}
C.~Musco and C.~Musco.
\newblock Randomized block krylov methods for stronger and faster approximate
  singular value decomposition.
\newblock In {\em Proceedings of the 28th International Conference on Neural
  Information Processing Systems - Volume 1}, NIPS'15, page 1396–1404,
  Cambridge, MA, USA, 2015. MIT Press.

\bibitem{nguyentaovugaps}
H.~Nguyen, T.~Tao, and V.~Vu.
\newblock Random matrices: tail bounds for gaps between eigenvalues.
\newblock {\em Probab. Theory Related Fields}, 167(3-4):777--816, 2017.

\bibitem{Peng2022_Graph_Convol_Paradigm}
S.~Peng, K.~Sugiyama, and T.~Mine.
\newblock Svd-gcn: A simplified graph convolution paradigm for recommendation.
\newblock In {\em Proceedings of the 31st ACM International Conference on
  Information \& Knowledge Management}, CIKM '22, page 1625–1634, New York,
  NY, USA, 2022. Association for Computing Machinery.

\bibitem{MR3231045}
A.~Perret and G.~Schehr.
\newblock Near-extreme eigenvalues and the first gap of {H}ermitian random
  matrices.
\newblock {\em J. Stat. Phys.}, 156(5):843--876, 2014.

\bibitem{rudelson2008littlewoodoffordprobleminvertibilityrandom}
M.~Rudelson and R.~Vershynin.
\newblock The {L}ittlewood-{O}fford problem and invertibility of random
  matrices.
\newblock {\em Adv. Math.}, 218(2):600--633, 2008.

\bibitem{rudelson2009smallestsingularvaluerandom}
M.~Rudelson and R.~Vershynin.
\newblock Smallest singular value of a random rectangular matrix.
\newblock {\em Comm. Pure Appl. Math.}, 62(12):1707--1739, 2009.

\bibitem{tao2012asymptoticdistributionsingleeigenvalue}
T.~Tao.
\newblock The asymptotic distribution of a single eigenvalue gap of a {W}igner
  matrix.
\newblock {\em Probab. Theory Related Fields}, 157(1-2):81--106, 2013.

\bibitem{tao2010randommatricesuniversalitylocal}
T.~Tao and V.~Vu.
\newblock Random matrices: universality of local eigenvalue statistics.
\newblock {\em Acta Math.}, 206(1):127--204, 2011.

\bibitem{MR2851058}
T.~Tao and V.~Vu.
\newblock The {W}igner-{D}yson-{M}ehta bulk universality conjecture for
  {W}igner matrices.
\newblock {\em Electron. J. Probab.}, 16:no. 77, 2104--2121, 2011.

\bibitem{tao2014randommatricessimplespectrum}
T.~Tao and V.~Vu.
\newblock Random matrices have simple spectrum.
\newblock {\em Combinatorica}, 37(3):539--553, 2017.

\bibitem{tikhomirov2020singularity}
K.~Tikhomirov.
\newblock Singularity of random bernoulli matrices.
\newblock {\em Annals of Mathematics}, 191(2):593--634, 2020.

\bibitem{UEHARA2005479}
R.~Uehara, S.~Toda, and T.~Nagoya.
\newblock Graph isomorphism completeness for chordal bipartite graphs and
  strongly chordal graphs.
\newblock {\em Discrete Appl. Math.}, 145(3):479--482, 2005.

\bibitem{vershynin2012nonasymptotic}
R.~Vershynin.
\newblock Introduction to the non-asymptotic analysis of random matrices.
\newblock In {\em Compressed sensing}, pages 210--268. Cambridge Univ. Press,
  Cambridge, 2012.

\bibitem{VershyninInvert}
R.~Vershynin.
\newblock Invertibility of symmetric random matrices.
\newblock {\em Random Structures Algorithms}, 44(2):135--182, 2014.

\bibitem{vershynin-hdp}
R.~Vershynin.
\newblock {\em High-dimensional probability}, volume~47 of {\em Cambridge
  Series in Statistical and Probabilistic Mathematics}.
\newblock Cambridge University Press, Cambridge, 2018.
\newblock An introduction with applications in data science, With a foreword by
  Sara van de Geer.

\bibitem{vu2020recentprogresscombinatorialrandom}
V.~H. Vu.
\newblock Recent progress in combinatorial random matrix theory.
\newblock {\em Probab. Surv.}, 18:179--200, 2021.

\bibitem{wignersurmise}
E.~P. Wigner.
\newblock Oak ridge national laboratory report, {ORNL}-2309, p. 59.
\newblock unpublished, 1957.

\bibitem{Yuan_RB_Lanczos}
Q.~Yuan, M.~Gu, and B.~Li.
\newblock Superlinear convergence of randomized block lanczos algorithm.
\newblock In {\em 2018 IEEE International Conference on Data Mining (ICDM)},
  pages 1404--1409, 2018.

\bibitem{Zhang2020_EIF_Analysis_SVD}
H.~Zhang, Y.~Chen, and X.~Li.
\newblock A note on exploratory item factor analysis by singular value
  decomposition.
\newblock {\em Psychometrika}, 85(2):358--372, 2020.

\end{thebibliography}

\end{document}